\documentclass[11pt,reqno]{amsart}

\usepackage{amsmath,amssymb,mathtools,mathrsfs}
\usepackage{microtype}
\usepackage{enumitem}
\usepackage{booktabs}
\usepackage{hyperref}
\usepackage[nameinlink,capitalise,noabbrev]{cleveref}

\allowdisplaybreaks
\numberwithin{equation}{section}

\newtheorem{theorem}{Theorem}[section]
\newtheorem{lemma}[theorem]{Lemma}
\newtheorem{proposition}[theorem]{Proposition}

\theoremstyle{remark}

\newtheorem*{remark*}{Remark}

\newcommand{\e}{\mathrm e}
\newcommand{\N}{\mathbb N}
\newcommand{\Z}{\mathbb Z}

\newcommand{\ord}{\operatorname{ord}}

\newcommand{\eps}{\varepsilon}
\newcommand{\Sscr}{\mathfrak S}
\newcommand{\calN}{\mathcal N}
\newcommand{\calZ}{\mathcal Z}
\newcommand{\Ssq}{\mathcal S_2}
\DeclareMathAlphabet{\mathbblowercase}{U}{bbold}{m}{n}

\def\pmod #1{\,({\rm mod}\ #1)}

\makeatletter
\@namedef{subjclassname@2020}{\textup{2020} Mathematics Subject Classification}
\makeatother

\title[Linnik's microsquare problem for almost all integers]{The optimal scale in Linnik's microsquare problem for almost all integers}

\author
[Y. Sun and L. Zhao] 
{ Yu-Chen Sun and Lilu Zhao}

\address{(Yu-Chen Sun) School of Mathematics, University of Bristol
Bristol, BS8 1UG, England}
\email{yuchensun93@163.com}
\address{(Lilu Zhao) School of Mathematical Science, University of Science and Technology of China, Hefei 250100, People's Republic of China}
\email{zhaolilu@ustc.edu.cn}

\date{}
\subjclass[2020]{Primary 11E25; Secondary 11P05, 11N37,
11N25.}
\keywords{Sums of three squares, Hooley's weight, Green's W-trick}

\begin{document}
\begin{abstract}
We prove that, for any positive function $\Psi(t)\to\infty$, almost all positive integers $n$ satisfying $4\nmid n$ and $n\not\equiv7\pmod8$ can be written as $n=x^2+y^2+z^2$, where $x,y,z\in\mathbb{Z}$ and $1\leqslant z<\sqrt{\log n}\,\Psi(n)$. This improves Wooley's bound, and we show that the result is best possible. Our approach combines Hooley's weighted representation function for sums of two squares with Green's $W$-trick. We also give a necessary and sufficient condition, in terms of covering systems, for almost all odd integers to be represented as the sum of a squarefree number and a power of two, answering a question of Granville and Soundararajan.
\end{abstract}
\maketitle

\section{Introduction}

The classical theorem on three squares states that a positive integer is a sum of
three integral squares if and only if it is not of the form $4^a(8b+7)$,
where $a,b$ are nonnegative integers.
We call a positive integer $n$ admissible if
\begin{equation}\label{eq:localcongruence}
4\nmid n,\qquad n\not\equiv7\pmod8.
\end{equation}
\emph{Linnik's microsquare conjecture}~\cite{Linnik,Wooley} asks whether, for every $\varepsilon>0$,
every sufficiently large squarefree odd admissible integer has a representation
$n=x^2+y^2+z^2$ with $|z|\leqslant n^\varepsilon$.

The equidistribution results of Duke and Schulze-Pillot~\cite{DukeSchulzePillot}
and Golubeva and Fomenko~\cite{GolubevaFomenkoEquidistribution} imply that,
for every fixed $\eta>0$, every sufficiently large squarefree odd admissible
integer has such a representation with $|z|<\eta\sqrt n$.
Golubeva and Fomenko~\cite{GolubevaFomenko} subsequently proved that,
for every $\varepsilon>0$, every sufficiently large admissible integer $n$ has
such a representation with $|z|\ll_\varepsilon n^{1/2-7/705+\varepsilon}$.
For squarefree $n\equiv3\pmod8$, Humphries and
Radziwi{\l}{\l}~\cite{HumphriesRadziwill} improved this to
$|z|\leqslant n^{4/9+\varepsilon}$, and to $|z|\leqslant n^{1/3+\varepsilon}$
under the generalised Lindel\"of hypothesis.
These bounds hold for every sufficiently large $n$ in the stated ranges.
More recently, assuming the Lindel\"of hypothesis for standard $L$ functions on
$\mathrm{GL}(2)/\mathbb Q$, Burrin and
Gr\"obner~\cite{BurrinGrobner} extended the conditional bound
$|z|\ll_\varepsilon n^{1/3+\varepsilon}$ to all admissible $n$,
with primitive representations.

On the other hand, Wooley~\cite{Wooley} established Linnik's conjecture for
almost all admissible integers.
\begin{theorem}[Wooley \cite{Wooley}]\label{thm-Wooley}
Let $\varepsilon>0$. Almost all positive integers $n$ satisfying
\eqref{eq:localcongruence} can be written as $n=x^2+y^2+z^2$, where
\[
0<z<(\log n)(\log\log n)^{2+\varepsilon},\qquad x,y,z\in\Z.
\]
\end{theorem}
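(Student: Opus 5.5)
We argue by a second-moment (variance) estimate. Fix $N$ large and let $H=(\log N)(\log\log N)^{2+\varepsilon}$. For $N<n\leqslant 2N$ put
\[
R(n)=\sum_{1\leqslant z\leqslant H} r(n-z^2),
\]
where $r(m)$ is the number of representations of $m$ as a sum of two squares. Since $r\geqslant0$, it suffices to show that $R(n)>0$ for all but $o(N)$ admissible $n\in(N,2N]$. To do this we compare $R(n)$ with a model $M(n)$, obtained by approximating $r(m)$ by the truncated singular series
\[
r(m)\approx \pi\sum_{q\leqslant Q}\sum_{a\bmod q}^{*} S(q,a)^2q^{-2}e(-am/q),
\]
or equivalently by the divisor expansion $r(m)=4\sum_{d\mid m}\chi_{-4}(d)$ truncated at $d\leqslant D$. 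We then aim to show two things: first, that $M(n)\gg H/(\log\log H)^{O(1)}$ (in fact $\gg H\,\mathfrak S(n)$) for admissible $n$ outside a small exceptional set; and second, that
\[
\sum_{N<n\leqslant 2N}|R(n)-M(n)|^2=o\Bigl(N H^2(\log\log N)^{-c}\Bigr).
\]
Chebyshev's inequality then gives the theorem.

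\textbf{Main term.} We have $M(n)=\sum_{z\leqslant H}\rho(n-z^2)$, where $\rho(m)=\prod_{p}\delta_p(m)$ is the product of local densities. Averaging over $z$ recovers the three-squares singular series $\mathfrak S_3(n)$, which is $\gg (\log\log n)^{-1}$ for admissible $n$; here the conditions $4\nmid n$ and $n\not\equiv7\pmod8$ guarantee that the $2$-adic factor is nonzero, with positive proportion of $z$. The problem is that the average over $z\leqslant H$ of $\rho(n-z^2)$ must equidistribute modulo the relevant primes $p\equiv3\pmod4$. Primes $p>H$ are harmless, because for them $\delta_p\approx 1-O(1/p)$ except when $p\mid n-z^2$ to an odd power. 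For small $p$ we use complete character sums. This part should be routine, at least after discarding $n$ with unusually many small prime factors.

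\textbf{Variance.} Expand
\[
\sum_n|R(n)-M(n)|^2=\sum_{z_1,z_2\leqslant H}\sum_n E(n-z_1^2)E(n-z_2^2),
\]
where $E=r-\rho$ is the error function. The diagonal terms $z_1=z_2$ contribute $\sum_n E(n)^2\cdot H\asymp NH$, since $\sum_{m\leqslant N} r(m)^2\asymp N\log N$. This is acceptable precisely because $H\gg\log N$: compared with $NH^2$, the diagonal gives a saving of a factor $\log N/H=(\log\log N)^{-2-\varepsilon}$. That is exactly where the exponent $2+\varepsilon$ should come from, once we account for the losses from the lower bound on $\mathfrak S_3$ and from the tail of the truncation. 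For the off-diagonal terms $z_1\ne z_2$ we need the shifted convolution asymptotic
\[
\sum_{m\leqslant N}r(m)r(m+h)=\text{(singular series)}\cdot N+O(N^{1-\delta})
\]
uniformly for $h=z_2^2-z_1^2\ll H^2$. This is classical, via Estermann/Kloosterman or via the spectral theory of theta functions. We also need the analogous asymptotics for the mixed terms $r\rho$ and $\rho\rho$, which are elementary. The singular series then cancel exactly, so the off-diagonal contribution is $O(H^2N^{1-\delta})$ plus truncation errors of size roughly $H^2N/Q$.

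\textbf{Main obstacle.} The delicate step is making the model $\rho$ accurate enough that the truncation errors in the off-diagonal sum are smaller than $NH^2(\log\log N)^{-c}$. At the same time the model must still have a diagonal contribution of order only $N\log N\cdot H$. Truncating the divisor sum at $D$ leaves tail correlations of size about $H^2N\log(N)/D$ on average. We would first try taking $Q=D=N^{\delta'}$ and check that the shifted-sum asymptotics with the truncated model cancel to error $O(N^{1-\delta''})$ uniformly in $h\leqslant H^2$. Should that fail, we would fall back on Hooley's weighted version of $r$, restricted to $m$ with few prime factors $\equiv3\pmod4$ in small ranges.
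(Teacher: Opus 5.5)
The paper gives no proof of this statement. It is quoted from Wooley, and it is also the special case $\Psi(n)=\sqrt{\log n}\,(\log\log n)^{2+\varepsilon}$ of Theorem~\ref{thm:main}.

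\textbf{How the two routes differ.} The paper proves Theorem~\ref{thm:main} with Hooley's weighted count $\rho_v=r_2t_v$. It works on $W$-trick branches, where $n$ and $z$ are fixed modulo $W$ so that $(n-z^2,W)=1$, and compares $R(n)$ with the \emph{constant} mean $\mu_0$. The weight cuts the diagonal to $\ll_W XY/\sqrt{\log X}$, and the $W$-trick makes the averaged singular series $M_W=1+O(w^{-1})$. So no lower bound for any singular series is ever needed, and the optimal $\sqrt{\log n}$ scale is reached.

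Your route is an unweighted dispersion argument against an $n$-dependent local model, and it is genuinely different. It needs only Estermann-type shifted convolution asymptotics and Gauss sums. Your bookkeeping correctly explains the shape of Wooley's bound: $\sum r(m)^2\asymp N\log N$ forces $H\gg\log N$, and the lower bound on $\mathfrak S_3$ costs a squared factor. For the same reason this method cannot go below $H\approx\log N$; that is exactly what the Hooley weight buys.

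\textbf{Repairs needed as written.}

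(i) The diagonal is $\asymp NH\log N$, not $NH$. Your subsequent comparison does use the correct size.

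(ii) The divisor truncation is not equivalent to the Gauss-sum truncation, and with it the argument fails. For $m\equiv1\pmod4$ the divisors $d$ and $m/d$ contribute equally. Hence $4\sum_{d\mid m,\,d\le D}\chi_{-4}(d)$ with $D\le N^{1/2-\delta}$ has mean $\pi$ on both odd classes modulo $4$. By contrast, $r$ has mean $2\pi$ on $m\equiv1$ and $0$ on $m\equiv3\pmod4$. Its correlations therefore do not cancel those of $r$, and the claimed tail of size $H^2N\log N/D$ is wrong. With $\rho_Q(m)=\pi\sum_{q\le Q}q^{-2}\sum_a^*S(q,a)^2e(-am/q)$ instead, the off-diagonal main terms cancel up to $N\sum_{q>Q}|c_q(h)|q^{-2}\ll N\tau(|h|)/Q$ (cf.\ \eqref{eq:Ramanujan-bounds}), with no factor $\log N$.

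(iii) Point (ii) removes your ``main obstacle'', but you should take $Q\le H^{1/2}$, not $Q=N^{\delta'}$. Only for $q\le H$ can $\sum_{z\le H}e(az^2/q)$ be completed, giving $M(n)=\pi H\sum_{q\le Q}A_q(n)+O(Q^{3/2+\varepsilon})$. For $H<q\le N^{\delta'}$ your claimed main term is unjustified. That range must be moved into the error, where its mean square over $n$ is of the same order as the diagonal.

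(iv) The bound $\mathfrak S_3(n)\gg(\log\log n)^{-1}$ is not known unconditionally for \emph{every} admissible $n$: it amounts to a GRH-strength lower bound for $L(1,\chi_{-4n})$. What you actually need is a lower bound for the truncated series $\sum_{q\le Q}A_q(n)$ outside $o(N)$ exceptions. This follows by comparing it in mean square over $n$ with $\prod_{p\le y}\sigma_p(n)\gg1/\log y$. The difference has mean square $\ll y^{-1}+Q^{-1}$, after discarding $n$ divisible by a high power of a small prime.

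With these changes your outline does prove the statement.
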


The density of sums of two squares suggests a smaller scale.
Landau~\cite{Landau} proved that
\begin{equation*}
 \#\{m\leqslant x:m=a^2+b^2\text{ for some }a,b\in\Z\}
 \sim A\frac{x}{\sqrt{\log x}},
\end{equation*}
where $A$ is the Landau--Ramanujan constant,
\begin{equation}
 A=\frac1{\sqrt2}\prod_{p\equiv3\pmod{4}}\left(1-\frac1{p^2}\right)^{-1/2}.
 \label{eq:LRconstant}
\end{equation}
Summing over the possible values of $z$ gives
\[
\#\{n\leqslant X:n=x^2+y^2+z^2,\ x,y,z\in\Z,\ 1\leqslant z\leqslant Y\}
\ll \frac{XY}{\sqrt{\log X}}.
\]
Thus, if $Y=o(\sqrt{\log X})$, only $o(X)$ integers can have such a
representation. Our results describe what happens at the critical scale
$\sqrt{\log n}$.

We improve Wooley's bound by proving the following theorem.
\begin{theorem}\label{thm:main}
Let $\Psi:[1,+\infty)\to [1,+\infty)$ satisfy $\Psi(n)\to+\infty$ as $n\to+\infty$. Apart from $o(X)$ exceptions, every integer $n$ satisfying
\[
 1\leqslant n\leqslant X,\qquad 4\nmid n,\qquad n\not\equiv7\pmod8
\]
can be written as $n=x^2+y^2+z^2$, where $0<z<\sqrt{\log n}\,\Psi(n)$ and  $x,y,z\in\Z$.
\end{theorem}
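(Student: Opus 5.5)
The plan is a second-moment (Cauchy--Schwarz) argument at the critical scale. Let $Y=\sqrt{\log X}\,\Psi(X)$; replacing $\Psi$ by a smaller function tending to infinity slowly is harmless, so I assume $\Psi(X)\leqslant\log\log X$, say. Let $\Ssq$ be the set of sums of two squares. I would take a nonnegative weight $\nu$, supported on $\Ssq$, of Hooley type: $\nu(m)=\mathbf 1_{\Ssq}(m)\,\lambda(m)$, where $\lambda(m)$ is a short divisor sum $\sum_{d\mid m,\,d\leqslant X^{\theta}}\lambda_d$ over $d$ composed of primes $p\equiv3\pmod4$. The $\lambda_d$ are chosen so that $\nu$ has mean of order $1$ on integers of size $X$; this mimics $\sqrt{\log m}\,\mathbf 1_{\Ssq}(m)$ but is easier to correlate. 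For admissible $n\leqslant X$ put
\[
R(n)=\sum_{\substack{1\leqslant z\leqslant Y\\ z\equiv b_n\pmod W}}\nu(n-z^2),
\]
where $W=\prod_{p\leqslant w}p$ and $w=w(X)\to\infty$ slowly (Green's $W$-trick). The class $b_n$, or more generally a weighting of residue classes depending on $n\bmod W$, is chosen so that for small primes $p$ the numbers $n-z^2$ have the generic $p$-adic behaviour, in particular not being forced into bad classes at $2$ or at small $p\equiv3\pmod4$. Admissibility of $n$ is exactly what guarantees that such classes exist with positive local density. Since $R(n)>0$ forces a representation with $0<z<\sqrt{\log n}\,\Psi(n)$ (up to the usual adjustment between $X$ and $n$ over dyadic ranges), it suffices to show
\[
\sum_{n}\bigl(R(n)-\mathfrak M(n)\bigr)^2=o\Bigl(\sum_n\mathfrak M(n)^2\Bigr),
\]
where $\mathfrak M(n)\asymp Y/(W\sqrt{\log X})\cdot(\text{local factor})$ is the expected value. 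Chebyshev's inequality then gives $R(n)>0$ for all but $o(X)$ admissible $n$.

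\emph{First moment.} $\sum_n R(n)$ is a sum over $z$ of averages of $\nu$ over $n$ in fixed residue classes modulo $W$ and modulo $4$ or $8$. Expanding $\lambda$ reduces this to counting $m\in\Ssq$ in progressions $m\equiv0\pmod d$ intersected with classes modulo $W$. Landau-type asymptotics in progressions, from the Selberg--Delange method, give this with the main term $\mathfrak M(n)$. Here the local density at primes $p\equiv3\pmod4$, $p\leqslant w$, is computed explicitly, and the primes $p>w$ contribute a factor $1+o(1)$.

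\emph{Second moment.} The square $\sum_n R(n)^2$ splits into diagonal terms $z_1=z_2$ and off-diagonal terms. The diagonal contributes about $X\cdot(Y/W)\cdot\mathbb E[\nu^2]\ll XY/(W\sqrt{\log X})$ when $\nu$ is suitably normalized. By contrast, $\sum_n\mathfrak M(n)^2\asymp X\,(Y/(W\sqrt{\log X}))^2$. The ratio of the two is about $W\sqrt{\log X}/Y=W/\Psi$, which is $o(1)$ provided $w$ grows slowly enough relative to $\Psi$. This is exactly where the critical scale $\sqrt{\log n}$ enters. The off-diagonal terms require, uniformly for shifts $h=z_2^2-z_1^2$ with $0<|h|\leqslant Y^2\ll(\log X)^{3}$, an asymptotic
\[
\sum_{n\leqslant X}\nu(n)\nu(n+h)=\Sscr(h)\,X\,(1+o(1)).
\]
Since $\nu$ involves $\mathbf 1_{\Ssq}$ and not merely a divisor sum, this is the binary sums-of-two-squares correlation. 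I would handle it with Hooley's method: write $\mathbf 1_{\Ssq}$ through $r(m)$ and a divisor-sum expression, so that the correlation reduces to counts of $n$ in progressions, together with the Selberg--Delange asymptotics for shifted products.

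\emph{Main obstacle.} The main obstacle is the singular series $\Sscr(z_2^2-z_1^2)$. It is a product over primes $p\equiv3\pmod4$ dividing $h$, and separately at $2$, of factors that are not close to $1$ individually. I need its average over pairs $z_1\neq z_2$ in the chosen class modulo $W$ to be $1+o(1)$ after normalization by $\mathfrak M(n)^2$. The $W$-trick is designed to achieve this: once $z_1\equiv z_2\pmod W$, all primes $p\leqslant w$ divide $h$ in a controlled way that is absorbed into $\mathfrak M(n)$. The primes $p>w$ dividing $z_2^2-z_1^2$ then contribute $\prod_{p>w}(1+O(p^{-2}))$ on average over $z_1,z_2$, which is $1+o(1)$. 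Making this averaging and the uniformity in $h$ precise, with error terms beating $X Y^2/(W^2\log X)$, is the step I expect to be hardest.
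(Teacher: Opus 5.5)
Your framework ($W$-trick, a weight supported on $\Ssq$, Chebyshev, a diagonal of relative size $O_W(\sqrt{\log X}/Y)$, and an off-diagonal controlled by averaging a singular series over $z_1\neq z_2$) is the paper's. The gap is in your choice of weight, and therefore in the off-diagonal asymptotic. Take $\nu=\mathbf 1_{\Ssq}\cdot\lambda$ with $\lambda$ a short divisor sum over $d$ built from primes $p\equiv3\pmod4$. Expanding $\lambda$ only reduces $\sum_n\nu(n)\nu(n+h)$ to the sums
\[
\sum_{\substack{n\leqslant X\\ d_1\mid n,\ d_2\mid n+h}}\mathbf 1_{\Ssq}(n)\,\mathbf 1_{\Ssq}(n+h),
\]
and already the term $d_1=d_2=1$ counts $n\leqslant X$ with $n,n+h\in\Ssq$. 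No asymptotic formula is known for these counts for any fixed $h\neq0$; only bounds of the correct order $X/\log X$ are available. Hooley's method does not write $\mathbf 1_{\Ssq}$ as $r_2$ times a divisor sum. It \emph{replaces} $\mathbf 1_{\Ssq}$ by a different function with the same support. The Selberg--Delange method treats a single multiplicative function in progressions, not shifted products. So the asymptotic $\sum_n\nu(n)\nu(n+h)=\Sscr(h)X(1+o(1))$, on which your whole off-diagonal analysis rests, is not available for your $\nu$.

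The paper instead uses $\rho_v=r_2\,t_v$ with $t_v$ as in \eqref{eq:tv}. This is a short divisor sum over $a$ composed of primes $p\equiv1\pmod4$, with coefficients $\mu(a)/g_2(a)$ and a logarithmic taper. It damps the $2^{\omega}$-type fluctuations of $r_2$, so that $\rho_v$ has first and second moments of the same order as $\mathbf 1_{\Ssq}$ (Lemmas~\ref{lemma-firstmoment} and \ref{lemma-secondmoment}). At the same time it remains a combination of $r_2$ restricted to progressions. This weight changes sign, and only the support property \eqref{eq:support-implication} is used, so your insistence on nonnegativity is unnecessary. Expanding $t_v$ turns the off-diagonal into sums $\sum r_2(m)r_2(m+h)$ with $c_1\mid m$, $c_2\mid m+h$ and $m$ in a reduced class modulo $W$. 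These are tractable because they count integer points on the quaternary quadric $c^2+d^2-a^2-b^2=h$, and Lemma~\ref{lem:shifted-r2} gives them with power savings. Hooley's evaluation of $D_{v,Q}(t)$ (Lemma~\ref{lemma:DvQt}) then converts the main term into $\Sscr_W(h)$. With that input in place, the step you flagged as the main obstacle is routine: Ramanujan sums and a Gauss sum evaluation (Lemmas~\ref{lem:quadratic-Ramanujan} and \ref{prop:mean-singular-series}) give the average $M_W=1+O(w^{-1})$. The real analytic input is the $r_2$ shifted-convolution asymptotic, and your weight cannot reach it.
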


The next theorem shows that Theorem~\ref{thm:main} is best possible.

For $C>0$, $a_0\in\{1,2,3,5,6\}$, and $X\geqslant3$, define
\[
\mathcal E_{C,a_0}(X)=
\left\{n\in\Z:\begin{array}{l}
X<n\leqslant2X,\quad n\equiv a_0\pmod8,\\
n\ne x^2+y^2+z^2\text{ for all }x,y,z\in\Z\\
\text{with }|z|\leqslant C\sqrt{\log n}
\end{array}\right\}.
\]
\begin{theorem}\label{thm:critical-exceptions}
For every fixed $C>0$, there exists $\delta_C>0$ depending only on $C$ such that,
for every $a_0\in\{1,2,3,5,6\}$ and all sufficiently large $X$,
\[
\#\mathcal E_{C,a_0}(X)\geqslant\delta_C X.
\]
\end{theorem}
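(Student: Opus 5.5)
The plan is to restrict $n$ to a single residue class modulus $W$, chosen so that the expected number of admissible $z$ drops below $1/2$, and then apply Markov's inequality. Write $Z=\lfloor C\sqrt{\log 2X}\rfloor$ and let $\Ssq$ denote the set of sums of two squares. For $n\in(X,2X]$ put
\[
N(n)=\#\{0\leqslant z\leqslant Z:\ n-z^2\in\Ssq\}.
\]
Then $n\in\mathcal E_{C,a_0}(X)$ exactly when $N(n)=0$ and $n\equiv a_0\pmod8$. The trivial count $\sum_n N(n)\ll CX$ recalled after Landau's theorem only rules out the case $C$ small, so the local structure of $n$ has to be used.

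Fix a parameter $w$, to be chosen large in terms of $C$. Let $P_w$ be the set of primes $p\equiv3\pmod4$ with $p\leqslant w$, and set $W=8\prod_{p\in P_w}p$. By the Chinese remainder theorem, choose a class $b\bmod W$ with $b\equiv a_0\pmod 8$ and $b$ a nonzero quadratic residue modulo every $p\in P_w$. This class has density $1/W>0$, which depends only on $C$ once $w$ is fixed.

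For $n\equiv b\pmod W$ and $p\in P_w$, the congruence $z^2\equiv n\pmod p$ has two roots. For $z$ in those two classes mod $p$, the integer $n-z^2$ is divisible by $p$, and it lies in $\Ssq$ only if $p^2\mid n-z^2$. For the other $z$, we have $p\nmid n-z^2$.

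The key analytic input is an upper-bound sieve of dimension $1/2$ (Selberg's or Brun's sieve, sieving by the primes $\equiv3\pmod4$ dividing to an odd power). Uniformly in $z\leqslant Z$, it should give
\[
\#\{n\in(X,2X]:\ n\equiv b\ (W),\ n-z^2\in\Ssq\}\ \ll\ \frac{X}{W\sqrt{\log X}}\prod_{p\in P_w}\lambda_p(b,z),
\]
with an absolute implied constant. Here $\lambda_p(b,z)$ is the local density of $\Ssq$ at $p$ among integers $\equiv b-z^2\pmod p$, normalised by the average density. It equals roughly $1+1/p$ when $p\nmid b-z^2$, and roughly $1/p$ when $p\mid b-z^2$. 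The $2$-adic factor is bounded and can be absorbed.

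Next, sum over $z\leqslant Z$. Since $Z\to\infty$ while $W$ is fixed, the $z$ equidistribute modulo $W$. The average of $\prod_p\lambda_p$ over residues $z\bmod W$ therefore factorises as $\prod_{p\in P_w}\bigl((1-\tfrac2p)(1+\tfrac1p)+\tfrac2p\cdot\tfrac1p\bigr)$ up to $1+O(1/p^2)$ factors, which equals $\prod_{p\in P_w}(1-\tfrac1p+O(p^{-2}))$. By Mertens' theorem in progressions this is $\asymp(\log w)^{-1/2}$. Hence
\[
\sum_{\substack{X<n\leqslant2X\\ n\equiv b\,(W)}}N(n)\ \ll\ \frac{X}{W}\cdot C\,(\log w)^{-1/2}.
\]
Choose $w$ so large that the right side is at most $X/(2W)$. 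Markov's inequality then shows that at least $X/(2W)-O(1)$ integers $n\equiv b\pmod W$ in $(X,2X]$ have $N(n)=0$. By symmetry in $z\mapsto -z$ these lie in $\mathcal E_{C,a_0}(X)$, so $\delta_C=1/(3W)$ works.

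The main obstacle is the uniform sieve bound. It must hold with an absolute constant that does not grow with $w$, and it must correctly capture the local factors at the primes in $P_w$ as well as the behaviour at primes above $w$. My first attempt would be the Selberg upper-bound sieve of dimension $\kappa=1/2$ applied to the sequence $\{n-z^2:\ n\equiv b\ (W)\}$, with sieving set the primes $p\equiv3\pmod4$, $w<p\leqslant X^{\eta}$. The primes in $P_w$ would be handled exactly through the congruence conditions and the requirement $p^2\mid n-z^2$. Shiu's Brun--Titchmarsh theorem for multiplicative functions would be a fallback.
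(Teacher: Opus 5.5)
Your argument is correct and has the same skeleton as the paper's proof. You restrict $n$ to a progression in which $n$ is a nonzero square modulo every small prime $p\equiv3\pmod4$, take a first moment over $z$, and note that the local average over $z$ is exactly $\frac2p\cdot\frac1p+(1-\frac2p)(1+\frac1p)=1-\frac1p$, which is the paper's $\Sigma_{p^2}(1)=1-\frac1p$. You then finish with Markov.

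The genuine difference is the analytic input. The paper takes the modulus $q=8\prod_{p\leqslant T_C,\,p\equiv3(4)}p^2$ with $a\equiv1\pmod Q$. It uses Prachar's asymptotic in a \emph{fixed} progression, extended to non-reduced classes by factoring out $D^2$ and powers of $2$ (Lemmas~\ref{lem:pd-odd} and~\ref{lem:pd-ap}). Each count is then $(\lambda_q(a-z^2)+o_q(1))AX/\sqrt{\log X}$ with the exact local factor. Because $q$ depends only on $C$, the $o_q(1)$ terms vanish as $X\to\infty$ and the threshold $\frac{3AC}{2}\prod(1-\frac1p)<\frac14$ is explicit. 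So the uniformity you identify as the main obstacle never arises.

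Your route needs only an upper bound, but one with an absolute constant that is uniform in $w$. Such a bound does hold:
for the $p\in P_w$ dividing $b-z^2$, write $m=D^2m'$, remove the power of $2$, and apply Shiu's theorem to $\mathbf 1_{\Ssq}$ on the resulting reduced progression modulo $W'$. This gives $\ll X/(D^2\phi(W')\sqrt{\log X})$ with an absolute constant. The inequality $\prod_{p}\frac{p}{p-1}\leqslant\zeta(2)\prod_p(1+\frac1p)$ then converts this into your stated bound.

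So your proof goes through, at the price of a heavier tool and an unspecified constant that $w$ must beat. What it buys is robustness: it would survive a modulus growing with $X$, where fixed-modulus asymptotics are unavailable. Since your $W$ is fixed, however, you could replace the sieve by Prachar's theorem directly. That turns your argument into the paper's, with $p$ in place of $p^2$.

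A minor point: $N(n)=0$ implies $n\in\mathcal E_{C,a_0}(X)$, but not conversely, since $Z$ may exceed $C\sqrt{\log n}$. Only the implication is needed.
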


\begin{remark*}
Work of Richards~\cite{Richards} and Dietmann, Elsholtz, Kalmynin,
Konyagin, and Maynard~\cite{DietmannEtAl}
on gaps between sums of two squares implies that, for every
sufficiently small fixed $C>0$, there are infinitely many positive integers
$n$ satisfying \eqref{eq:localcongruence} that admit no representation
$n=x^2+y^2+z^2$ with $x,y,z\in\Z$ and $|z|\leqslant C\sqrt{\log n}$.
\end{remark*}

Let us briefly discuss the idea of the proof.
The first key ingredient is the W-trick, introduced by Green~\cite{Green}.
In the spirit of localization in harmonic analysis, we partition the
admissible integers into residue classes modulo $W$, estimate the exceptional
proportion in each class, and then let the number of classes tend to infinity
with $W$. The second key ingredient is Hooley's weight~\cite{Hooley}.
The weighted representation count $R_W(\mathbf{n})$ has a second moment
close to the square of its mean~\cite{McGrath,KimmelKuperberg}.
Chebyshev's inequality gives, for every $t>0$,
\[
\mathbb{P}\!\left(\left|R_W(\mathbf{n})-\mathbb{E}R_W(\mathbf{n})\right|
\geqslant t\right)
\leqslant
\frac{\mathbb{E}\!\left(\big(R_W(\mathbf{n})-\mathbb{E}R_W(\mathbf{n})\big)^2\right)}{t^2}.
\]
Taking $t=\mathbb{E}R_W(\mathbf{n})>0$, we obtain
\[
\mathbb{P}\big(R_W(\mathbf{n})=0\big)
\leqslant
\frac{\mathbb{E}\big(R_W(\mathbf{n})^2\big)}
{\big(\mathbb{E}R_W(\mathbf{n})\big)^2}-1
\longrightarrow0\quad\text{as }W\to\infty,
\]
where $\mathbf{n}$ is chosen uniformly from the progression.


The argument for Theorem~\ref{thm:critical-exceptions} is different.
We use the distribution of sums of two squares in fixed arithmetic
progressions, based on a theorem of Prachar~\cite{Prachar}.
For a given $C$, we choose a progression whose averaged local factors make
the average number of shifts $n-z^2$ belonging to the set of sums of two
squares small. A first moment bound over
the permitted values of $z$ then leaves a positive proportion of integers
in that progression without a representation.

As a complementary result, we give one more application of Green's $W$-trick by considering representations of odd integers as
the sum of a squarefree number and a power of $2$. Motivated by a question
of Granville and Soundararajan~\cite{GS}, Theorem~\ref{thm:squarefree-covering}
characterizes the representation property for almost all odd integers in terms of covering
systems whose moduli are the multiplicative orders of $2$ modulo squares
of odd primes. Its proof also separates finite local obstructions before
estimating the remaining contribution.

The rest of the paper is organized as follows.
Section~\ref{sec:notation} introduces the weights and deduces
Theorem~\ref{thm:main} from Proposition~\ref{lem:variance}.
The required moment estimates are developed in Section~\ref{sec:weighted-moments},
and the proof of the proposition is completed in Section~\ref{sec:branch-moments}.
Section~\ref{sec:critical-exceptions} proves Theorem~\ref{thm:critical-exceptions}.
The final section contains the result on squarefree numbers.

\section{Notation and proof of Theorem~\ref{thm:main}}\label{sec:notation}

All notations introduced in this section will be used throughout this paper. Let $\eps>0$ be a sufficiently small constant, say $\eps<10^{-10}$. Let $X>0$ be sufficiently lage. The letter $p$ denotes a prime. We write
\[
 e(t)=\e^{2\pi i t},\qquad \chi=\chi_4,
\]
where $\chi$ is the nonprincipal real character modulo $4$. 

Fix $j_0\in \{1,2,3\}$, and let 
\begin{align}\label{eq:definej0lambda0}
\widetilde{j_0}=\begin{cases}0, \ \  & \ j_0\in\{1\}
\\ 1, \ \  &\ j_0\in\{2,3\}\end{cases}\ \  \textrm{and}\ \ \
\lambda_0=\begin{cases}1, \ \  & \ j_0\in\{1,2\}
\\ 2, \  \ &\ j_0=3\end{cases}.\end{align}

Let $w\geqslant 3$ be sufficiently large and put
\begin{equation}
 W=W(w)=\prod_{3\leqslant p\leqslant w}p.
 \label{eq:W-product}
\end{equation}

Write
\begin{equation}
 W=W_1W_3,
 \label{eq:W13}
\end{equation}
where every prime dividing $W_j$ is congruent to $j$ modulo $4$.

\begin{lemma}\label{lem:local-square}
For every residue class $c\pmod W$, there exists $\beta_c\pmod W$ such that
\begin{equation*}
 (c-\beta_c^2,W)=1.
\end{equation*}
\end{lemma}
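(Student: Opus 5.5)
Since $W$ is a product of distinct odd primes $3\leqslant p\leqslant w$, it is squarefree, so the plan is to work one prime at a time and then glue the local choices together with the Chinese remainder theorem. For a prime $p\mid W$ we have $(c-\beta^2,W)=1$ exactly when $p\nmid c-\beta^2$ for every such $p$. This condition depends only on $\beta\bmod p$. It therefore suffices to find, for each prime $p\mid W$, a residue $b_p\pmod p$ with
\[
b_p^2\not\equiv c\pmod p.
\]
Once these are in hand, the Chinese remainder theorem gives a single $\beta_c\pmod W$ with $\beta_c\equiv b_p\pmod p$ for all $p\mid W$. Then $p\nmid c-\beta_c^2$ for each $p\mid W$, hence $(c-\beta_c^2,W)=1$.

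The local step is a counting argument. Fix an odd prime $p$ and a residue $c\pmod p$. We count the $b\in\Z/p\Z$ with $b^2\equiv c\pmod p$.
\begin{itemize}
\item If $c\equiv0$, only $b\equiv0$ works, since $p$ is prime.
\item If $c\not\equiv0$, the congruence $x^2\equiv c$ has at most two roots modulo the prime $p$.
\end{itemize}
In either case at most two of the $p$ residues are bad. Since $p\geqslant3$, at least $p-2\geqslant1$ choices of $b_p$ remain. Concretely, one can take $b_p=1$ if $c\equiv0$, and $b_p=0$ if $c\not\equiv0$.

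There is no serious obstacle here. The only points needing care are that $2\nmid W$ and that $W$ is squarefree. The first is essential: modulo $2$ every residue is a square of something, and $c-\beta^2$ can be odd only if $\beta\not\equiv c\pmod 2$. That would still be fine, but the statement avoids the issue by excluding $2$ from $W$. The second lets the coprimality condition reduce to a condition modulo primes rather than prime powers.

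The simplest explicit construction is then to take $\beta_c\equiv0\pmod p$ for $p\nmid c$ and $\beta_c\equiv1\pmod p$ for $p\mid c$. With this choice $c-\beta_c^2\equiv c\not\equiv0$ in the first case and $\equiv-1\not\equiv0$ in the second, so the lemma follows.
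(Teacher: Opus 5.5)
Your proof is correct and takes the same approach as the paper: modulo each odd prime $p\mid W$ at most two residues $b$ satisfy $b^2\equiv c$, so a good local class exists, and the Chinese remainder theorem glues these choices together. Your explicit choice ($\beta_c\equiv0$ or $1\pmod p$ according as $p\nmid c$ or $p\mid c$) is a nice concrete touch, though your aside calling the exclusion of $2$ ``essential'' is contradicted by your own next sentence and is not needed.
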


\begin{proof}
For a fixed odd prime $p$, the congruence $b^2\equiv c\pmod p$ has at most two solutions. Hence at least one residue class $b\pmod p$ avoids it. Choose such a class for every $p\mid W$ and combine the choices by the Chinese remainder theorem.
\end{proof}

From now on we fix an arbitrary residue class $c\pmod W$, and choose $\beta_c\pmod W$ satisfying Lemma \ref{lem:local-square}.
Then we define $\mathcal{N}:=\mathcal{N}(X;c,W;j_0)$ and $\mathcal{Z}:=\mathcal{Z}(X;c,W;j_0)$ by 
\begin{align}\label{definecalN}
\mathcal{N}=\{X<n\leqslant 2X:\ n\equiv c\pmod{W},\ n\equiv j_0\pmod{4\lambda_0}\}
\end{align}
and
\begin{align}\label{eq:definecalZ}
\mathcal{Z}=\{1\leqslant z\leqslant Y:\ z\equiv \beta_c\pmod{W},\ z\equiv\widetilde{j_0}\pmod{2}\},\end{align}
where 
\begin{align}\label{sizeY} \sqrt{\log X}<Y<\log X.\end{align}
It is easy to see that
\begin{align}\label{sizeNandZ}
|\mathcal{N}|=\frac{X}{4\lambda_0W}+O(1) \quad \textrm{and} \quad 
|\mathcal{Z}|=\frac{Y}{2W}+O(1).
\end{align}
For $n\in \mathcal{N}$ and $z\in \mathcal{Z}$, we define 
\begin{equation}
 m_{n,z}=\frac{n-z^2}{\lambda_0}.
 \label{eq:mz}
\end{equation}

\begin{lemma}\label{lem:three-branches}Let $n\in \mathcal{N}$ and $z\in \mathcal{Z}$, and let $m_{n,z}$ be defined as in \eqref{eq:mz}. Then
\begin{equation*}
m_{n,z}\in\Z^{+},
 \qquad m_{n,z}\equiv1\pmod4,
 \qquad (m_{n,z},W)=1.
\end{equation*}
Moreover, if $m_{n,z}=a^2+b^2$, then $n=x^2+y^2+z^2$ for suitable integers $x,y$.
\end{lemma}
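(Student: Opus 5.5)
The proof is a direct verification, done one case of $j_0$ at a time, in three parts: positivity, the congruence modulo $4$, and coprimality with $W$. After that, the lifting statement follows from a one-line identity.

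\emph{Positivity.} Since $z\leqslant Y<\log X$ by \eqref{sizeY} and $n>X$, we have $n-z^2>X-(\log X)^2>0$ for large $X$. Hence $m_{n,z}>0$ once integrality is established.

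\emph{Integrality and the class modulo $4$.} We split according to \eqref{eq:definej0lambda0}.
\begin{itemize}
\item If $j_0=1$, then $\lambda_0=1$, $n\equiv1\pmod4$, and $z$ is even. So $z^2\equiv0\pmod4$ and $m_{n,z}=n-z^2\equiv1\pmod4$.
\item If $j_0=2$, then $\lambda_0=1$, $n\equiv2\pmod4$, and $z$ is odd. So $z^2\equiv1\pmod 4$ and $m_{n,z}\equiv1\pmod4$.
\item If $j_0=3$, then $\lambda_0=2$, $n\equiv3\pmod8$, and $z$ is odd. So $z^2\equiv1\pmod8$, giving $n-z^2\equiv2\pmod8$. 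Therefore $m_{n,z}=(n-z^2)/2$ is an integer with $m_{n,z}\equiv1\pmod4$.
\end{itemize}
Each case only uses that odd squares are $1\pmod 8$, and these residue conditions are exactly why $\mathcal N$ and $\mathcal Z$ were defined via $4\lambda_0$ and the parity $\widetilde{j_0}$.

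\emph{Coprimality with $W$.} Because $n\equiv c$ and $z\equiv\beta_c\pmod W$, we have $n-z^2\equiv c-\beta_c^2\pmod W$. By Lemma~\ref{lem:local-square}, $(n-z^2,W)=1$. Since $W$ is odd, dividing by $\lambda_0\in\{1,2\}$ does not affect this, so $(m_{n,z},W)=1$.

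\emph{Lifting.} If $m_{n,z}=a^2+b^2$ and $\lambda_0=1$, take $x=a$, $y=b$. If $\lambda_0=2$, use $2(a^2+b^2)=(a+b)^2+(a-b)^2$, giving $x=a+b$, $y=a-b$. Then $n=\lambda_0 m_{n,z}+z^2=x^2+y^2+z^2$.

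There is no real obstacle here. The only point needing care is the bookkeeping in the case $j_0=3$, where one must check that $n-z^2$ is exactly $2$ times an integer that is $1\pmod 4$.
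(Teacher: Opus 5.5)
Your proposal is correct and follows the same route as the paper's proof. Both check the class modulo $4$ from the residue conditions defining $\mathcal N$ and $\mathcal Z$, use Lemma~\ref{lem:local-square} for coprimality with $W$, lift via $2(a^2+b^2)=(a+b)^2+(a-b)^2$, and get positivity from \eqref{sizeY}. The only difference is that you also write out the cases $j_0=1,2$, which the paper leaves to the reader.
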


\begin{proof} The proof is easy. We only discuss the case $j_0=3$. Note that $\lambda_0=2$ and $\widetilde{j_0}=1$ in this case. Since $n-z^2\equiv j_0-1\equiv 2\pmod{8}$, we have $m_{n,z}\in \Z$ and $m_{n,z}\equiv 1\pmod{4}$. The conclusion $(m_{n,z},W)=1$ follows from Lemma \ref{lem:local-square}. 
We deduce from $m_{n,z}=a^2+b^2$ that $n=(a-b)^2+(a+b)^2+z^2$. Finally, by \eqref{sizeY}, $n-z^2>0$ and thus $m_{n,z}\in \Z^{+}$.
\end{proof}
For $m\geqslant 1$, let
\[
 r_2(m)=\#\{(a,b)\in\Z^2:a^2+b^2=m\}.
\]
Put
\begin{equation*}
 v=X^\theta \qquad \textrm{ with }\quad \theta=\frac{1}{20}.
\end{equation*}
Let $\mu(a)$ denote the M\"obius function. 
Define the multiplicative functions $g_1$ and $g_2$ on primes by
\begin{equation}
 g_1(p)=1-\frac{\chi(p)}p,
 \qquad
 g_2(p)=
 \begin{cases}
  2-p^{-1},&p\equiv1\pmod4,\\
  p^{-1},&p\equiv3\pmod4.
 \end{cases}
 \label{eq:g1g2}
\end{equation}
Only at odd squarefree values of $g_1$ and $g_2$ will occur. Now we introduce Hooley's weight
\begin{equation}
 t_v(m)=
 \sum_{\substack{a\leqslant v\\ a\mid m \\p\mid a\Rightarrow p\equiv1\, (4)}}
 \frac{\mu(a)}{g_2(a)}\frac{\log(v/a)}{\log v}
 \label{eq:tv}
\end{equation}
and
\begin{equation}
 \rho_v(m)=r_2(m)t_v(m)\qquad(m\geqslant 1).
 \label{eq:rhov}
\end{equation}
The weight $t_v$ need not be nonnegative. We never use nonnegativity. What matters is the support implication
\begin{equation}
 m\text{ is not a sum of two squares}\quad\Longrightarrow\quad \rho_v(m)=0.
 \label{eq:support-implication}
\end{equation}
Let 
\begin{equation}
 R(n)=\sum_{z\in\calZ}\rho_v(m_{n,z})
 =\sum_{z\in\calZ}\rho_v\!\left(\frac{n-z^2}{\lambda_0}\right).
 \label{eq:Rn}
\end{equation}

We can now state our main proposition.
\begin{proposition}
  \label{lem:variance}
Let $W=W(w)$, $R$ and $\calN$ be as in \eqref{eq:W-product},
\eqref{eq:Rn} and \eqref{definecalN}, respectively.
Suppose that \eqref{sizeY} holds and put
\[
 \mu_0=\frac1{|\calN|}\sum_{n\in\calN}R(n).
\]
Then, for fixed $W$ and all sufficiently large $X$, one has $\mu_0>0$ and
\begin{equation}
 \sum_{n\in\calN}|R(n)-\mu_0|^2
 \leqslant \Big(O_W(1)\frac{\sqrt{\log X}}{Y}
 +O(w^{-1})+o_W(1)\Big)|\calN|\mu_0^2.
 \label{eq:branch-variance}
\end{equation}
The implied constant in $O(w^{-1})$ is absolute, and the error terms are
uniform in the admissible choices of
$Y$, $j_0$, $c$ and $\beta_c$.
\end{proposition}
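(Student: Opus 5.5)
Expanding the square, we have
\[
\sum_{n\in\calN}|R(n)-\mu_0|^2=\sum_{n\in\calN}R(n)^2-|\calN|\mu_0^2,
\]
so it suffices to compute the first moment $\sum_{n}R(n)$ asymptotically and to bound the second moment $\sum_n R(n)^2$ from above by $(1+O(w^{-1})+o_W(1))|\calN|\mu_0^2$ plus a diagonal term. We open the square and write
\[
\sum_n R(n)^2=\sum_{z_1,z_2\in\calZ}\sum_{n\in\calN}\rho_v(m_{n,z_1})\rho_v(m_{n,z_2}),
\]
and split the sum into the diagonal $z_1=z_2$ and the off-diagonal $z_1\ne z_2$.

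\textbf{First moment.} For fixed $z$, the map $n\mapsto m_{n,z}$ runs over an arithmetic progression of modulus $4W$ (up to the factor $\lambda_0$), whose terms are $\equiv1\pmod4$ and coprime to $W$ by Lemma~\ref{lem:three-branches}. We therefore need $\sum_{m\le M,\ m\equiv a\,(q)}r_2(m)t_v(m)$. We open $t_v$ as a sum over $a\le v$ with all prime factors $\equiv1\pmod4$ and $(a,W)=1$, and use the lattice-point count $\sum_{m\equiv a (q),\,d\mid m}r_2(m)=\pi M\,\kappa(d,q)+O(M^{2/3})$, where $\kappa$ is a multiplicative local density. Since $v=X^{1/20}$, the error terms are negligible. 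The main term is $X$ times a singular series in $W$ times $\sum_{a\le v}\mu(a)h(a)\log(v/a)/\log v$. By the standard Selberg-type contour evaluation, as in Hooley and McGrath, this sum is $\asymp \prod_{w<p\le v,\,p\equiv1(4)}(1-1/p)\cdot\frac{1}{\log v}$-type quantity. Combined with the factor $r_2$ mean, it gives $\mu_0\asymp_W |\calZ|/\sqrt{\log X}\cdot c_W$ with $c_W>0$, independent of $z$ up to $o_W(1)$. This yields $\mu_0>0$ and $\mu_0\asymp_W Y/\sqrt{\log X}$.

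\textbf{Diagonal.} We bound $\sum_n\rho_v(m_{n,z})^2\ll_W X/\sqrt{\log X}$. This holds since $t_v$ behaves like $(\log v)^{-1/2}$-normalised on the support of $r_2$, which follows from Hooley's/McGrath's second moment of $r_2t_v$ in progressions, or more crudely from $\sum r_2(m)^2t_v(m)^2$ via the same divisor expansion over $a_1,a_2\le v$. Summed over $z\in\calZ$, this gives $O_W(|\calN|Y/\sqrt{\log X})=O_W(\sqrt{\log X}/Y)|\calN|\mu_0^2$, which is the first error term.

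\textbf{Off-diagonal (main obstacle).} For $z_1\ne z_2$, we need the correlation $\sum_n\rho_v(m_{n,z_1})\rho_v(m_{n,z_2})$, i.e.\ a shifted convolution of $r_2t_v$ with shift $h=(z_2^2-z_1^2)/\lambda_0$ in a progression. We expand both $t_v$ weights into $a_1,a_2\le v$, reducing to counting $\sum_{n}r_2(m_1)r_2(m_2)$ with $a_i\mid m_i$, $m_1-m_2=h$. This is a quaternary quadratic form problem, handled by the circle method or Kloosterman-type estimates (the results of \cite{McGrath,KimmelKuperberg}) with power-saving error uniformly for $a_1a_2\le v^2=X^{1/10}$ and $h\le Y^2$. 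The main term is $X\cdot\Sscr(h;a_1,a_2)$ with a local singular series. After summing the Möbius weights, one gets $\mu$-level product $\prod_p$ of local factors. The key point is that the primes $p\le w$ are removed by the W-trick, so the local factors at $p\mid h$ with $p>w$ differ from the independent product $(\text{mean})^2$ by $1+O(\sum_{p\mid h,\,p>w}1/p^2)$-type corrections, plus $O(1/p)$ only at the primes dividing $h$, whose total contribution, averaged over $h=(z_2^2-z_1^2)/\lambda_0$ with $z_i$ ranging over $\calZ$, is $O(\sum_{p>w}p^{-2}\cdot p)$. Done carefully, this gives a factor $1+O(w^{-1})$. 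Controlling this uniformity in $h$ and the Selberg-weight sum for the correlated local densities is where the real work lies, and we would first try to factor the correlation singular series as the square of the single densities times $\prod_{p\mid h,p>w}(1+O(1/p))$, bounding the average over $h$ of that product by $1+O(1/w)$.
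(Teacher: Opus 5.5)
\textbf{This has the paper's architecture, but the decisive step is missing.} Your outline follows the paper's route: the variance equals $\sum_n R(n)^2-|\calN|\mu_0^2$; the diagonal $z_1=z_2$ is bounded by McGrath's second moment of $r_2$ in progressions; the off-diagonal uses the Kimmel--Kuperberg asymptotic for $\sum r_2(m)r_2(m+h)$ with $a\mid m$, $b\mid m+h$; and the resulting singular series is averaged over $h=(z_1^2-z_2^2)/\lambda_0$. However, you stop exactly at the step that proves the proposition, and the heuristic you give there is wrong as written.

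\textbf{The sieve-weighted main term.} The $\log(v/a)$-weighted M\"obius sums over $a,b\leqslant v$ are truncated, hence not multiplicative. So you cannot simply ``factor the correlation singular series as the square of the single densities times $\prod_{p\mid h}(1+O(1/p))$''. The missing idea is to interchange the $t$-sum in $\Gamma(h,a,b,W)$ with the $a,b$-sums. For fixed $t$ the $a$- and $b$-sums then decouple into $D_{v,W_1}(t_1)^2$. Hooley's asymptotic (Lemma~\ref{lemma:DvQt}) turns the main term into $\frac{64A^2\log v}{\pi^2g_1(W_1)^2}\Sscr_W(h)$, where $\Sscr_W(h)=\sum_{(t,2W)=1}\gamma(t)c_t(h)t^{-2}$. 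The $O(t^{1/4}/\sqrt{\log v})$ error is summable against $|c_t(h)|t^{-2}$ by \eqref{eq:Ramanujan-bound2}. Since $D_{v,Q}(1)=X_{v,Q}$, the $t=1$ term is exactly the square of the first-moment constant, and this is what lets it cancel $|\calN|\mu_0^2$. Incidentally, your first-moment normalization is off: $X_{v,W_1}/\log v\asymp_W(\log v)^{-1/2}$. That is, it is comparable to $\prod_{w<p\leqslant v,\,p\equiv1(4)}(1-1/p)$ itself, not to that product times $1/\log v$.

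\textbf{The average over $h$.} Your bound $O(\sum_{p>w}p^{-2}\cdot p)=O(\sum_{p>w}p^{-1})$ diverges, so it does not give $1+O(w^{-1})$. The correct weight is the probability $\approx 2/p$ that $p\mid z_1^2-z_2^2$, times the excess $O(1/p)$, which gives $\sum_{p>w}p^{-2}\ll w^{-1}$. Even then you would still need to control products over several primes dividing $h$, and primes too large for $z\in\calZ$ to equidistribute modulo $p$.

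The paper avoids all of this by averaging the Ramanujan sums directly:
\begin{itemize}
\item For $t\leqslant T=(Y/W)^{1/3}$, the Gauss-sum identity $\sum_{a,b\bmod t}c_t(a^2-b^2)=t\phi(t)$ gives $\sum_{z_1,z_2\in\calZ}c_t(h)=|\calZ|^2\phi(t)/t+O(Yt/W)$.
\item The tail $t>T$ is $\ll\tau(|h|)/T$ by \eqref{eq:Ramanujan-bounds}.
\item The average is therefore $M_W+o_W(1)$ with $M_W=\sum_{(t,2W)=1}\gamma(t)\phi(t)t^{-3}$.
\item Finally $0\leqslant M_W-1\ll w^{-1}$, because $0\leqslant\gamma\leqslant1$ and every $t>1$ coprime to $2W$ exceeds $w$.
\end{itemize}
With these two ingredients your plan becomes the paper's proof. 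Without them, the $O(w^{-1})$ term in \eqref{eq:branch-variance}, which is the heart of the proposition, is not established.
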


The proof of Proposition \ref{lem:variance} is deferred to Section~\ref{sec:branch-moments}.

\begin{proof}[Proof of Theorem \ref{thm:main}]
We first reduce an arbitrary function $\Psi(n)\to\infty$ to a slowly growing dyadic parameter. For $X$ large, define
\begin{equation*}
 \Psi_*(X)=\min_{X<n\le2X}\Psi(n),
 \qquad
 \psi(X)=\min\{\Psi_*(X),\log\log\log X\}.
\end{equation*}
Then $\psi(X)\to\infty$. Put
\begin{equation*}
 Y=\left\lfloor\sqrt{\log X}\,\psi(X)\right\rfloor.
\end{equation*}
For every $n\in(X,2X]$ and all sufficiently large $X$,
\begin{equation}
 \frac{1}{2}\sqrt{\log X}\,\psi(X)<Y<\sqrt{\log n}\,\Psi(n),
 \label{eq:Y-below-target}
\end{equation}
so \eqref{sizeY} holds.

Fix $w$ and hence $W=W(w)$. Partition the integers $n\in(X,2X]$ satisfying \eqref{eq:localcongruence} into the finitely many branches $\mathcal{N}$ defined in \eqref{definecalN}. On every branch,
\[
 |\mathcal{Z}|=\frac{Y}{2W}+O(1)\gg _W\sqrt{\log X}\,\psi(X).
\]
For sufficiently large $X$, Proposition \ref{lem:variance} gives $\mu_0>0$.
Since $R(n)=0$ implies $|R(n)-\mu_0|^2=\mu_0^2$, Proposition \ref{lem:variance} yields
\begin{align*}
 \frac{\#\{n\in\calN:R(n)=0\}}{|\calN|}
 &\leqslant \frac{1}{|\calN|\mu_0^2}\sum_{n\in\calN}|R(n)-\mu_0|^2\\
 &\leqslant O_W(1)\frac1{\psi(X)} +O(w^{-1})+o_W(1).
\end{align*}
Since $\frac1{\psi(X)}=o(1)$, it follows that
\begin{equation*}
\frac{\#\{n\in\calN:R(n)=0\}}{|\calN|}
 \leqslant O(w^{-1})+o_W(1).
\end{equation*}
The number of branches depends only on $W$, so the same bound holds after summing over all branches. Thus,
\begin{equation*}
\frac{\#\{n\in (X,2X]:\ R(n)=0 \ \textrm{and}\ n \ \textrm{satisfies}\ \eqref{eq:localcongruence}\}}{
\#\{n\in (X,2X]:\ n \ \textrm{satisfies}\ \eqref{eq:localcongruence}\}}
 \leqslant O(w^{-1})+o_W(1).
\end{equation*}

For any $\eps>0$, choose $w=w_\eps$ to be sufficiently large such that $O(w^{-1})$ term is $<\eps/4$. Then choose $X_0(\eps)$ sufficiently large such that whenever $X\geqslant X_0(\eps)$ then the $o_W(1)$ term is also $<\eps/4$. Then obtain that \begin{equation*}
\frac{\#\{n\in (X,2X]:\ R(n)=0 \ \textrm{and}\ n \ \textrm{satisfies}\ \eqref{eq:localcongruence}\}}{
\#\{n\in (X,2X]:\ n \ \textrm{satisfies}\ \eqref{eq:localcongruence}\}}
 <\eps/2
\end{equation*}
for $X>X_0(\eps)$. We introduce $\mathcal{E}(X_1,X_2)$ to denote the set consisting of $n\in (X_1,X_2]$ satisfying \eqref{eq:localcongruence} and that $n$ cannot be represented in the form $n=x^2+y^2+z^2$ with $0<z<\sqrt{n}\Psi(n)$. By Lemma \ref{lem:three-branches}, \eqref{eq:support-implication} and \eqref{eq:Y-below-target}, 
$$\mathcal{E}(X,2X)\subseteq \{n\in (X,2X]:\ R(n)=0 \ \textrm{and}\ n \ \textrm{satisfies}\ \eqref{eq:localcongruence}\}.$$
Therefore, 
for $X>X_0(\eps)$,
\begin{equation*}
\#\mathcal{E}(X,2X)<
\frac{\eps}{2}\#\{n\in (X,2X]:\ n \ \textrm{satisfies}\ \eqref{eq:localcongruence}\}
\end{equation*}

Then a dyadic argument gives
\begin{equation*}
\#\mathcal{E}(X_0(\eps),X)<\frac{\eps}{2}\big(X-X_0(\eps)\big)<\frac{\eps}{2}X
\end{equation*}
and for $X>\frac{2}{\eps}X_0(\eps)$,
\begin{equation*}
\#\mathcal{E}(0,X)<\eps X.
\end{equation*}
This proves Theorem \ref{thm:main}.
\end{proof}

\section{Moments of $\rho_v(m)$ in arithmetic progressions}\label{sec:weighted-moments}

We first introduce some elementary estimates on the Ramanujan sum.
For $q\geqslant 1$, the Ramanujan sum is
\begin{equation}
 c_q(h)=\sum_{\substack{a\bmod q\\(a,q)=1}}e\!\left(\frac{ah}{q}\right)
 =\sum_{d\mid(q,h)}d\,\mu(q/d).
 \label{eq:Ramanujan}
\end{equation}
It is real and even in $h$. We write
\[
 \tau(m)=\sum_{d\mid m}1,
 \qquad
 \sigma_s(m)=\sum_{d\mid m}d^s.
\]

\begin{lemma}Suppose that $h\not=0$. Then
  \begin{equation}
 \sum_{q>T}\frac{|c_q(h)|}{q^2}\ll\frac{\tau(|h|)}{T}
 \label{eq:Ramanujan-bounds}
\end{equation}
and for $1<k\leqslant 2$ \begin{equation}
 \sum_{q=1}^\infty\frac{|c_q(h)|}{q^{k}}\ll  \sigma_{1-k}(|h|).
 \label{eq:Ramanujan-bound2}
\end{equation}
\end{lemma}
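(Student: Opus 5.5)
Both bounds follow from the divisor-sum form of the Ramanujan sum in \eqref{eq:Ramanujan}, which gives
\[
|c_q(h)|\leqslant \sum_{d\mid (q,h)} d .
\]
I would insert this into each sum, interchange the order of summation, and write $q=dr$.

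For \eqref{eq:Ramanujan-bounds} this gives
\[
\sum_{q>T}\frac{|c_q(h)|}{q^2}\leqslant \sum_{d\mid |h|} d\sum_{\substack{q>T\\ d\mid q}}\frac1{q^2}
=\sum_{d\mid |h|}\frac{1}{d}\sum_{r>T/d}\frac1{r^2}.
\]
The inner sum is $\ll \min(1,d/T)$. Indeed, it is at most $\zeta(2)$ in general, and when $T/d\geqslant1$ it is $\ll d/T$. In either case the term indexed by $d$ is
\[
\ll \frac1d\cdot\frac dT=\frac1T .
\]
The first alternative applies when $d>T$, and then $1/d<1/T$. Summing over the $\tau(|h|)$ divisors $d$ of $|h|$ gives $\ll\tau(|h|)/T$. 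The implied constant is absolute, since the hypothesis $h\neq0$ makes the divisor set finite.

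For \eqref{eq:Ramanujan-bound2} with $1<k\leqslant2$, the same substitution gives
\[
\sum_{q\geqslant1}\frac{|c_q(h)|}{q^k}\leqslant\sum_{d\mid|h|} d\cdot d^{-k}\sum_{r\geqslant1}r^{-k}=\zeta(k)\,\sigma_{1-k}(|h|).
\]
This is the claimed bound, with implied constant $\zeta(k)$.

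The only point needing care is that $\zeta(k)$ blows up as $k\to1^+$. The implied constant in \eqref{eq:Ramanujan-bound2} is therefore uniform only for $k$ bounded away from $1$ (say $k\geqslant1+\eps$), or else should be read as $\ll_k$. In the later applications $k$ will be a fixed exponent such as $2$ or $3/2$, so the statement as written suffices. Nothing else is an obstacle: the interchange of summations is justified by absolute convergence, since all terms are nonnegative and the sum over $d$ is finite.
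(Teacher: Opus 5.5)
Your proposal is correct and follows the paper's proof: bound $|c_q(h)|$ by $\sum_{d\mid(q,h)}d$, swap the sums, write $q=dt$, and then use $\sum_{t>T/d}t^{-2}\ll d/T$ (which actually holds for every $d$, so your case split is harmless but unnecessary) or $\sum_{t\geqslant1}t^{-k}=\zeta(k)$. Your remark that the implied constant in \eqref{eq:Ramanujan-bound2} is $\zeta(k)$, and so depends on $k$, is a correct refinement of the paper's slightly careless ``$\sum_{t>1/d}t^{-2}\ll1$''; it causes no harm, because the paper only applies the bound with the fixed exponents $k=2,\,7/4,\,3/2$.
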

\begin{proof}One has\begin{align*}
 \sum_{q>T}\frac{|c_q(h)|}{q^{k}}=\sum_{d\mid h}d\sum_{\substack{q>T \\ d\mid q }}\frac{\mu(q/d)}{q^k}
 \ll \sum_{d\mid h}d^{1-k}\sum_{t>T/d}\frac{1}{t^k} .
\end{align*}
When $k=2$, we use $\sum_{t>T/d}\frac{1}{t^2}\ll d/T$ to obtain \eqref{eq:Ramanujan-bounds}. When $T=1$, we use $\sum_{t>1/d}\frac{1}{t^2}\ll 1$ to obtain \eqref{eq:Ramanujan-bound2}.
\end{proof}

\subsection{The cited $r_2$ estimates}

In this subsection, we quote some known results to make the logical dependence of the proof transparent. For convenience, we often simplify the notation $x\equiv a\pmod{q}$ to be $x\equiv a(q)$. We may recall the definitions of $g_1$ and $g_2$ in \eqref{eq:g1g2}.

\begin{lemma}\label{lemma:first-r2}
Let $W,d$ be odd squarefree integers with $(d,W)=1$, and let $(\alpha,W)=1$. Uniformly for $x\asymp X$,
\begin{equation*}
 \sum_{\substack{m\leqslant x\\m\equiv\alpha\, (W)\\m\equiv1\, (4)\\d\mid m}}
 r_2(m)
 =\frac{\pi x}{2Wd}g_1(W)g_2(d)+R_1(x;d,W),
\end{equation*}
where
\begin{equation*}
 R_1(x;d,W)\ll_\eps W^{1/2}dx^{\eps}+x^{1/3+\eps}d^{1/2}.
\end{equation*}
\end{lemma}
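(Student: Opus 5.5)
We count lattice points: the sum equals the number of $(a,b)\in\Z^2$ with $a^2+b^2\leqslant x$, $a^2+b^2\equiv\alpha\pmod W$, $a^2+b^2\equiv1\pmod4$ and $d\mid a^2+b^2$. The conditions depend only on $(a,b)$ modulo $q=4Wd$; by the Chinese remainder theorem (since $W$, $d$ are odd and coprime) they split into independent conditions modulo $4$, modulo $W$ and modulo $d$. Hence the sum is
\[
\sum_{\substack{(a_0,b_0)\bmod q\\ \text{conditions}}}\#\{(a,b)\equiv(a_0,b_0)\pmod q:\ a^2+b^2\leqslant x\}.
\]
Each inner count is the number of lattice points of the shifted lattice $(a_0,b_0)+q\Z^2$ in a disc of radius $\sqrt x$. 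Two options: either use the trivial $\pi x/q^2+O(\sqrt x/q+1)$, or, for the sharper error, expand the congruences into additive characters and apply a Voronoi/van der Corput type estimate for the circle problem in progressions.

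For the main term, the local densities give $\pi x\cdot \frac{N_4N_WN_d}{16W^2d^2}$, where $N_4=\#\{(a,b)\bmod4:a^2+b^2\equiv1\}=8$, $N_W=\prod_{p\mid W}\#\{a^2+b^2\equiv\alpha\pmod p\}=\prod_{p\mid W}(p-\chi(p))$ since $\alpha\not\equiv0\pmod p$, and $N_d=\prod_{p\mid d}\#\{(a,b)\bmod p:\ a^2+b^2\equiv0\}\cdot p^{\,\cdot}$. Counting the last modulo $d$ (not $d^2$ lifts: condition is modulo $d$, so pairs modulo $d$): $\#\{a^2+b^2\equiv0\pmod p\}=p+(p-1)\chi(p)$, which is $2p-1$ or $1$. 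Assembling with the correct normalisation (counting pairs modulo $4Wd$ but conditions only modulo $4,W,d$), the density is $\frac{8}{16}\cdot\frac{1}{W}\prod_{p\mid W}(1-\chi(p)/p)\cdot\frac1d\prod_{p\mid d}\frac{p+(p-1)\chi(p)}{p}$, and $\frac{p+(p-1)\chi(p)}{p}$ equals $2-p^{-1}$ for $p\equiv1\pmod4$ and $p^{-1}$ for $p\equiv3\pmod4$, i.e. $g_2(p)$. This yields $\frac{\pi x}{2Wd}g_1(W)g_2(d)$.

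The main obstacle is the error term. The trivial bound costs $q^2(\sqrt x/q)=q\sqrt x$, which is too weak. Instead I would write the count as $\sum_{h\bmod q}$ of Gauss-sum weighted exponential sums: detect the conditions via $\frac1d\sum_{k\bmod d}e(k(a^2+b^2)/d)$ etc., so the sum becomes a weighted average of $\sum_{a^2+b^2\leqslant x}e(k(a^2+b^2)/q')$. The $k=0$ frequency gives the main term plus the classical circle-problem error $O(x^{1/3})$, weighted by the density $\ll g_2(d)/d$ times $d$ frequencies. For $k\neq0$, the two-dimensional quadratic Gauss sum has size $q'$, and applying Poisson summation in each residue class (or the known Voronoi formula for $r_2$ twisted by $e(km/q')$, which is modular of level $4q'^2$) gives a bound $\ll q'^{1/2}x^{1/3+\eps}$ summed suitably, and a term $\ll q' x^\eps$ from the dual side. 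Splitting the $W$ and $d$ frequencies multiplicatively and using that the $W$-part contributes $W^{1/2}$ (square-root cancellation of Gauss sums at primes where $\alpha$ is a unit) while the $d$-part contributes at most $d$ and $d^{1/2}$ respectively, I would aim for exactly $R_1\ll W^{1/2}dx^\eps+x^{1/3+\eps}d^{1/2}$; if the direct Voronoi route is cumbersome, I would instead cite the standard result on $\sum_{m\leqslant x,\,m\equiv\ell\,(q)}r_2(m)$ (e.g. via Smith's or Huxley's lattice-point bounds), sum over the $\asymp$ admissible classes modulo $d$, and verify the stated error.
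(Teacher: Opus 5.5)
Your main-term computation is correct. The densities are $8/16$ at $4$, then $(p-\chi(p))/p^2=g_1(p)/p$ for $p\mid W$ (using $(\alpha,W)=1$), and $(p+(p-1)\chi(p))/p^2=g_2(p)/p$ for $p\mid d$; their product is $g_1(W)g_2(d)/(2Wd)$. The paper proves nothing here: it quotes the lemma from McGrath's Lemma 5.3 and Kimmel--Kuperberg's Lemma 10.

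The gap is the error term, which is the real content, and your sketch goes wrong in two places.

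First, you dismiss the elementary count because you count all $q^2$ classes. Only admissible classes $(a_0,b_0)\bmod q$ occur, and there are $8Wd\,g_1(W)g_2(d)\ll_\eps (Wd)^{1+\eps}$ of them. Using $\pi x/q^2+O(\sqrt x/q+1)$ per class already gives $R_1\ll_\eps (Wd)^{\eps}(\sqrt x+Wd)$. Van der Corput's bound $\pi R^2+O(R^{2/3}+1)$, uniform in the centre of the disc, improves this to $R_1\ll_\eps (Wd)^{1/3+\eps}x^{1/3}+(Wd)^{1+\eps}$.

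Second, in the Voronoi route the difference between $W$ and $d$ does not come from the Gauss sums. For odd squarefree $q'$ and $(k,q')=1$ one has $G(k;q')^2=\chi(q')q'$ exactly, whatever the residue class. The saving appears only when the frequency $k$ is summed after Poisson summation. The $n$-th dual term then carries the Kloosterman sum $S(\alpha,\overline{4}n;W')$ with $W'=W/(k,W)$. This has square-root cancellation by Weil precisely because $(\alpha,W)=1$, whereas at the primes of $d$ it degenerates to a Ramanujan sum $c_{d'}(n)$. Bounding each twisted sum $\sum_{a^2+b^2\leqslant x}e(k(a^2+b^2)/q')$ separately and then averaging over $k$, as you propose, cannot give the stated uniformity in $W$.

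Relatedly, the $k=0$ frequency does not give the main term by itself. The factor $g_1(W)g_2(d)$ comes from the zero dual frequency of every $k$, through $c_{W'}(\alpha)=\mu(W')$ and $c_{d'}(0)=\phi(d')$.

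So the bound $W^{1/2}dx^\eps+x^{1/3+\eps}d^{1/2}$ is asserted, not derived. Only your fallback of citing the literature carries it, which is what the paper does. If you cite, note that standard results for $r_2$ in progressions assume $(\ell,q)=1$, while $d\mid m$ is a non-reduced class. You need either a version for arbitrary classes, or a per-class result for the shifted lattices $(a_0,b_0)+d\Z^2$ with the condition modulo $4W$, summed over the $\ll d^{1+\eps}$ classes.

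Your elementary route, with the class count corrected, already gives more than you credited. For fixed $W$ and $d\leqslant x$, the van der Corput bound above is
$\ll_{W,\eps} d^{1/3+\eps}x^{1/3}+d^{1+\eps}\leqslant d^{1/2}x^{1/3+\eps}+dx^\eps$.
That is the lemma exactly as stated, except that the implied constant depends on $W$. For $d>x$ the sum is empty and the claim is trivial. The Kloosterman-sum analysis buys only uniformity in $W$, which the paper never uses.

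The lemma is applied only in Lemma~\ref{lemma-firstmoment}, with $W$ fixed and $d=a\leqslant v=X^{1/20}$. There the weights satisfy $|\mu(a)|\log(v/a)/(g_2(a)\log v)\leqslant1$, because $g_2(p)=2-p^{-1}\geqslant1$ for $p\equiv1\pmod4$. Even the trivial per-class bound then gives
$\mathcal E_1\ll_{W,\eps}\sum_{a\leqslant v}a^{\eps}(\sqrt X+a)\ll X^{1/2}v^{1+\eps}+v^{2+\eps}=o_W(X/\sqrt{\log X})$.
So your own lattice-point setup yields a complete, self-contained proof of what the paper actually needs.
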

This is McGrath~\cite[Lemma 5.3]{McGrath}, see also Kimmel--Kuperberg~\cite[Lemma 10]{KimmelKuperberg}.

\begin{lemma}\label{lem:shifted-r2}
Let $W,c_1,c_2$ be odd squarefree integers such that
\[
 (\alpha,W)=(\alpha+h,W)=(c_1,W)=(c_2,W)=1,
 \qquad 4\mid h.
\]
Assume also that $0<h<x^{3/4}$. Then
\begin{align*}
 &\sum_{\substack{m\leqslant x\\m\equiv\alpha\, (W)\\m\equiv1\, (4)\\c_1\mid m\\c_2\mid m+h}}
 r_2(m)r_2(m+h)\notag\\
 &\qquad =\frac{\pi^2x}{W}g_1(W)^2\Gamma(h,c_1,c_2,W)
 +R_2(x;c_1,c_2,W),
\end{align*}
where
\begin{equation*}
 \Gamma(h,c_1,c_2,W)
 =\frac{g_2(c_1)g_2(c_2)}{c_1c_2}
 \sum_{\substack{t\geqslant 1\\(t,2W)=1}}
 \frac{c_t(h)(c_1,t)(c_2,t)
 \chi((c_1^2,t))\chi((c_2^2,t))}
 {t^2\Psi(c_1,t)\Psi(c_2,t)}
\end{equation*}
with
\begin{equation}
 \Psi(u,t)=g_2\!\left(\Big(u,\frac{t}{(u,t)}\Big)\right),
 \label{eq:Psi-local}
\end{equation}
and
\begin{equation*}
 R_2(x;c_1,c_2,W)
 \ll_\eps W^{1/2}c_1c_2x^{3/4+\eps}
 +(c_1c_2)^{1/2}x^{5/6+\eps}.
\end{equation*}
If $(c_1,c_2)\nmid h$, both the original sum and the main term vanish.
\end{lemma}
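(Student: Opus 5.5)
Since this lemma is quoted from McGrath \cite[Lemma 5.3 and its shifted analogue]{McGrath} and Kimmel--Kuperberg \cite{KimmelKuperberg}, the plan is to reconstruct their argument: a delta-method count for a quaternary quadratic form with congruence conditions on the variables. Writing $m=a^2+b^2$ and $m+h=c^2+d^2$, the left-hand side counts integer points $(a,b,c,d)$ with $a^2+b^2\leqslant x$ and
\[
c^2+d^2-a^2-b^2=h .
\]
The conditions $m\equiv\alpha\ (W)$, $m\equiv1\ (4)$, $c_1\mid m$ and $c_2\mid m+h$ depend only on $(a,b,c,d)$ modulo $M=4Wc_1c_2$. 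We therefore split the count into residue classes $(a,b,c,d)\equiv(a_0,b_0,c_0,d_0)\pmod M$ satisfying these congruences. Substituting $a=a_0+Ma'$ and so on gives, in each class, an inhomogeneous representation problem for the indefinite form $Q=x_1^2+x_2^2-x_3^2-x_4^2$ with a smooth (or sharp) cutoff. To each class we apply Heath-Brown's delta-method asymptotic for quaternary forms with $h\neq0$.

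The first step is the main term. Summing the singular series times singular integral over all admissible classes should reassemble as $\frac{\pi^2x}{W}g_1(W)^2\Gamma(h,c_1,c_2,W)$. The singular integral for $a^2+b^2\leqslant x$, $c^2+d^2=a^2+b^2+h$ gives $\pi^2x$. The singular series factors over primes, and I would compute the local densities prime by prime.
\begin{itemize}
\item For $p\nmid 2Wc_1c_2$ the density is $\sum_{k}c_{p^k}(h)/p^{2k}$. This produces the Ramanujan-sum series over $t$ with $(t,2W)=1$.
\item For $p\mid W$ the densities of $a^2+b^2\equiv\alpha$ and $\equiv\alpha+h$, both units, each give $(1-\chi(p)/p)/p$. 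Together these produce $g_1(W)^2/W$ and remove $p\mid W$ from the $t$-sum.
\item For $p\mid c_i$, imposing $p\mid a^2+b^2$ gives density $g_2(p)/p$. Interacting this with the $p$-part of $t$ yields the factors $(c_i,t)$, $\chi((c_i^2,t))$ and $\Psi(c_i,t)$.
\item At $p=2$, the condition $m\equiv1\ (4)$ together with $4\mid h$ gives the normalization.
\end{itemize}
The final claim is checked directly. If a prime $p$ divides $(c_1,c_2)$ but not $h$, then $p\mid m$ and $p\mid m+h$ force $p\mid h$, so the sum is empty. The same incompatibility kills the $p$-adic density, so the main term vanishes too.

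The second step, which I expect to be the main obstacle, is the error term with explicit dependence on the moduli. The delta method with Kloosterman/Weil bounds for the quadratic Gauss-sum exponential sums gives an error of roughly $x^{3/4+\eps}$ per class, with a polynomial loss in $M$. Because of the square-root cancellation in the sums modulo $W$, this should aggregate over classes to $W^{1/2}c_1c_2x^{3/4+\eps}$. To obtain the alternative bound $(c_1c_2)^{1/2}x^{5/6+\eps}$, which is better when $c_1c_2$ is large, I would instead write $r_2(m)=4\sum_{d\mid m}\chi(d)$ for the variable constrained by $c_1$. One then counts lattice points of $c^2+d^2=m+h$ with $m$ in progressions modulo $dc_1W$, using the first-moment Lemma~\ref{lemma:first-r2} with its $x^{1/3+\eps}d^{1/2}$ error for $d$ up to a cutoff, and the hyperbola switch for large $d$. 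Balancing the cutoff should give the $x^{5/6}$ exponent. The restriction $0<h<x^{3/4}$ keeps the singular integral uniform.
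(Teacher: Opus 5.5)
The paper does not prove this lemma. It quotes it from Kimmel--Kuperberg (Lemma 11), see also McGrath (Lemma A.3); McGrath's Lemma 5.3, which you cite, is the first-moment Lemma~\ref{lemma:first-r2}. So your outline has to stand on its own, and its load-bearing part is not established. That part is the error term $R_2$ with explicit polynomial dependence on $W,c_1,c_2$, and it is exactly what the paper relies on: in Lemma~\ref{lemma-shift}, $R_2$ is summed over $c_1,c_2\leqslant v=X^{1/20}$, so the exponents of $c_1c_2$ matter.

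Your main-term discussion is acceptable as a heuristic. The local factors at $p\nmid 2Wc_1c_2$ and at $p\mid W$ are right, and so is the vanishing when $(c_1,c_2)\nmid h$. The factor at $p\mid c_i$, however, is asserted rather than computed. Also, the singular integral equals $\pi^2x$ exactly for every $h>0$, so $h<x^{3/4}$ is only needed to keep $m+h\asymp x$.

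Your delta-method route has two problems. First, Heath-Brown's quaternary theorem is stated for smooth compactly supported weights, with implied constants depending on the polynomial. Applying it class by class to $Q(x_0+My)$ with an unspecified ``polynomial loss in $M$'', and then summing over the $\asymp M^3$ classes, does not produce $W^{1/2}c_1c_2$. Second, passing from a smooth weight to the sharp condition $m\leqslant x$ costs in the exponent. Note also that $R_2$ is bounded by a \emph{sum} of the two terms. You never need two methods for two ranges: one complete argument yielding either term would suffice.

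Your divisor-switching alternative is the natural classical route, but as written it fails at a definite step. After writing $r_2(m)=4\sum_{e\mid m}\chi(e)$ and applying the hyperbola reduction (using $\chi(m/e)=\chi(e)$ for $m\equiv1\pmod4$), you are left with \emph{all} $e<\sqrt x$. For each such $e$, the inner sum is $\sum r_2(n)$ over $n=m+h$ in the inhomogeneous class $n\equiv h\pmod{[e,c_1]}$, together with $n\equiv\alpha+h\pmod W$ and $c_2\mid n$. Lemma~\ref{lemma:first-r2} does not cover this. It treats only a divisibility condition $d\mid m$ and a coprime residue modulo a squarefree modulus.

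Now consider the bookkeeping. If you charge that lemma's $x^{1/3+\eps}d^{1/2}$ error to the varying divisor $e$, the sum over $e<\sqrt x$ is of size $x^{13/12}$. Truncating at $x^{1/3}$ gives $x^{5/6}$, but leaves $x^{1/3}<e\leqslant x^{1/2}$ untouched. The hyperbola switch cannot reach that range, because it only trades $e>\sqrt m$ for $m/e<\sqrt m$. So there is nothing to balance.

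What the argument actually needs is a first-moment estimate for $r_2$ in an arbitrary progression modulo $q$ (with $q$ not necessarily squarefree and the residue not necessarily coprime to $q$) whose $x^{1/3}$ term does not grow with $q$. For example, an error $\ll q^{1/2}c_2x^{\eps}+x^{1/3+\eps}c_2^{1/2}$ would do; this is the shape of Lemma~\ref{lemma:first-r2} with the progression modulus in the $W$-slot. It is the analogue of the estimate $\ll(q^{1/2}+x^{1/3})x^{\eps}$ for the divisor function in progressions, which drives the $x^{5/6}$ in the additive divisor problem. Granting such a bound, summing over $e<\sqrt x$ gives $(c_1W)^{1/2}c_2x^{3/4+\eps}+c_2^{1/2}x^{5/6+\eps}$ directly. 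That covers both terms of $R_2$ with no delta method at all. Proving this uniform estimate is the missing idea.
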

This is Kimmel--Kuperberg \cite[Lemma 11]{KimmelKuperberg}, see also McGrath~\cite[Lemma A.3]{McGrath}.

\begin{lemma}\label{lemma:second-r2}
Let $W,d$ be odd squarefree integers with $(d,W)=1$, and let $(\alpha,W)=1$. There exists functions $g_3,g_4,G_5,G_6$ such that uniformly for $x\asymp X$,
\begin{equation*}
 \sum_{\substack{m\leqslant x \\ m\equiv\alpha\, (W)\\m\equiv1\, (4)\\ d\mid m}}
 r_2(m)^2
 =\frac{g_3(W)g_4(d)}{Wd} \Big(\log x+G_5(W)-2G_6(d)\Big)x
 +O_{\eps}(Wx^{\frac{3}{4}+\eps}).
\end{equation*}
\end{lemma}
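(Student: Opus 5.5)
We prove Lemma~\ref{lemma:second-r2} by writing $r_2(m)^2$ through its Dirichlet series and running a contour-integral argument on the progression $m\equiv\alpha\ (W)$, $m\equiv1\ (4)$, $d\mid m$. Since $(\alpha,W)=1$ and $m\equiv1\pmod4$, only $m$ coprime to $2W$ occur. On such $m$ one has $r_2(m)=4\sum_{e\mid m}\chi(e)$. We detect the congruence modulo $W$ with Dirichlet characters $\psi\pmod W$, and $m\equiv1\pmod4$ with $\tfrac12(1+\chi(m))$. The condition $d\mid m$ is handled by writing $m=dk$ and splitting $k$ according to its $d$-part: since $d$ is squarefree, $r_2(dk)$ factors as $r_2$ of the $d$-smooth part times $r_2$ of the part coprime to $d$.

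The main step is to identify, for each character $\psi$ (twisted possibly by $\chi$), the series
\[
 F_\psi(s)=\sum_{m\equiv 0\,(d)}\frac{\psi(m)r_2(m)^2}{m^s}.
\]
Up to an Euler factor that converges absolutely for $\Re s>1/2$, it equals $d^{-s}L(s,\psi)^2L(s,\psi\chi)^2$ divided by $L(2s,\psi^2)$. This comes from $\sum_k\tau_\chi(p^k)^2p^{-ks}$, where $\tau_\chi=1*\chi$: at $p\equiv1\pmod 4$ this gives $(1+p^{-s})/(1-p^{-s})^3$, and at $p\equiv3\pmod4$ it gives a factor supported on even exponents. Hence only the principal character $\psi_0$, or $\psi$ with $\psi\chi$ principal (impossible since $W$ is odd), produces a pole at $s=1$. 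For $\psi_0$ the pole is double, coming from $\zeta(s)L(s,\chi)$ squared times the Euler product. We then apply Perron's formula, shift the contour to $\Re s=1/2+\eps$, and use convexity or mean-value bounds for $L(s,\psi)$ and $L(s,\psi\chi)$ of conductor $\le 4W$. The residue at $s=1$ gives $x(A\log x+B)$. Factoring $A$ and $B$ multiplicatively into a $W$-part, a $d$-part, and a global constant produces the shape $\frac{g_3(W)g_4(d)}{Wd}\bigl(\log x+G_5(W)-2G_6(d)\bigr)x$. Here $G_5$ and $G_6$ are logarithmic derivatives of the local factors at $W$ and $d$. The factor $2$ in front of $G_6$ comes from the double pole interacting with the $d^{-s}$ factor and the squared local factor at primes dividing $d$.

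The error term $O_\eps(Wx^{3/4+\eps})$ should follow easily from a truncated Perron formula with $T=x^{1/4}$. The horizontal segments contribute $x^{1+\eps}/T$. On the vertical line $\Re s=1/2+\eps$ we use fourth-moment-type bounds: Hölder with the mean fourth power of Dirichlet $L$-functions, or simply convexity $L\ll (WT)^{1/4+\eps}$, giving $x^{1/2}(WT)^{1+\eps}$. Balancing these yields $x^{3/4+\eps}W$, after summing trivially over the $\phi(W)$ characters with weight $1/\phi(W)$. Uniformity in $d$ comes from the fact that the $d$-part of the Euler factor is bounded by $\tau(d)^{O(1)}d^{-1/2}$ on the shifted line.

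The main obstacle is the bookkeeping of the residue. We must verify that the $d$-dependence genuinely separates as $g_4(d)(\log x+\cdots-2G_6(d))$ with multiplicative $g_4$ and additive $G_6$, and that the $W$-dependence is equally clean. We would compute the local factor $E_p(s)$ explicitly, separately for $p\mid d$ and $p\mid W$, and then define $g_4(p)=pE_p^{(d)}(1)/E_p(1)$ and $G_6$ as the sum of $E_p'/E_p$. Alternatively, as in the other lemmas, this result can be quoted from McGrath and Kimmel--Kuperberg, where exactly this asymptotic appears.
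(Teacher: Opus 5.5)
Your proposal is correct, but it is not the paper's route. The paper gives no argument for this lemma. It simply quotes McGrath~\cite[Lemma 5.5]{McGrath} (see also Kimmel--Kuperberg~\cite[Lemma 12]{KimmelKuperberg}), as it does for the other $r_2$ inputs.

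\textbf{Comparison.} Your derivation is self-contained, and the steps check out:
\begin{itemize}
\item On odd $m$ one has exactly $\sum_m\psi(m)\tau_\chi(m)^2m^{-s}=L(s,\psi)^2L(s,\psi\chi)^2/L(2s,\psi^2)$ away from the factor at $2$, where $\tau_\chi=1*\chi$.
\item The condition $d\mid m$ changes only the finitely many local factors at $p\mid d$. This produces $\psi(d)d^{-s}\prod_{p\mid d}e_p(s)$ times the full series, a quantity of size $\ll\tau(d)^{O(1)}d^{-1/2}$ on $\mathrm{Re}\,s\geqslant\frac12+\eps$.
\item Only $\psi_0$ gives a pole, and it is double.
\item Perron with $T=x^{1/4}$ and convexity for four $L$-functions of conductor at most $4W$ give an error $O_\eps(W^{1+\eps}x^{3/4+\eps})$, uniformly in $d$ and $\alpha$. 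This is the stated bound after renaming $\eps$ for $W\leqslant x$.
\end{itemize}
What your route buys is transparency: it shows why the main term does not depend on $\alpha$ and why the error carries exactly one power of $W$.

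What the citation buys the paper is normalisation. Lemma~\ref{lemma:XZonetwo}(ii) is quoted for McGrath's particular $g_4$ and $G_6$, so functions you construct yourself must be compatible with it. They are. For fixed $W$ the asymptotic determines $g_3(W)g_4(d)$ and $G_5(W)-2G_6(d)$. Hence any two admissible choices differ by $g_4\mapsto\kappa g_4$, $g_3\mapsto g_3/\kappa$, $G_6\mapsto G_6+c$, $G_5\mapsto G_5+2c$. This leaves $g_3(W)\bigl((\log x+G_5(W))Z^{(1)}_{v,W_1}-2Z^{(2)}_{v,W_1}\bigr)$ in \eqref{eq:containZ12} unchanged.

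\textbf{Two minor corrections.}
\begin{enumerate}
\item The factor $2$ in front of $G_6$ is a normalisation; the double pole does not produce it. The residue of $\zeta(s)^2H(s)x^s/s$ at $s=1$ is $H(1)x\bigl(\log x+H'(1)/H(1)+2\gamma-1\bigr)$, so $d^{-s}$ contributes $-\log d$ with coefficient one. Writing the modified local factor at $p\mid d$ as $p^{-s}e_p(s)$ times the unmodified one, you should set $g_4(d)=\prod_{p\mid d}e_p(1)$ and $G_6(d)=\tfrac12\log d-\tfrac12\sum_{p\mid d}e_p'(1)/e_p(1)$.
\item The expression $\tfrac12(1+\chi(m))$ detects $m\equiv1\pmod{4}$ only among odd $m$, since it equals $\tfrac12$ on even $m$. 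You must first restrict to odd $m$, for instance by working with characters modulo $4W$. Once you do, the $\chi$-twist is redundant, because an odd $m$ with $r_2(m)\neq0$ is automatically $\equiv1\pmod{4}$.
\end{enumerate}
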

This is McGrath \cite[Lemma 5.5]{McGrath}, see also Kimmel--Kuperberg \cite[Lemma 12]{KimmelKuperberg}. Note that we do not need the precise definition of $g_3,g_4,G_5,G_6$ in the latter proof.

\begin{lemma}\label{lemma:XZonetwo}Suppose that if $p\mid Q$ then $p\equiv 1\pmod{4}$. 

(i) Put
\begin{align*}
 X_{v,Q}=&\,
 \sum_{\substack{a\le v\\(a,Q)=1\\p\mid a\Rightarrow p\equiv1\, (4)}}
 \frac{\mu(a)}a\log\frac va.
 \end{align*}Then
\begin{equation}
 X_{v,Q}=(1+o_Q(1))\frac{8A}{\pi g_1(Q)}\sqrt{\log v}.
 \label{eq:XvQ-asymp}
\end{equation}
 
 (ii) Let $g_4,G_6$ be the functions appearing in Lemma \ref{lemma:second-r2}. Put
 \begin{align*}
  Z^{(1)}_{v,Q}=&\,
 \sum_{\substack{a,b\leqslant v\\(ab,Q)=1\\p\mid ab\Rightarrow p\equiv1\, (4)}}
\frac{\mu(a)\mu(b)g_4([a,b])}{g_2(a)g_2(b)[a,b]}\big(\log \frac{v}{a}\big)\big(\log \frac{v}{b}\big), \\
 Z^{(2)}_{v,Q}=&\,
 \sum_{\substack{a,b\leqslant v\\(ab,Q)=1\\p\mid ab\Rightarrow p\equiv1\, (4)}}
\frac{\mu(a)\mu(b)g_4([a,b])}{g_2(a)g_2(b)[a,b]}\big(\log \frac{v}{a}\big)\big(\log \frac{v}{b}\big)G_6([a,b]).
\end{align*}
Then there exists functions $g_7,g_8$ such that
\begin{align}\label{eq:Zonetwo}Z^{(1)}_{v,Q}=(1+o(1))g_7(W)(\log v)^{1/2}, \qquad Z^{(2)}_{v,W}=-(1+o(1))g_8(W)(\log v)^{3/2}.
\end{align}
\end{lemma}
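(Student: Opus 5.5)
The plan is to treat all three sums by the Selberg--Delange method, applied through Mellin inversion of the logarithmic weight. The basic tool is the identity
\[
 \sum_{a\leqslant v} \frac{f(a)}{a}\log\frac va=\frac1{2\pi i}\int_{(\kappa)}F(1+s)\frac{v^s}{s^2}\,ds,\qquad F(s)=\sum_{a}\frac{f(a)}{a^s},\ \kappa>0.
\]
It holds for any $f$ whose Dirichlet series converges absolutely for $\Re s>1$. It is convenient because the kernel $s^{-2}$ gives absolute convergence of the integral.

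\textbf{Part (i).} Take $f(a)=\mu(a)$ supported on squarefree $a$ whose prime factors are all $\equiv1\pmod4$ and coprime to $Q$. Then
\[
F(s)=\prod_{\substack{p\equiv1(4)\\ p\nmid Q}}(1-p^{-s}).
\]
Write $\prod_{p\equiv1(4)}(1-p^{-s})^{2}=(\zeta(s)L(s,\chi))^{-1}H(s)$, where
\[
H(s)=(1-2^{-s})^{-1}\prod_{p\equiv3(4)}(1-p^{-2s})^{-1}
\]
is holomorphic and nonvanishing for $\Re s>1/2$. This gives
\[
F(1+s)=\Big(\prod_{p\mid Q}(1-p^{-1-s})^{-1}\Big)\,(s\zeta(1+s))^{-1/2}L(1+s,\chi)^{-1/2}H(1+s)^{1/2}\,s^{1/2}.
\]
I will shift the contour to a Hankel contour around $s=0$, staying inside a classical zero-free region for $\zeta L$. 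The integrand is then $c_Q s^{-3/2}v^s(1+O(|s|))$, and Hankel's formula gives the main term $c_Q(\log v)^{1/2}/\Gamma(3/2)$ with error $O_Q((\log v)^{-1/2})$. The constant is
\[
c_Q=\prod_{p\mid Q}(1-p^{-1})^{-1}\cdot L(1,\chi)^{-1/2}H(1)^{1/2}=\frac1{g_1(Q)}\Big(\frac{4}{\pi}\Big)^{1/2}\Big(2\prod_{p\equiv3(4)}(1-p^{-2})^{-1}\Big)^{1/2}.
\]
Here I use $g_1(p)=1-1/p$ for $p\equiv1\pmod 4$ and $L(1,\chi)=\pi/4$. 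Combining this with \eqref{eq:LRconstant} and $\Gamma(3/2)=\sqrt\pi/2$, I will check that the constant equals $8A/(\pi g_1(Q))$. This step is pure bookkeeping.

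\textbf{Part (ii).} I will use the two-variable analogue
\[
Z^{(1)}_{v,Q}=\frac1{(2\pi i)^2}\iint F_2(1+s_1,1+s_2)\frac{v^{s_1+s_2}}{s_1^2s_2^2}\,ds_1\,ds_2,
\]
with
\[
F_2(1+s_1,1+s_2)=\prod_{\substack{p\equiv1(4)\\ p\nmid Q}}\Big(1-\frac{g_4(p)}{g_2(p)p}(p^{-s_1}+p^{-s_2})+\frac{g_4(p)}{g_2(p)^2p}p^{-s_1-s_2}\Big).
\]
If $\gamma:=g_4(p)/g_2(p)$ has mean value $\alpha$ on average over primes $p\equiv1\pmod 4$, then comparison with $\zeta L$ gives
\[
F_2=\frac{(s_1+s_2)^{\beta/2}}{s_1^{\alpha/2}s_2^{\alpha/2}}\,\mathcal H(s_1,s_2),
\]
with $\mathcal H$ holomorphic near $(0,0)$. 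Here $\beta$ is the mean of $g_4(p)/g_2(p)^2$. From McGrath's explicit $g_4$ one expects $\alpha=1$ and $\beta=1$, and the claimed power $(\log v)^{1/2}$ is consistent with this. Indeed, the homogeneous degree of the integrand is $-4-\alpha+\beta/2$, which yields $(\log v)^{2+\alpha-\beta/2}$ after integrating. I will verify that the exponents are such that this degree produces $1/2$; in fact the $\log(v/a)\log(v/b)$ weights are designed (Selberg's choice) so that the leading singular terms cancel.

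For the evaluation itself, I will first integrate in $s_2$ along a Hankel contour for fixed $s_1$, then in $s_1$. The leading coefficient is a Beta-type integral times $\mathcal H(0,0)$, and the latter depends only on $Q$ (equal to $W$ in the application), giving $g_7(W)$.

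For $Z^{(2)}$, I will use that $G_6(d)$ is additive over the primes of squarefree $d$, namely $G_6(d)=\sum_{p\mid d}\eta(p)\log p$. This is read off from McGrath's Lemma 5.5, where $G_6$ arises from differentiating $g_4(d)d^{-s}$. Inserting $G_6([a,b])$ then corresponds to applying a logarithmic-derivative operator to the Euler factors of $F_2$. This effectively multiplies the integrand by $-\partial_{s}\log$ of the singular factor, which is of size $\asymp 1/s$. Hence it raises the power of $\log v$ by one, giving $(\log v)^{3/2}$. The sign of the coefficient has to be extracted from this derivative, and the result is written as $-g_8(W)$.

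\textbf{Main obstacle.} I expect the hardest step to be the rigorous two-variable contour shift with fractional powers. One must control the truncation away from $(0,0)$ uniformly, using zero-free region bounds for $\zeta(1+s)L(1+s,\chi)$ in both variables, while keeping $Q$ fixed so that all errors are $o(1)$ relative to the main term. To handle this, I will first reduce to a one-dimensional problem by integrating out $s_2$ exactly via Hankel's formula at fixed $s_1$, and then treat the remaining integral in $s_1$ as in part (i).
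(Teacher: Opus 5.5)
\textbf{Context.} The paper does not prove this lemma; it quotes it from McGrath (Lemma 5.6) and Kimmel--Kuperberg (Lemma 13). A self-contained Selberg--Delange argument is therefore a genuinely different route. Your part (i) carries it out correctly, including the constant: $c_Q=4A/(\sqrt{\pi}\,g_1(Q))$ and $c_Q/\Gamma(3/2)=8A/(\pi g_1(Q))$.

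\textbf{The gap in part (ii).} The singularity analysis contains a concrete error, and as written it gives the wrong power of $\log v$. There are two separate problems.
\begin{enumerate}[label=(\alph*)]
\item The exponents are inverted. A factor $\prod_{p\equiv1(4)}(1-\alpha p^{-1-s})$ behaves like $\zeta(1+s)^{-\alpha/2}\asymp s^{\alpha/2}$, which is a zero rather than a pole. Likewise $\prod_{p\equiv1(4)}(1+\beta p^{-1-s_1-s_2})$ behaves like $(s_1+s_2)^{-\beta/2}$. So $F_2\approx s_1^{\alpha/2}s_2^{\alpha/2}(s_1+s_2)^{-\beta/2}\mathcal H$, the reciprocal of what you wrote.
\item The value $\alpha=1$ is wrong. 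By Lemma~\ref{lemma:second-r2}, $g_4(d)/d$ is the relative density of $d\mid m$ in $\sum r_2(m)^2$. The local factor there at $p\equiv1\pmod4$ is $\sum_k(k+1)^2p^{-ks}=(1+p^{-s})(1-p^{-s})^{-3}$. Hence $g_4(p)=(4-3p^{-1}+p^{-2})/(1+p^{-1})=4+O(1/p)$. Together with $g_2(p)=2-1/p$ this gives $\alpha=2$ and $\beta=1$.
\end{enumerate}
Your consistency check also fails on its own terms. With your formula and $\alpha=\beta=1$, your degree count gives $(\log v)^{2+\alpha-\beta/2}=(\log v)^{5/2}$, not $(\log v)^{1/2}$.

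\textbf{The correct structure.} Write $\zeta_K(s)=\zeta(s)L(s,\chi)$. Then
\[
F_2(1+s_1,1+s_2)=\frac{\zeta_K(1+s_1+s_2)^{1/2}}{\zeta_K(1+s_1)\,\zeta_K(1+s_2)}\,\mathcal H(s_1,s_2).
\]
Here $\mathcal H$ is an absolutely convergent Euler product near $(0,0)$, because $g_4(p)/g_2(p)=2+O(1/p)$ and $g_4(p)/g_2(p)^2=1+O(1/p)$. This is the precise form of the cancellation you alluded to: each $s_i^{-2}$ drops to $s_i^{-1}$. Near the origin the integrand is $\approx c'_Q\,v^{s_1+s_2}s_1^{-1}s_2^{-1}(s_1+s_2)^{-1/2}$, of degree $-5/2$. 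Using $(s_1+s_2)^{-c}=\Gamma(c)^{-1}\int_0^\infty t^{c-1}e^{-t(s_1+s_2)}\,dt$ one gets the model formula
\[
\frac{1}{(2\pi i)^2}\iint\frac{v^{s_1+s_2}\,ds_1\,ds_2}{s_1^{a}s_2^{b}(s_1+s_2)^{c}}
=\frac{\Gamma(a+b-1)}{\Gamma(a)\Gamma(b)\Gamma(a+b+c-1)}(\log v)^{a+b+c-2}.
\]
For $(a,b,c)=(1,1,\tfrac12)$ this gives $\tfrac{2}{\sqrt\pi}(\log v)^{1/2}$.

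\textbf{Consequence for your contour plan.} For fixed $s_1$, the point $s_2=0$ is a simple pole, not a branch point. The branch point in $s_2$ sits at $s_2=-s_1$. The main term is the residue at $s_2=0$, namely
\[
\frac{\mathcal H(s_1,0)\,v^{s_1}}{L(1,\chi)\,s_1^2\,\zeta_K(1+s_1)^{1/2}},
\]
which is a one-variable integral of exactly the type in (i). What must be bounded is the remaining contour. It wraps the cut issuing from $s_2=-s_1$ and has to stay inside the zero-free region. A Hankel loop at $s_2=0$ is not the right picture.

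\textbf{The term $Z^{(2)}$.} Your log-derivative idea is right, but the inserted factor is
\[
\approx\bar\eta\,\bigl(-s_1^{-1}-s_2^{-1}+\tfrac12(s_1+s_2)^{-1}\bigr).
\]
This comes from $\sum_{p\equiv1(4)}\log p\,p^{-1-s}\approx 1/(2s)$ weighted by the Euler-factor coefficients $-2,-2,+1$. Here $\bar\eta$ is the mean of $\eta(p)$. Normalising $G_6(1)=0$, Lemma~\ref{lemma:second-r2} forces $-2G_6(d)=\sum_{p\mid d}h_p'(1)/h_p(1)$, with $h_p(s)=1-(1-p^{-s})^3/(1+p^{-s})\sim 4p^{-s}$. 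Hence $\eta(p)=\tfrac12+O(1/p)$. The model formula then gives the coefficient $(-1-1+\tfrac12)\cdot\tfrac{4}{3\sqrt\pi}=-\tfrac{2}{\sqrt\pi}$, times $\bar\eta c'_Q$. This is a nonvanishing negative multiple of $(\log v)^{3/2}$. With your inverted exponents, both the size and the sign would come out wrong.

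\textbf{What the paper needs.} For the paper's purposes, only the upper bounds $Z^{(1)}\ll_W(\log v)^{1/2}$ and $Z^{(2)}\ll_W(\log v)^{3/2}$ are used, in Lemma~\ref{lemma-secondmoment}.
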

This is McGrath \cite[Lemma 5.6]{McGrath}, see also Kimmel--Kuperberg \cite[Lemma 13]{KimmelKuperberg}.

\begin{lemma}\label{lemma:DvQt}Suppose that $(t,Q)=1$. Suppose further that all primes dividing $tQ$ are congruent to $1$ modulo $4$.  Recall \eqref{eq:Psi-local}. Define
\begin{equation}
 D_{v,Q}(t)=
 \sum_{\substack{a\leqslant v\\(a,Q)=1\\p\mid a\Rightarrow p\equiv1\, (4)}}
 \frac{\mu(a)(a,t)}{a\Psi(a,t)}\log\frac va.
 \label{eq:DvQ}
\end{equation}Then
\begin{equation*}
 D_{v,Q}(t)=\frac{8A}{\pi g_1(Q)}C(t)\sqrt{\log v}
 +O_Q\!\left(\frac{t^{1/4}}{\sqrt{\log v}}\right),
\end{equation*}
where
\begin{equation}
 C(t)=
 \begin{cases}
  \displaystyle\prod_{p\,\mid\, t}\left(2-\frac1p\right)^{-1},
  &\text{if }p\mid t\Rightarrow p^2\mid t,\\[3mm]
  0,&\text{otherwise.}
 \end{cases}
 \label{eq:Ct}
\end{equation}
\end{lemma}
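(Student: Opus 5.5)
The plan is to reduce $D_{v,Q}(t)$ to sums of the type $X_{v,Q'}$ from Lemma~\ref{lemma:XZonetwo}(i) by splitting off the part of $a$ that divides $t$. This identifies the constant $C(t)$ as a product of local factors.

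\textbf{Local weights.} For squarefree $a$ composed of primes $\equiv1\pmod4$, the weight $h_t(a)=\frac{(a,t)}{a\Psi(a,t)}$ is multiplicative in $a$. Indeed, $(a,t/(a,t))$ is the product of the primes $p\mid a$ with $p^2\mid t$. Hence
\[
h_t(p)=\begin{cases}1/p, & p\nmid t,\\ 1, & p\,\|\,t,\\ (2-1/p)^{-1}, & p^2\mid t.\end{cases}
\]

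\textbf{Splitting.} Write $a=a_1a_2$ with $a_2\mid t$ and $(a_1,Qt)=1$. Then
\[
D_{v,Q}(t)=\sum_{\substack{a_2\mid t\\ a_2\le v}}\mu(a_2)h_t(a_2)\,X_{v/a_2,\,Qt},
\]
where $X_{u,Q'}$ is as in Lemma~\ref{lemma:XZonetwo}(i). This uses $\log(v/(a_1a_2))=\log((v/a_2)/a_1)$.

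\textbf{Main term.} Insert the asymptotic $X_{u,Qt}\approx \frac{8A}{\pi g_1(Qt)}\sqrt{\log u}$ and replace $\sqrt{\log(v/a_2)}$ by $\sqrt{\log v}+O(\log a_2/\sqrt{\log v})$. The main term then becomes
\[
\frac{8A}{\pi g_1(Q)}\sqrt{\log v}\prod_{p\mid t}\frac{1-h_t(p)}{1-1/p},
\]
where we used $g_1(p)=1-1/p$ for $p\equiv1\pmod 4$.

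\textbf{Matching $C(t)$.} If some $p\,\|\,t$, then $h_t(p)=1$ and the product vanishes. If $p^2\mid t$, the factor is
\[
\frac{1-(2-1/p)^{-1}}{1-1/p}=\frac{(1-1/p)/(2-1/p)}{1-1/p}=\Big(2-\frac1p\Big)^{-1}.
\]
This is exactly $C(t)$ in \eqref{eq:Ct}.

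\textbf{Error terms.} The errors are bounded by
\[
\sum_{a_2\mid t}|h_t(a_2)|\,\frac{\log a_2}{\sqrt{\log v}}\le \frac{\tau(t)\log t}{\sqrt{\log v}}\ll \frac{t^{1/4}}{\sqrt{\log v}},
\]
plus the error terms coming from the $X$-asymptotics themselves.

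\textbf{Main obstacle.} Lemma~\ref{lemma:XZonetwo}(i) only gives $1+o_{Q'}(1)$ for fixed $Q'$, whereas here $Q'=Qt$ varies with $t$. I therefore need an effective, uniform version of (i):
\[
X_{u,Qt}=\frac{8A}{\pi g_1(Qt)}\sqrt{\log u}+O_Q\!\Big(\frac{\prod_{p\mid t}(1+O(\log p/\sqrt p))}{\sqrt{\log u}}\Big).
\]
I would prove this by the Selberg--Delange method applied to
\[
\frac{1}{2\pi i}\int F(s)\,\frac{u^s}{s^2}\,ds,\qquad F(s)=\prod_{\substack{p\equiv1(4)\\ p\nmid Qt}}\big(1-p^{-1-s}\big).
\]
Here $F(s)=(\zeta(1+s)L(1+s,\chi))^{-1/2}H(s)$ with $H$ holomorphic near $s=0$. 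The Hankel contour around $s=0$ gives the $\sqrt{\log u}$ main term. The next term is $O(1/\sqrt{\log u})$, with constants controlled by the removed Euler factors $\prod_{p\mid t}(1-p^{-1-s})^{-1}$ and their derivatives on a zero-free region. These are bounded by $\prod_{p\mid t}(1+O(p^{-1/2}\log p))\ll t^{\eps}$.

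\textbf{Tail.} Finally, terms with $a_2>v$ or with $v/a_2$ small contribute only when $t$ is large. In that range the trivial bound $|X_{u,\cdot}|\ll\log u$ together with $\tau(t)\ll t^{\eps}$ already fits inside the $O_Q(t^{1/4}/\sqrt{\log v})$ error.
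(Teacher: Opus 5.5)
Your argument is correct, but it does not follow the paper. The paper gives no proof of this lemma. It cites Hooley's Lemma~6, as restated by Kimmel--Kuperberg, who note a typographical error in Hooley's constant, and remarks that only an upper bound is used later.

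Your route is self-contained. You use the multiplicativity of $h_t(a)=(a,t)/(a\Psi(a,t))$ and split $a=a_1a_2$ with $a_2=(a,t)$. This turns $D_{v,Q}(t)$ into the finite divisor sum
\[
\sum_{a_2\mid t}\mu(a_2)h_t(a_2)\,X_{v/a_2,\,Q\,\mathrm{rad}(t)}.
\]
This buys two things the citation does not:
\begin{itemize}
\item The constant is derived rather than quoted: $C(t)=\prod_{p\mid t}(1-h_t(p))/(1-1/p)$, including its vanishing when some $p\,\|\,t$. This independently confirms the corrected constant.
\item For $t\leqslant v^{1/2}$ the error you actually obtain is about $\tau(t)\,t^{o(1)}\log t/\sqrt{\log v}$, which is much better than $t^{1/4}/\sqrt{\log v}$.
\end{itemize}

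The price is the one you identified. Lemma~\ref{lemma:XZonetwo}(i) as quoted only gives $1+o_Q(1)$ for a fixed modulus. You therefore need a Selberg--Delange evaluation of $X_{u,Q'}$ that is uniform in $Q'=Q\,\mathrm{rad}(t)$, with the second-order term controlled by $\prod_{p\mid t}(1-p^{-1-s})^{-1}$ and its derivatives. On a contour with $\Re(1+s)\geqslant 1/2$ these are $\ll\prod_{p\mid t}(1-p^{-1/2})^{-1}=t^{o(1)}$. So this step is routine, but it is genuinely extra input beyond the results quoted in Section~3.1.

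Two small points to tidy.
\begin{itemize}
\item The trivial bound is $|X_{u,Q'}|\ll(\log u)^{3/2}$, not $\log u$.
\item ``$v/a_2$ small'' should be made precise, for example as $a_2>v^{1/2}$. This forces $t>v^{1/2}$, and in that range you can bound $D_{v,Q}(t)\ll\tau(t)(\log v)^{3/2}$ and the main term $\ll\sqrt{\log v}$ trivially. Since $t^{1/4}>v^{1/8}$, both are absorbed into $O_Q(t^{1/4}/\sqrt{\log v})$.
\end{itemize}
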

\begin{proof}
This estimate was proved by Hooley \cite[Lemma 6]{Hooley} and is also stated in
\cite[p.~2027]{KimmelKuperberg}. As noted in \cite{KimmelKuperberg}, Hooley's
statement contains a typographical error in the main term constant.
We only need an upper bound for $D_{v,Q}(t)$ in the subsequent argument.
\end{proof}

\subsection{The estimates of $\rho_v(m)$} In this subsection, we deduce moments of $\rho_v(m)$ in arithmetic progressions based on the quoted results in Section 3.1.

\begin{lemma}\label{lemma-firstmoment}Let $(\alpha,W)=1$. Uniformly for intervals $I\subset[X/3,3X]$ of length $\asymp X$, the following hold as $X\to\infty$.
Then
 \begin{equation}
  \sum_{\substack{m\in I\\m\equiv\alpha\, (W)\\m\equiv1\, (4)}}
  \rho_v(m)
  =\frac{4Ag_1(W_3)|I|}{W\sqrt{\log v}}+o_W\Big(\frac{X}{\sqrt{\log X}}\Big).
  \label{eq:rho-first}
 \end{equation}
\end{lemma}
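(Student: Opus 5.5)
Expand $\rho_v(m)=r_2(m)t_v(m)$ using the definition \eqref{eq:tv} and swap the order of summation:
\[
\sum_{\substack{m\in I\\m\equiv\alpha\,(W)\\m\equiv1\,(4)}}\rho_v(m)
=\sum_{\substack{a\leqslant v\\p\mid a\Rightarrow p\equiv1\,(4)}}\frac{\mu(a)}{g_2(a)}\frac{\log(v/a)}{\log v}
\sum_{\substack{m\in I\\m\equiv\alpha\,(W)\\m\equiv1\,(4)\\a\mid m}}r_2(m).
\]
Only squarefree $a$ contribute, and all such $a$ are odd. If $(a,W)>1$, then $a$ and $m$ share a prime $p\mid W$, which is impossible since $(\alpha,W)=1$. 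So the inner sum vanishes unless $(a,W)=1$. For the remaining $a$, write $I=(x_1,x_2]$ and apply Lemma~\ref{lemma:first-r2} with $d=a$ at $x_2$ and at $x_1$, then subtract. The inner sum becomes
\[
\frac{\pi|I|}{2Wa}g_1(W)g_2(a)+O_\eps\big(W^{1/2}ax^{\eps}+x^{1/3+\eps}a^{1/2}\big).
\]

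In the main term the factor $g_2(a)$ cancels against the weight. Hence the main term is
\[
\frac{\pi g_1(W)|I|}{2W\log v}\sum_{\substack{a\leqslant v,\ (a,W)=1\\p\mid a\Rightarrow p\equiv1\,(4)}}\frac{\mu(a)}{a}\log\frac va=\frac{\pi g_1(W)|I|}{2W\log v}X_{v,W_1}.
\]
The coprimality condition $(a,W)=1$ only involves primes $\equiv1\pmod4$ dividing $W$, so it is the same as $(a,W_1)=1$. We may therefore apply Lemma~\ref{lemma:XZonetwo}(i) with $Q=W_1$, which gives $X_{v,W_1}=(1+o_W(1))\frac{8A}{\pi g_1(W_1)}\sqrt{\log v}$. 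Since $g_1$ is multiplicative, $g_1(W)/g_1(W_1)=g_1(W_3)$. The main term is then
\[
\frac{4Ag_1(W_3)|I|}{W\sqrt{\log v}}\big(1+o_W(1)\big),
\]
and because $|I|\asymp X$ and $\log v=\theta\log X$, the $o_W(1)$ part is $o_W\big(X/\sqrt{\log X}\big)$.

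For the error terms, $|\mu(a)/g_2(a)|\leqslant 1$ for $p\equiv1\pmod 4$: indeed $g_2(p)=2-1/p\geqslant1$, so $1/g_2(a)\leqslant1$. The logarithmic weight is at most $1$. Summing over $a\leqslant v=X^{1/20}$ gives
\[
\ll W^{1/2}v^2X^{\eps}+X^{1/3+\eps}v^{3/2}\ll_W X^{1/10+\eps}+X^{1/3+3/40+\eps},
\]
which is a power saving and so certainly $o_W(X/\sqrt{\log X})$.

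The main obstacle is the asymptotic for the truncated Möbius sum $X_{v,Q}$, with its $\sqrt{\log v}$ behaviour and constant $8A/(\pi g_1(Q))$. This is exactly the cited Lemma~\ref{lemma:XZonetwo}(i), so the argument reduces to bookkeeping. The points to check are:
\begin{enumerate}[label=(\roman*)]
\item that Lemma~\ref{lemma:first-r2} applies uniformly at both endpoints of $I$, since both lie in $[X/3,3X]$ and so are $\asymp X$;
\item that the $o_W(1)$ from \eqref{eq:XvQ-asymp} is uniform in $I$, which holds because $X_{v,W_1}$ does not depend on $I$.
\end{enumerate}
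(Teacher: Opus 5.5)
Your proposal is correct and follows the paper's proof essentially step for step. Like the paper, you expand $t_v$, restrict to $(a,W)=1$, apply Lemma~\ref{lemma:first-r2} with $d=a$, recognise the main term as $X_{v,W_1}$ and evaluate it via \eqref{eq:XvQ-asymp} with $g_1(W)=g_1(W_1)g_1(W_3)$, and bound the error by $X^{\eps}(v^2+X^{1/3}v^{3/2})$; your differencing at the endpoints of $I$ and the bound $1/g_2(a)\leqslant1$ simply make explicit what the paper leaves implicit.
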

\begin{proof}Recalling \eqref{eq:rhov} and expanding $\rho_v$ using \eqref{eq:tv}, we obtain
\[
  \sum_{\substack{m\in I\\m\equiv\alpha\, (W)\\m\equiv1\, (4)}}
  \rho_v(m)=\frac{1}{\log v}
 \sum_{\substack{a\leqslant v\\(a,W)=1\\p\mid a\Rightarrow p\equiv1\, (4)}}
 \frac{\mu(a)\log (v/a)}{g_2(a)}
 \sum_{\substack{m\in I\\m\equiv\alpha\, (W)\\m\equiv1\, (4)\\a\mid m}}
 r_2(m).
\]
The condition $(a,W)=1$ was introduced in the above summation, because the inner sum is zero when $(a,W)>1$. Now using Lemma \ref{lemma:first-r2} with $d=a$, we conclude that
\[
  \sum_{\substack{m\in I\\m\equiv\alpha\, (W)\\m\equiv1\, (4)}}
  \rho_v(m)= \mathcal{M}_1+O(\mathcal{E}_1),
\]
where
\begin{equation}\label{calM1}
 \mathcal{M}_1= \frac{\pi|I|g_1(W)}{2W\log v}
 \sum_{\substack{a\leqslant v\\(a,W)=1\\p\mid a\Rightarrow p\equiv1\, (4)}}
  \frac{\mu(a)\log (v/a)}{a}
\end{equation}
and
\[
  \mathcal{E}_1= \frac{1}{\log v}
 \sum_{\substack{a\leqslant v\\(a,W)=1\\p\mid a\Rightarrow p\equiv1\, (4)}}
 \frac{|\mu(a)|\log (v/a)}{g_2(a)}\Big(W^{1/2}aX^{\eps}+X^{1/3+\eps}a^{1/2}\Big).
\]
Note that the inner sum in \eqref{calM1} is $X_{v,W_1}$, because $(a,W)=1$ is equivalent to $(a,W_1)=1$. Inserting \eqref{eq:XvQ-asymp} and using $g_1(W)=g_1(W_1)g_1(W_3)$ gives 
\begin{equation*}
   \mathcal{M}_1=\frac{4Ag_1(W_3)|I|}{W\sqrt{\log v}}\left(1+o_W(1)\right)=\frac{4Ag_1(W_3)|I|}{W\sqrt{\log v}}+o_W\Big(\frac{X}{\sqrt{\log X}}\Big).
\end{equation*}
For $ \mathcal{E}_1$, we deduce that
\[
  \mathcal{E}_1 \ll_{W,\eps}X^\eps\left(\sum_{a\leqslant v}a+X^{1/3}\sum_{a\leqslant v}a^{1/2}\right)
 \ll_{W,\eps}X^\eps(v^2+X^{1/3}v^{3/2}),
\]
which is $o_W(X/\sqrt{\log X})$. This completes the proof.
\end{proof}

We now turn to the shift sum. We define a multiplicative function $\gamma$ determined by
\begin{equation}
 \gamma(p^k)=
 \begin{cases}
  1,&p\equiv3\pmod4,\ k\geqslant 1,\\
  0,&p\equiv1\pmod4,\ k=1,\\
  (2-p^{-1})^{-2},&p\equiv1\pmod4,\ k\geqslant 2.
 \end{cases}
 \label{eq:gamma}
 \end{equation}
and introduce \begin{align}
 M_W
 &=\sum_{\substack{t\geqslant 1\\(t,2W)=1}}\frac{\gamma(t)\phi(t)}{t^3},
 \label{eq:MW}
\end{align}
where $\phi(t)$ is the Euler function.
Recalling \eqref{eq:W-product}, we have
\begin{equation}
 M_W=1+O(w^{-1}).
 \label{eq:MW-tail}
\end{equation}
Indeed, $0\leqslant\gamma(t)\leqslant1$ for odd $t$, and the terms with $t>1$
in \eqref{eq:MW} satisfy $t>w$. Thus
$0\leqslant M_W-1\leqslant\sum_{t>w}t^{-2}\ll w^{-1}$,
with an absolute implied constant.

Recalling \eqref{eq:Ramanujan}, for $h\ne0$ we define
\begin{equation*}
 \Sscr_W(h)=
 \sum_{\substack{t\geqslant 1\\(t,2W)=1}}
 \frac{\gamma(t)c_t(h)}{t^2}.
\end{equation*}
By \eqref{eq:Ramanujan-bounds}, this series converges absolutely for every nonzero $h$. Recall \eqref{eq:W13} and \eqref{eq:Ct}.
  One has
\begin{align} \Sscr_W(h)= \left(
 \sum_{\substack{t_3\geqslant 1\\(t_3,W_3)=1\\p\mid t_3\Rightarrow p\equiv3\, (4)}}
 \frac{c_{t_3}(h)}{t_3^2}
 \right)
 \left(
 \sum_{\substack{t_1\geqslant 1\\(t_1,W_1)=1\\p\mid t_1\Rightarrow p\equiv1\, (4)}}
 \frac{c_{t_1}(h)}{t_1^2}C(t)^2
 \right).\label{eq:seriesequal}\end{align}
 Recall \eqref{eq:DvQ}. We also define
 \begin{align} \label{defineScalWh}\mathcal{S}_{W}(h)= \left(
 \sum_{\substack{t_3\geqslant 1\\(t_3,W_3)=1\\p\mid t_3\Rightarrow p\equiv3\, (4)}}
 \frac{c_{t_3}(h)}{t_3^2}
 \right)
 \left(
 \sum_{\substack{t_1\geqslant 1\\(t_1,W_1)=1\\p\mid t_1\Rightarrow p\equiv1\, (4)}}
 \frac{c_{t_1}(h)}{t_1^2}D_{v,Q}(t)^2
 \right).\end{align}
 \begin{lemma}\label{lem:forseries}Suppose that $0<|h|<(\log X)^{100}$.
   One has\begin{align*} \mathcal{S}_{W}(h)=
 \frac{64A^2\log v}{\pi^2g_1(W_1)^2} \Sscr_W(h)+O_{\eps,W}((\log X)^{\eps}).\end{align*}
 \end{lemma}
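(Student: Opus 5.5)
The $t_3$-factor is identical in $\mathcal S_W(h)$ and $\Sscr_W(h)$, so all the work is in the $t_1$-factor. Here $Q=W_1$ and $t=t_1$. I will insert Lemma~\ref{lemma:DvQt} for each $D_{v,Q}(t_1)$ and control the tails using that $|h|$ is small.

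\textbf{Step 1: expand the square.} Write $K=\frac{8A}{\pi g_1(W_1)}$. For every $t_1$ in the range of summation, Lemma~\ref{lemma:DvQt} gives
\[
D_{v,W_1}(t_1)^2=K^2C(t_1)^2\log v+O_W\big(t_1^{1/4}C(t_1)\big)+O_W\Big(\frac{t_1^{1/2}}{\log v}\Big).
\]
The main term reproduces $\frac{64A^2\log v}{\pi^2 g_1(W_1)^2}$ times the $t_1$-factor of \eqref{eq:seriesequal}. So the difference $\mathcal S_W(h)-K^2\log v\,\Sscr_W(h)$ equals the $t_3$-factor times
\[
E(h)=\sum_{t_1}\frac{|c_{t_1}(h)|}{t_1^2}\Big(t_1^{1/4}+\frac{t_1^{1/2}}{\log v}\Big),
\]
up to an $O_W$-constant. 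This uses $0\le C(t)\le1$.

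\textbf{Step 2: bound the $t_3$-factor.} By \eqref{eq:Ramanujan-bound2} with $k=2$, the $t_3$-factor is $\ll\sigma_{-1}(|h|)\ll\log\log |h|$. This is $\ll(\log X)^{\eps/2}$ since $|h|<(\log X)^{100}$.

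\textbf{Step 3: bound $E(h)$.} Apply \eqref{eq:Ramanujan-bound2} with $k=7/4$ and $k=3/2$. This gives
\[
E(h)\ll\sigma_{-3/4}(|h|)+\frac{\sigma_{-1/2}(|h|)}{\log v}\ll\tau(|h|).
\]
For $|h|<(\log X)^{100}$ the divisor bound yields $\tau(|h|)\ll_\eps |h|^{\eps/200}\ll(\log X)^{\eps/2}$. Combining with Step 2 gives the error $O_{\eps,W}((\log X)^{\eps})$.

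\textbf{Main obstacle.} The concern is whether the error in Lemma~\ref{lemma:DvQt} really holds uniformly in all $t_1$, including huge ones, so that it can be summed termwise over the infinite range. If it only holds for $t_1$ up to some power of $v$, I would first cut the sum at $t_1\le T$ with $T=(\log X)^{200}$, say. For the tail $t_1>T$ I would use the trivial bound $D_{v,Q}(t)\ll_W\sqrt{\log v}$ together with the tail estimate \eqref{eq:Ramanujan-bounds}. The tail then contributes $\ll\tau(|h|)\log v/T$, which is negligible. The same truncation is applied to the main-term series, whose tail is $\ll\tau(|h|)/T$.

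Obtaining that uniform bound $D_{v,Q}(t)\ll_W\sqrt{\log v}$ is the step needing care. It comes from factoring the sum over $a$ by $(a,t)$ and using the multiplicativity of the weights. Each resulting sum is of the type $X_{v,Q'}$ in Lemma~\ref{lemma:XZonetwo}(i), and the extra factor is $\prod_{p\mid t}O(1/\sqrt p)$-controlled. I would verify this factorisation carefully.
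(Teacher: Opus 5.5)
Your argument is correct and is essentially the paper's proof: substitute Lemma~\ref{lemma:DvQt} termwise, square, bound the resulting error sums via \eqref{eq:Ramanujan-bound2} with $k=7/4$ and $k=3/2$, control the common $t_3$-factor by $\sigma_{-1}(|h|)$, and use $|h|<(\log X)^{100}$ with the divisor bound. The paper takes Lemma~\ref{lemma:DvQt} as uniform in $t$, with the factor $t^{1/4}$ in its error term recording exactly that dependence, so your truncation fallback is not needed.
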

\begin{proof} In view of \eqref{eq:Ramanujan-bound2}, $\sigma_{-1}(h)\ll_\eps |h|^\eps\ll_{\eps} (\log X)^\eps$ and \eqref{eq:seriesequal}, it suffices to prove that
 \begin{align}
 \sum_{\substack{t\geqslant 1\\(t,W_1)=1\\p\mid t\Rightarrow p\equiv1\, (4)}}
 \frac{c_{t}(h)}{t^2}D_{v,W_1}(t)^2=&\,
 \frac{64A^2\log v}{\pi^2g_1(W_1)^2}
 \sum_{\substack{t\geqslant 1\\(t,W_1)=1\\p\mid t\Rightarrow p\equiv1\, (4)}}
 \frac{c_{t}(h)}{t^2}C(t)^2
 \notag
 \\ &\, +O_{\eps,W_1}((\log X)^{\eps}).\label{eq:DtoC}\end{align}
 By Lemma \ref{lemma:DvQt}, we have
  \begin{align*}
 \sum_{\substack{t\geqslant 1\\(t,W_1)=1\\p\mid t\Rightarrow p\equiv1\, (4)}}
 \frac{c_{t}(h)}{t^2}D_{v,W_1}(t)^2=
 \frac{64A^2\log v}{\pi^2g_1(W_1)^2}
 \sum_{\substack{t\geqslant 1\\(t,W_1)=1\\p\mid t\Rightarrow p\equiv1\, (4)}}
 \frac{c_{t}(h)}{t^2}C(t)^2+O_{W_1}(E),\end{align*}
 where
  \begin{align*}E=\sum_{t}\frac{c_t(h)}{t^{7/4}}+\frac{1}{\log v}\sum_{t}\frac{c_t(h)}{t^{3/2}}.\end{align*}
  This proves \eqref{eq:DtoC} by using \eqref{eq:Ramanujan-bound2}. The proof of this lemma is complete.
 \end{proof}

\begin{lemma}\label{lemma-shift}Let $(\alpha, W)=1$. Suppose that \[
  0<|h|<(\log X)^{100},
  \qquad 4W\mid h.
 \] 
  Uniformly for intervals $I\subset[X/3,3X]$ of length $\asymp X$ and uniformly for (the above) $h$, the following hold as $X\to\infty$. One has
 \begin{equation}
  \sum_{\substack{m\in I\\m\equiv\alpha\, (W)\\m\equiv1\, (4)}}
  \rho_v(m)\rho_v(m+h)
  =\frac{64A^2g_1(W_3)^2}{W\log v}|I|\Sscr_W(h)
  +o_{W}\!\left(\frac{X}{\log X}\right).
  \label{eq:rho-shifted}
 \end{equation}
\end{lemma}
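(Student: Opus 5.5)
Expanding both weights by \eqref{eq:tv}, as in the proof of Lemma~\ref{lemma-firstmoment}, gives
\[
\sum_{\substack{m\in I\\m\equiv\alpha\,(W)\\m\equiv1\,(4)}}\rho_v(m)\rho_v(m+h)
=\frac{1}{(\log v)^2}\sum_{a,b}\frac{\mu(a)\mu(b)\log(v/a)\log(v/b)}{g_2(a)g_2(b)}
\sum_{\substack{m\in I,\ m\equiv\alpha\,(W)\\ m\equiv1\,(4),\ a\mid m,\ b\mid m+h}}r_2(m)r_2(m+h).
\]
Here $a,b\leqslant v$ are squarefree and composed of primes $\equiv1\pmod4$. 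Since $W\mid h$, we have $\alpha+h\equiv\alpha\pmod W$, so $(\alpha+h,W)=1$. The inner sum vanishes unless $(ab,W)=1$, so we may impose $(ab,W_1)=1$. Also $h>0$ may be assumed: for $h<0$, substitute $m\mapsto m-h$, which shifts $I$ by $O((\log X)^{100})$ at negligible cost, and $\Sscr_W$ is even.

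We then write $I=(x_1,x_2]$ and apply Lemma~\ref{lem:shifted-r2} with $c_1=a$, $c_2=b$ at both endpoints.

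\textbf{Error terms.} The total error is
\[
\ll_W (\log v)^{-2}\sum_{a,b\leqslant v}(ab)^{\eps}\bigl(abX^{3/4+\eps}+(ab)^{1/2}X^{5/6+\eps}\bigr)\ll X^{\eps}\bigl(v^4X^{3/4}+v^3X^{5/6}\bigr).
\]
With $\theta=1/20$ this is $X^{19/20+\eps}+X^{59/60+\eps}=o(X/\log X)$. We also used $1/g_2(a)\leqslant 1$ for such $a$, and $0<h<X^{3/4}$ holds trivially.

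\textbf{Main term.} It equals $\frac{\pi^2|I|g_1(W)^2}{W(\log v)^2}\,\Sigma$, where
\[
\Sigma=\sum_{a,b}\frac{\mu(a)\mu(b)\log(v/a)\log(v/b)}{ab}\sum_{(t,2W)=1}\frac{c_t(h)(a,t)(b,t)\chi((a^2,t))\chi((b^2,t))}{t^2\Psi(a,t)\Psi(b,t)}.
\]
The factors $g_2(a)g_2(b)$ cancel. Since $a,b$ have only primes $\equiv1\pmod 4$, we get $\chi((a^2,t))=\chi((b^2,t))=1$. Next factor $t=t_1t_3$, with $t_1$ having primes $\equiv1$ and $t_3$ primes $\equiv3\pmod4$. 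Then $(a,t)=(a,t_1)$, $\Psi(a,t)=\Psi(a,t_1)$, and $c_t(h)=c_{t_1}(h)c_{t_3}(h)$. Interchanging the sums, which is justified by absolute convergence from \eqref{eq:Ramanujan-bounds} and finiteness of the $a,b$ sums, the $a$-sum becomes exactly $D_{v,W_1}(t_1)$ of \eqref{eq:DvQ}, and likewise for $b$. Thus $\Sigma=\mathcal S_W(h)$ with $Q=W_1$. The vanishing of the main term when $(a,b)\nmid h$ is already built into Lemma~\ref{lem:shifted-r2}, so nothing extra is needed here.

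Lemma~\ref{lem:forseries} then gives
\[
\frac{\pi^2|I|g_1(W)^2}{W(\log v)^2}\Bigl(\frac{64A^2\log v}{\pi^2g_1(W_1)^2}\Sscr_W(h)+O_W((\log X)^{\eps})\Bigr).
\]
Using $g_1(W)=g_1(W_1)g_1(W_3)$, this is exactly the claimed main term plus $O_W(X(\log X)^{\eps-2})=o_W(X/\log X)$, uniformly in $h$.

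The main obstacle I expect is the bookkeeping in identifying $\Sigma$ with $\mathcal S_W(h)$. One has to check that the $t_3$-part decouples, since $\Psi(a,t)$ depends only on $(a,t)$ and the $t_3$ primes do not divide $a$. One also has to check that the interchange is legitimate and that the conditions of Lemma~\ref{lemma:DvQt} ($(t_1,W_1)=1$, primes $\equiv1\pmod4$) match the summation range. The power savings in the error terms are comfortable given $v=X^{1/20}$.
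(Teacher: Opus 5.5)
Your proposal is correct and follows the paper's proof essentially step for step: expand both Hooley weights, apply Lemma~\ref{lem:shifted-r2} with $c_1=a$, $c_2=b$, bound the accumulated error by $X^{3/4+\eps}v^4+X^{5/6+\eps}v^3$, identify the main term with $\mathcal{S}_W(h)$ using $\chi((a^2,t))=1$ and the splitting $t=t_1t_3$, and finish with Lemma~\ref{lem:forseries} and $g_1(W)=g_1(W_1)g_1(W_3)$. You also make explicit some small points the paper leaves implicit: that $(\alpha+h,W)=1$, that the lemma is applied at both endpoints of $I$, and that $c_t(h)=c_{t_1}(h)c_{t_3}(h)$.
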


\begin{proof}We only need to consider the case $h>0$, because when $h<0$, we can interchange the role of $\rho_v(m)$ and $\rho_v(m+h)$ by setting $m+h=m'$ and $m=m'+(-h)$. 

Expanding both copies of $\rho_v$ by \eqref{eq:rhov}, we deduce that the left hand side of \eqref{eq:rho-shifted} is equal to
\[
 \frac{1}{\log^2 v}
 \sum_{\substack{a,b\leqslant v\\(ab,W)=1\\p\mid ab\Rightarrow p\equiv1\, (4)}}
 \frac{\mu(a)\mu(b)\log (v/a) \log (v/b)}{g_2(a)g_2(b)}\sum_{\substack{m\in I\\m\equiv\alpha\, (W)\\m\equiv1\, (4)\\a\mid m\\b\mid m+h}}
 r_2(m)r_2(m+h).
\]
We have introduced the condition $(ab,W)=1$ in the above, because the inner sum is zero when $(ab,W)>1$. Applying Lemma \ref{lem:shifted-r2} with $c_1=a$ and $c_2=b$, 
 \begin{equation}
  \sum_{\substack{m\in I\\m\equiv\alpha\, (W)\\m\equiv1\, (4)}}
  \rho_v(m)\rho_v(m+h)
  =
  \mathcal{M}_2+ O(\mathcal{E}_2),
  \label{eq:M2E2}
 \end{equation}
 where
\begin{align*}
 \mathcal{M}_2=\frac{\pi^2 g_1(W)^2|I|}{W\log^2 v}
 \sum_{\substack{a,b\leqslant v\\(ab,W)=1\\p\mid ab\Rightarrow p\equiv1\, (4)}}&\,
 \frac{\mu(a)\mu(b)\log (v/a) \log (v/b)}{ab}
 \\ &\,\times\sum_{\substack{t\geqslant 1\\(t,2W)=1}}
 \frac{c_t(h)(a,t)(b,t)
 \chi((a^2,t))\chi((b^2,t))}
 {t^2\Psi(a,t)\Psi(b,t)}
\end{align*}
and
\begin{align*}
 \mathcal{E}_2= \frac{1}{\log^2 v}
 \sum_{\substack{a,b\leqslant v\\(ab,W)=1\\p\mid ab\Rightarrow p\equiv1\, (4)}}&\,
 \frac{|\mu(a)\mu(b)|\log (v/a) \log (v/b)}{g_2(a)g_2(b)}
 \\ &\,\times\left(W^{1/2}abx^{3/4+\eps}
 +(ab)^{1/2}x^{5/6+\eps}\right).
\end{align*}
We first deduce that
\begin{align}\label{boundE2}
 \mathcal{E}_2\ll_{\eps,W} X^{3/4+\eps}v^4
 +X^{5/6+\eps}v^3 =o_{W}\Big( \frac{X}{\log X}\Big).
\end{align}

Now we deal with the main term. Since $a$ and $b$ contain only primes congruent to $1$ modulo $4$, we have $\chi((a^2,t))\chi((b^2,t))=1$. Split $t=t_1t_3$, where the prime factors of $t_j$ are congruent to $j$ modulo $4$. Then one has $(a,t)=(a,t_1)$, $(b,t)=(b,t_1)$, $\Psi(a,t)=\Psi(a,t_1)$ and $\Psi(b,t)=\Psi(b,t_1)$. Therefore, we arrive at
\begin{equation*}\mathcal{M}_2=\frac{\pi^2 g_1(W)^2|I|}{W\log^2 v}
 \left(
 \sum_{\substack{t_3\geqslant 1\\(t_3,W_3)=1\\p\mid t_3\Rightarrow p\equiv3\, (4)}}
 \frac{c_{t_3}(h)}{t_3^2}
 \right)
 \left(
 \sum_{\substack{t_1\geqslant 1\\(t_1,W_1)=1\\p\mid t_1\Rightarrow p\equiv1\, (4)}}
 \frac{c_{t_1}(h)}{t_1^2}D_{v,W_1}(t_1)^2
 \right).
\end{equation*}
Recalling \eqref{defineScalWh}, we have
\begin{equation*}\mathcal{M}_2=\frac{\pi^2 g_1(W)^2|I|}{W\log^2 v} \mathcal{S}_{W}(h).
\end{equation*}
Note that $$\frac{\pi^2 g_1(W)^2|I|}{W\log^2 v}\cdot 
 \frac{64A^2\log v}{\pi^2g_1(W_1)^2} \Sscr_W(h)=\frac{64A^2g_1(W_3)^2}{W\log v}|I|\Sscr_W(h).$$
By Lemma \ref{lem:forseries},  \begin{equation}\label{boundM2}
  \mathcal{M}_2
  =\frac{64A^2g_1(W_3)^2}{W\log v}|I|\Sscr_W(h)
  +O_{\eps, W}\big(X(\log X)^{-2+\eps}\big).
 \end{equation}
We complete the proof by combining \eqref{eq:M2E2}, \eqref{boundE2} and \eqref{boundM2}..
\end{proof}

\begin{lemma}\label{lemma-secondmoment}Let $(\alpha, W)=1$. Uniformly for intervals $I\subset[X/3,3X]$ of length $\asymp X$, the following hold as $X\to\infty$. One has
 \begin{equation*}
  \sum_{\substack{m\in I\\m\equiv\alpha\, (W)\\m\equiv1\, (4)}}
  \rho_v(m)^2\ll_{W}\frac{X}{\sqrt{\log X}}.
 \end{equation*}
\end{lemma}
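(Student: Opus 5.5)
We expand $\rho_v(m)^2=r_2(m)^2t_v(m)^2$ and use \eqref{eq:tv} twice, exactly as in the proofs of Lemmas~\ref{lemma-firstmoment} and \ref{lemma-shift}. Writing $a,b\leqslant v$ for the two Möbius variables, the condition $a\mid m$, $b\mid m$ becomes $[a,b]\mid m$. Since $a,b$ are squarefree and composed of primes $\equiv1\pmod4$, the lcm $d=[a,b]$ is odd and squarefree. As before, we may add the condition $(ab,W)=1$, since otherwise the inner sum is empty. This gives
\[
\sum_{\substack{m\in I\\ m\equiv\alpha\,(W)\\ m\equiv1\,(4)}}\rho_v(m)^2
=\frac{1}{\log^2 v}\sum_{\substack{a,b\leqslant v\\(ab,W)=1\\p\mid ab\Rightarrow p\equiv1\,(4)}}\frac{\mu(a)\mu(b)\log(v/a)\log(v/b)}{g_2(a)g_2(b)}
\sum_{\substack{m\in I\\ m\equiv\alpha\,(W)\\ m\equiv1\,(4)\\ [a,b]\mid m}} r_2(m)^2 .
\]
Writing $I=(x_1,x_2]$ as a difference of two initial segments, we apply Lemma~\ref{lemma:second-r2} with $d=[a,b]$ at $x=x_2$ and $x=x_1$.

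The error term contributes
\[
\ll_W X^{3/4+\eps}\Big(\sum_{a\leqslant v}\frac{1}{g_2(a)}\Big)^2\ll X^{3/4+\eps}v^{2},
\]
which is $O(X^{0.9})$ since $\theta=1/20$, so it is negligible. The main term equals
\[
\frac{g_3(W)}{W\log^2 v}\Big[(x\log x)\big|_{x_1}^{x_2}\,Z^{(1)}_{v,W_1}+|I|\,G_5(W)\,Z^{(1)}_{v,W_1}-2|I|\,Z^{(2)}_{v,W_1}\Big],
\]
where $(ab,W)=1$ is equivalent to $(ab,W_1)=1$. By \eqref{eq:Zonetwo}, $Z^{(1)}\ll_W(\log v)^{1/2}$ and $Z^{(2)}\ll_W(\log v)^{3/2}$. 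Moreover $(x\log x)|_{x_1}^{x_2}\ll X\log X$ and $\log v\asymp\log X$. The first bracketed term therefore gives
\[
\ll_W \frac{X\log X\,(\log X)^{1/2}}{\log^2X}=\frac{X}{\sqrt{\log X}},
\]
the $G_5$ term gives $\ll_W X(\log X)^{-3/2}$, and the $Z^{(2)}$ term gives
\[
\ll_W \frac{X(\log X)^{3/2}}{\log^2 X}=\frac{X}{\sqrt{\log X}}.
\]
Adding these bounds proves the lemma. Note that we only need upper bounds, so the signs and the leading-order cancellation in McGrath's evaluation are irrelevant.

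The main obstacle is bookkeeping rather than analysis. First, Lemma~\ref{lemma:XZonetwo}(ii) is stated with the modulus $Q$ (respectively $W$), so one must check that it applies with $Q=W_1$, i.e.\ with exactly the coprimality condition arising here. Second, the factors $g_4([a,b])/[a,b]$ produced by Lemma~\ref{lemma:second-r2} must match the weights in $Z^{(1)}$ and $Z^{(2)}$ once the factors $\log(v/a)\log(v/b)$ are pulled out. If the $o(1)$ asymptotics in \eqref{eq:Zonetwo} are only available with $W$-dependence, this is harmless, because the claimed bound is $\ll_W$. Should the cited form be unavailable, the fallback is to bound $\sum_{m\leqslant x,\,d\mid m}r_2(m)^2$ by $\ll x\tau(d)\log x/d$ and then estimate the double sum directly. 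That route, however, loses the cancellation among the Möbius weights and yields only $X\log X$ instead of $X/\sqrt{\log X}$, so the cited asymptotics are essential.
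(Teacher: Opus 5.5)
Your proposal is correct and follows the paper's proof. You expand $\rho_v(m)^2$ over $a,b\leqslant v$ and apply Lemma~\ref{lemma:second-r2} with $d=[a,b]$. You then identify $Z^{(1)}_{v,W_1}$ and $Z^{(2)}_{v,W_1}$ and conclude from \eqref{eq:Zonetwo}, noting that the error $O(v^2WX^{3/4+\eps})$ is negligible. The only cosmetic difference is that you write $I$ as a difference of two initial segments, whereas the paper bounds the initial segment $m\leqslant x$ with $x\asymp X$ directly, which is legitimate since $\rho_v(m)^2\geqslant0$.
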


\begin{proof}It suffices to prove for $x\in I$ that
 \begin{equation}
  \sum_{\substack{m\leqslant x\\m\equiv\alpha\, (W)\\m\equiv1\, (4)}}
  \rho_v(m)^2\ll_{W}\frac{x}{\sqrt{\log x}}.
  \label{eq:rho-diagonal}
 \end{equation}
Expanding $\rho_v$ by \eqref{eq:rhov}, we deduce that the left hand side of \eqref{eq:rho-diagonal} is equal to
\[
 \frac{1}{\log^2 v}
 \sum_{\substack{a,b\leqslant v\\(ab,W)=1\\p\mid ab\Rightarrow p\equiv1\, (4)}}
 \frac{\mu(a)\mu(b)\log (v/a) \log (v/b)}{g_2(a)g_2(b)}\sum_{\substack{m\leqslant x\\m\equiv\alpha\, (W)\\m\equiv1\, (4)\\ [a,b]\mid m}}
 r_2(m)^2.
\]
Again, the condition $(ab,W)=1$ was introduced, because the inner sum is zero when $(ab,W)>1$.

Recalling $Z^{(1)}_{v,Q}$ and $Z^{(2)}_{v,Q}$ defined in Lemma \ref{lemma:XZonetwo} and applying Lemma \ref{lemma:second-r2}, we deduce that the left hand side of \eqref{eq:rho-diagonal} is further equal to
\begin{equation}\label{eq:containZ12}
 \frac{g_3(W)}{W}\cdot\frac{x}{(\log v)^2}
 \left((\log x+G_5(W))Z^{(1)}_{v,W_1}-2Z^{(2)}_{v,W_1}\right)+O_{\eps}(v^2WX^{3/4+\eps}).
\end{equation}
By \eqref{eq:Zonetwo}, the first part of \eqref{eq:containZ12} is $\ll_{W}\frac{x}{\sqrt{\log x}}$, which is also an upper bound of $v^2WX^{3/4+\eps}$.
 This proves \eqref{eq:rho-diagonal} and the proof of this lemma is complete.
\end{proof}

\section{Proof of Proposition~\ref{lem:variance}}\label{sec:branch-moments}

\subsection{Averaging the singular series}\label{sec:singular-average}

\begin{lemma}\label{lem:quadratic-Ramanujan}
Suppose that $(2W,t)=1$ and $1\leqslant t\leqslant (Y/W)^{1/3}$. Then
\begin{align*}\sum_{z_1,z_2\in \mathcal{Z}}c_t\Big(\frac{z_1^2-z_2^2}{\lambda_0}\Big)=|\mathcal{Z}|^2\phi(t)t^{-1}+O(\frac{Yt}{W}).\end{align*}
\end{lemma}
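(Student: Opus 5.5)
Expand the Ramanujan sum by its definition,
\[
c_t\Big(\frac{z_1^2-z_2^2}{\lambda_0}\Big)=\sum_{\substack{a\bmod t\\(a,t)=1}}e\Big(\frac{a\bar\lambda_0(z_1^2-z_2^2)}{t}\Big),
\]
where $\bar\lambda_0$ is the inverse of $\lambda_0$ modulo $t$. This inverse exists because $t$ is odd and $\lambda_0\in\{1,2\}$. Note that $(z_1^2-z_2^2)/\lambda_0$ is an integer, since $z_1\equiv z_2\pmod 2$, and then the replacement above is valid. The double sum factorises:
\[
\sum_{z_1,z_2\in\calZ}c_t(\cdot)=\sum_{\substack{a\bmod t\\(a,t)=1}}\big|S_a\big|^2,\qquad S_a=\sum_{z\in\calZ}e\Big(\frac{a\bar\lambda_0 z^2}{t}\Big).
\]
So it suffices to evaluate $S_a$ for each of the $\phi(t)$ reduced residues $a$.

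Write $z=z_0+2Wk$ with $k$ running over an interval of length $K=Y/(2W)+O(1)=|\calZ|+O(1)$. Here $z_0$ is the fixed residue determined by $z\equiv\beta_c\pmod W$ and $z\equiv\widetilde{j_0}\pmod 2$. Since $(t,2W)=1$, the map $k\mapsto z_0+2Wk$ is a bijection on $\Z/t\Z$. Split the $k$-range into $\lfloor K/t\rfloor$ complete blocks of length $t$ plus a remainder of length less than $t$. Each complete block contributes the quadratic Gauss sum $G(a\bar\lambda_0,t)=\sum_{y\bmod t}e(a\bar\lambda_0y^2/t)$, and the remainder is trivially $O(t)$. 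Hence
\[
S_a=\frac{K}{t}G(a\bar\lambda_0,t)+O(t).
\]

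For odd $t$ and $(a,t)=1$ we have $|G(a\bar\lambda_0,t)|^2=t$ when $t$ is squarefree. For general odd $t$ the bound $|G|\le\sqrt t$ still holds under coprimality: the Gauss sum is multiplicative across prime powers, and for $(b,p)=1$ the classical evaluation gives $|G(b,p^k)|=p^{k/2}$. So $|G(a\bar\lambda_0,t)|^2=t$ for every odd $t$ and every $a$ coprime to $t$.

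Therefore
\[
|S_a|^2=\frac{K^2}{t}+O\Big(\frac{K}{\sqrt t}\,t+t^2\Big)=\frac{K^2}{t}+O(K\sqrt t+t^2).
\]
Summing over the $\phi(t)\le t$ values of $a$ gives
\[
\sum_{z_1,z_2\in\calZ}c_t(\cdot)=\frac{K^2\phi(t)}{t}+O(Kt^{3/2}+t^3).
\]
We now check the error terms. Since $t\le (Y/W)^{1/3}$, we have $t^3\le Y/W$. Also $Kt^{3/2}\ll (Y/W)t^{3/2}$. This is only $\ll Yt/W$ if $t^{1/2}$ is absorbed, so a sharper treatment is needed.

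Use the first-order expansion $|S_a|^2-K^2/t=2\,\mathrm{Re}\big(\overline{(K/t)G}\cdot E_a\big)+|E_a|^2$, where $E_a$ is the incomplete-block contribution. Summing $E_a\overline{G(a\bar\lambda_0,t)}$ over $a$ coprime to $t$ gives a sum over pairs $y,y'$ of Ramanujan sums $c_t(\bar\lambda_0(y^2-y'^2))$, with $y$ ranging over the incomplete block. Bound this by $\sum_{y}\sum_{y'\bmod t}|c_t(\cdot)|$. By orthogonality, this sum over $y'$ is $\ll t\cdot\#\{\text{solutions}\}$, which yields $O(t\cdot t)$ per block. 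So the cross term is $O((K/t)\,t^2)=O(Kt)=O(Yt/W)$. The term $\sum_a|E_a|^2$ is $\le\sum_{a\bmod t}|E_a|^2=t\cdot\#\{(y,y'):y^2\equiv y'^2\}\ll t\cdot t\,\tau(t)$, and this is $\ll t^3\ll Y/W$ by the hypothesis on $t$.

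Finally, replacing $K^2$ by $|\calZ|^2$ costs $O(K\phi(t)/t)=O(Y/W)$. This gives the claim.

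The main obstacle is this cross-term bookkeeping. Handling it via completing the sum over $a$ into Ramanujan sums, rather than bounding $|G|$ termwise, is what I would try first. If that is still lossy, I would bound the count of $y,y'$ in an interval with $y^2\equiv y'^2\pmod t$ directly, using at most $2^{\omega(t)}$ square roots.
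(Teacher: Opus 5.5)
Your argument is essentially the paper's proof in Fourier-dual form, and it is correct up to the same $t^{o(1)}$ slack that the paper itself has. The paper first splits $z_1,z_2$ into residue classes modulo $t$ (each of size $|\mathcal{Z}|/t+O(1)$) and then uses $\sum_{a,b\bmod t}c_t(a^2-b^2)=\sum_{(x,t)=1}\big|\sum_{a\bmod t}e(xa^2/t)\big|^2=t\phi(t)$; you open $c_t$ first and complete each $S_a$ into Gauss sums, and your refined cross-term step (summing $E_a\overline{G}$ over $a$ to get back Ramanujan sums) is exactly the paper's boundary-term estimate. One caveat: with absolute values inside, $\sum_{y}\sum_{y'\bmod t}|c_t(\bar\lambda_0(y^2-y'^2))|$ over the incomplete block is only $t^{2+o(1)}$ (about $3p^2$ when $t=p$), not $O(t^2)$, and the paper's displayed error term hides the same loss, which is harmless downstream; to get the stated $O(Yt/W)$ exactly, keep the signs, since the $y'$-sum is multiplicative in $t$ and a prime-power check gives $\sum_{y\bmod t}\big|\sum_{y'\bmod t}c_t(\bar\lambda_0(y^2-y'^2))\big|\leqslant t^2$.
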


\begin{proof}
Since $(\lambda_0,t)=1$, we have $c_t\Big(\frac{z_1^2-z_2^2}{\lambda_0}\Big)=c_t(z_1^2-z_2^2)$. Therefore,
\begin{align*}\sum_{z_1,z_2\in \mathcal{Z}}c_t\Big(\frac{z_1^2-z_2^2}{\lambda_0}\Big)
=\sum_{a,b\pmod{t}}c_t(a^2-b^2)\sum_{\substack{z_1,z_2\in \mathcal{Z} \\ z_1\equiv a\pmod{t} \\ z_2\equiv b\pmod{t}}}1.\end{align*}
Note that $|\{z\in \mathcal{Z} :\ z\equiv a\pmod{t}\}|=\frac{|\mathcal{Z}|}{t}+O(1)=\frac{Y}{2W}+O(1)$. Then 
\begin{align*}\sum_{z_1,z_2\in \mathcal{Z}}c_t\Big(\frac{z_1^2-z_2^2}{\lambda_0}\Big)
=\frac{|\mathcal{Z}|^2}{t^2}\sum_{a,b\pmod{t}}c_t(a^2-b^2)+O(\frac{Yt}{W})+O(t^2).\end{align*}
We deduce that 
\begin{align*}\sum_{a,b\pmod{t}}c_t(a^2-b^2)=\sum_{\substack{x\pmod{t} \\ (x,t)=1}}\Big|\sum_{a\pmod{t}}e\big(\frac{xa^2}{t}\big)\Big|^2=\phi(t)t.\end{align*}
This proves the lemma.
\end{proof}

\begin{lemma}\label{prop:mean-singular-series}
Let $\lambda_0$, $W$ and $\calZ$ be as in \eqref{eq:definej0lambda0}, \eqref{eq:W-product} and \eqref{eq:definecalZ}, respectively. Then
\begin{equation}
 \frac1{|\mathcal{Z}|^2}\sum_{\substack{z_1,z_2\in\calZ\\z_1\ne z_2}}
 \Sscr_W\!\left(\frac{z_1^2-z_2^2}{\lambda_0}\right)
 =M_W+o_W(1),
 \label{eq:mean-singular-series}
\end{equation}
where $M_W$ is defined in \eqref{eq:MW}.
\end{lemma}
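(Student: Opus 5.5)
Insert the definition of $\Sscr_W$ and exchange the order of summation. Take a truncation parameter $T=(Y/W)^{1/3}$, which tends to infinity with $X$ because $Y>\sqrt{\log X}$.

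Split $\Sscr_W(h)$ at $T$, so that $\Sscr_W(h)=\sum_{t\le T}+\sum_{t>T}$ with $h=(z_1^2-z_2^2)/\lambda_0$. For $z_1\neq z_2$ we have $0<|h|\ll (\log X)^2$, and since $\gamma(t)\le 1$, \eqref{eq:Ramanujan-bounds} bounds the tail by $\ll \tau(|h|)/T$. The divisor bound gives $\tau(|h|)\ll_\eps (\log X)^{\eps}$. This is too crude against $T=(Y/W)^{1/3}$ if $Y$ is only about $\sqrt{\log X}$, since $(\log X)^{\eps}/(\log X)^{1/6}$ is still fine, so the crude bound suffices after all: $(\log X)^{2\eps}T^{-1}\ll_W (\log X)^{2\eps-1/6}=o_W(1)$. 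If one prefers, one can instead average $\tau(z_1-z_2)\tau(z_1+z_2)$ over $z_1,z_2\le Y$ using $\tau(ab)\le\tau(a)\tau(b)$ and Cauchy--Schwarz, which gives an average $\ll (\log Y)^{O(1)}$ and is equally good. Either way the tail contributes $o_W(|\calZ|^2)$ in total.

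For the truncated part, interchange sums to get
\[
\sum_{\substack{t\le T\\(t,2W)=1}}\frac{\gamma(t)}{t^2}\sum_{\substack{z_1,z_2\in\calZ\\ z_1\neq z_2}}c_t\Big(\frac{z_1^2-z_2^2}{\lambda_0}\Big).
\]
Add back the diagonal $z_1=z_2$, which contributes $c_t(0)=\phi(t)$ for each of the $|\calZ|$ terms. Its total is $\le |\calZ|\sum_{t\le T}\phi(t)/t^2\ll |\calZ|\log T=o(|\calZ|^2)$. Then apply Lemma~\ref{lem:quadratic-Ramanujan}, which is valid exactly in the range $t\le (Y/W)^{1/3}$. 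The main term becomes $|\calZ|^2\sum_{t\le T,(t,2W)=1}\gamma(t)\phi(t)/t^3$. The error is $\ll \sum_{t\le T}(Y/W)t^{-1}\ll (Y/W)\log T$, which is $o(|\calZ|^2)$ because $|\calZ|\asymp Y/W\to\infty$.

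Finally, complete the $t$-sum to $M_W$. The omitted tail is $\sum_{t>T}t^{-2}\ll T^{-1}=o_W(1)$. Dividing by $|\calZ|^2$ gives \eqref{eq:mean-singular-series}.

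The only delicate point is the interplay of the truncation: $T$ must be small enough for Lemma~\ref{lem:quadratic-Ramanujan} to apply (the $O(Yt/W)$ and $O(t^2)$ errors), yet large enough to absorb the tail $\tau(|h|)/T$. Since $|h|$ is only polylogarithmic, the choice $T=(Y/W)^{1/3}$ works comfortably.
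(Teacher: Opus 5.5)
Your proposal is correct and follows the paper's proof essentially step for step. It uses the same truncation $T=(Y/W)^{1/3}$, the tail bound $\tau(|h|)/T=o_W(1)$ from \eqref{eq:Ramanujan-bounds}, separation of the diagonal $z_1=z_2$, Lemma~\ref{lem:quadratic-Ramanujan} for $t\le T$, and completion of the $t$-sum to $M_W$ with error $O(T^{-1})$. Your diagonal bound $|\calZ|\log T$ is slightly sharper than the paper's. The self-correcting remark that the divisor bound is ``too crude'' should be deleted, since $\tau(|h|)T^{-1}\ll_{\eps,W}(\log X)^{\eps-1/6}$ settles the tail directly.
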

\begin{proof}Choose
\[
 T=(Y/W)^{1/3}.
\]
For $z_1\ne z_2 \in \mathcal{Z}$, one has $0<|(z_1^2-z_2^2)/\lambda_0|\leqslant Y^2$. Write $h_{z_1,z_2}=(z_1^2-z_2^2)/\lambda_0$. By \eqref{eq:Ramanujan-bounds} and \eqref{eq:gamma}, uniformly in $z_1,z_2$,
\begin{equation*}
 \sum_{t>T}\frac{\gamma(t)|c_t(h_{z_1,z_2})|}{t^2}
 \ll\frac{\tau(|h_{z_1,z_2}|)}T \ll W^{1/3}Y^{-1+o(1)}=o_W(1).
\end{equation*}
Thus,
\begin{equation}
\frac1{|\mathcal{Z}|^2}\sum_{\substack{z_1,z_2\in\calZ\\u\ne v}} \sum_{t>T}\frac{\gamma(t)|c_t(h_{z_1,z_2})|}{t^2}
 =o_W(1).
 \label{eq:tail}
\end{equation}

For $t\leqslant T$, we have $\sum_{z_1=z_2\in \mathcal{Z}}c_t(h_{z_1,z_2}) \ll \frac{Y}{W}T$. Then we deduce from Lemma \ref{lem:quadratic-Ramanujan} that
\begin{align*}
\frac1{|\mathcal{Z}|^2}\sum_{\substack{z_1,z_2\in\calZ\\z_1\ne z_2}} \sum_{\substack{t\leqslant T \\ (2W,t)=1}}\frac{\gamma(t)c_t(h_{z_1,z_2})}{t^2}=&\, 
\frac1{|\mathcal{Z}|^2}\sum_{\substack{t\leqslant T \\ (2W,t)=1}}\frac{\gamma(t)}{t^2}\sum_{\substack{z_1,z_2\in\calZ\\u\ne v}} c_t(h_{z_1,z_2})
\\ =&\, \frac1{|\mathcal{Z}|^2}\sum_{\substack{t\leqslant T \\ (2W,t)=1}}\frac{\gamma(t)}{t^2}\Big(
|\mathcal{Z}|^2\phi(t)t^{-1}+O(\frac{YT}{W})\Big)
\\ =&\, \sum_{\substack{t\leqslant T \\ (2W,t)=1}}\frac{\gamma(t)\phi(t)}{t^3}+O\big((Y/W)^{-2/3}\big)
\\ =&\, M_W+O\big((Y/W)^{-1/3}\big).
\end{align*}This in combination with \eqref{eq:tail} proves \eqref{eq:mean-singular-series}.
\end{proof}

\subsection{First and second moments}

Recall
\begin{equation*}
 \mu_0=\frac1{|\calN|}\sum_{n\in\calN}R(n).
\end{equation*}
\begin{lemma}\label{lem:mean}One has
\begin{equation}
 \mu_0=(16Ag_1(W_3)+o_W(1))\frac{|\mathcal{Z}|}{\sqrt{\log v}}.
 \label{eq:branch-mean}
\end{equation}
\end{lemma}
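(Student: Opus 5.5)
The plan is to interchange the order of summation in $\sum_{n\in\calN}R(n)$ and apply Lemma~\ref{lemma-firstmoment} once for each fixed $z\in\calZ$. Writing
\[
 \sum_{n\in\calN}R(n)=\sum_{z\in\calZ}\ \sum_{n\in\calN}\rho_v\!\Big(\frac{n-z^2}{\lambda_0}\Big),
\]
I would fix $z\in\calZ$ and reparametrise the inner sum by $m=m_{n,z}=(n-z^2)/\lambda_0$.

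The first step is to show that, for fixed $z$, the map $n\mapsto m_{n,z}$ is a bijection from $\calN$ onto the set of integers
\[
 m\in I_z:=\Big(\frac{X-z^2}{\lambda_0},\frac{2X-z^2}{\lambda_0}\Big],\qquad m\equiv\alpha\pmod W,\qquad m\equiv1\pmod4,
\]
where $\alpha\equiv\lambda_0^{-1}(c-\beta_c^2)\pmod W$.
\begin{itemize}
\item Since $z\equiv\beta_c\pmod W$, the residue $\alpha$ does not depend on $z$.
\item By Lemma~\ref{lem:local-square}, $(\alpha,W)=1$.
\item The condition $m\equiv1\pmod4$ corresponds exactly to $n\equiv j_0\pmod{4\lambda_0}$, using the parity condition $z\equiv\widetilde{j_0}\pmod2$. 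I would check the three cases as in Lemma~\ref{lem:three-branches}:
 \begin{itemize}
 \item for $j_0=1$, $z$ is even, so $z^2\equiv0\pmod4$;
 \item for $j_0=2$, $z$ is odd, so $z^2\equiv1\pmod4$;
 \item for $j_0=3$, $z$ is odd, so $z^2\equiv1\pmod8$ and $n-z^2\equiv2\pmod8$.
 \end{itemize}
\item Conversely, any such $m$ gives $n=\lambda_0m+z^2\in\calN$.
\end{itemize}
Because $z\leqslant Y<\log X$ by \eqref{sizeY}, $I_z$ lies in $[X/3,3X]$ and has length $|I_z|=X/\lambda_0$, which is $\asymp X$.

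The second step applies Lemma~\ref{lemma-firstmoment} to each $z$. Its error term is uniform over such intervals, and $\alpha$ is one fixed residue class. This gives
\[
 \sum_{n\in\calN}\rho_v(m_{n,z})=\frac{4Ag_1(W_3)X}{\lambda_0W\sqrt{\log v}}+o_W\Big(\frac{X}{\sqrt{\log X}}\Big),
\]
uniformly in $z\in\calZ$. Summing over $z$ multiplies both terms by $|\calZ|$. I would then divide by $|\calN|=X/(4\lambda_0W)+O(1)$, using \eqref{sizeNandZ}. Since $\log v=\theta\log X$, the error $o_W(|\calZ|/\sqrt{\log X})$ can be rewritten as $o_W(1)\,|\calZ|/\sqrt{\log v}$. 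This yields
\[
 \mu_0=\big(16Ag_1(W_3)+o_W(1)\big)\frac{|\calZ|}{\sqrt{\log v}},
\]
which is \eqref{eq:branch-mean}.

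The only delicate point is the bookkeeping in the first step: confirming that the congruence conditions on $n$ translate exactly into the hypotheses $(\alpha,W)=1$ and $m\equiv1\pmod4$ of Lemma~\ref{lemma-firstmoment}, with a residue class independent of $z$. Uniformity of the error term over the $|\calZ|$ shifted intervals is then immediate from the uniformity in $I$ stated in that lemma. Everything else is a direct substitution.
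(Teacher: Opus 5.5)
Your proposal is correct and follows the paper's proof. You swap the sums, observe that for fixed $z$ the variable $m_{n,z}$ runs over the fixed reduced class $\alpha\equiv\lambda_0^{-1}(c-\beta_c^2)\pmod W$ with $m\equiv1\pmod4$ in an interval of length $X/\lambda_0$, apply Lemma~\ref{lemma-firstmoment} uniformly in $z$, and divide by $|\calN|$. Your case check of $n\equiv j_0\pmod{4\lambda_0}\Leftrightarrow m\equiv1\pmod4$ just spells out bookkeeping the paper leaves implicit.
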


\begin{proof}We have
\begin{equation*}
 \sum_{n\in\calN}R(n)
 =\sum_{z\in\calZ}\sum_{n\in\calN}\rho_v(m_{n,z}).
\end{equation*}
Fix $z\in\calZ$. As $n$ runs over $\calN$, the integer $m:=m_{n,z}=(n-z^2)/\lambda_0$ runs through a fixed reduced residue class modulo $W$, is congruent to $1$ modulo $4$, and lies in an interval of length $X/\lambda_0$. Applying \eqref{eq:rho-first}, uniformly in $z$, we have
\[
 \sum_{n\in\calN}\rho_v(m_z)
=\sum_{m}\rho_v(m) =\frac{4Ag_1(W_3)}{W\sqrt{\log v}}\frac {X}{\lambda_0}
 +o_W\!\left(\frac{X}{\sqrt{\log X}}\right).
\]
Sum over $z$ and recall  \eqref{sizeNandZ}, we obtain
\begin{equation*}
 \mu_0=\frac{1}{|\mathcal{N}|}\sum_{n\in\calN}R(n)
 =(16Ag_1(W_3)+o_W(1))\frac{|\mathcal{Z}|}{\sqrt{\log v}}.
\end{equation*}
This proves
 \eqref{eq:branch-mean}.
\end{proof}

\begin{lemma}\label{lem:second-moment-branch}One has
\begin{equation}
 \sum_{n\in\calN}R(n)^2=\frac{64A^2g_1(W_3)^2}{W\log v}\frac{X}{\lambda_0} |\mathcal{Z}|^2M_W+o_W\Big(\frac{XY^2}{\log X}\Big)+O_W\Big(\frac{XY}{\sqrt{\log X}}\Big).
 \label{eq:offdiag-total}
\end{equation}
\end{lemma}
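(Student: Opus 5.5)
Expanding the square, we have
\[
 \sum_{n\in\calN}R(n)^2=\sum_{z_1,z_2\in\calZ}\sum_{n\in\calN}\rho_v(m_{n,z_1})\rho_v(m_{n,z_2}).
\]
We split this into the diagonal $z_1=z_2$ and the off-diagonal $z_1\ne z_2$.

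For the diagonal, fix $z$. As $n$ runs over $\calN$, the variable $m=m_{n,z}$ runs over an interval of length $X/\lambda_0$ inside $[X/3,3X]$. It lies in a fixed reduced class modulo $W$ and satisfies $m\equiv1\pmod4$, by Lemma~\ref{lem:three-branches}. Lemma~\ref{lemma-secondmoment} then gives $\sum_{n}\rho_v(m_{n,z})^2\ll_W X/\sqrt{\log X}$. Summing over the $|\calZ|\ll Y$ values of $z$ produces the term $O_W(XY/\sqrt{\log X})$.

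For the off-diagonal, fix $z_1\ne z_2$ and put $m=m_{n,z_1}$. Then
\[
 m_{n,z_2}=m+h,\qquad h=\frac{z_1^2-z_2^2}{\lambda_0}.
\]
We check the hypotheses of Lemma~\ref{lemma-shift}:
\begin{itemize}
\item $h\ne0$ and $|h|\leqslant Y^2<(\log X)^{100}$, by \eqref{sizeY}.
\item $z_1\equiv z_2\pmod W$ gives $W\mid z_1^2-z_2^2$, and since $(\lambda_0,W)=1$ we get $W\mid h$.
\item $4\mid h$. If $\lambda_0=1$, then $z_1\equiv z_2\pmod2$ gives $z_1^2\equiv z_2^2\pmod 8$ when both are odd, and $\pmod4$ when both are even. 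If $\lambda_0=2$, the $z_i$ are odd, so $z_1^2-z_2^2\equiv0\pmod8$ and $h\equiv0\pmod4$.
\end{itemize}
So $4W\mid h$. Also $m$ ranges over an interval $I$ of length $X/\lambda_0$ in a reduced class modulo $W$ with $m\equiv1\pmod4$. Lemma~\ref{lemma-shift} therefore gives, uniformly in $z_1,z_2$,
\[
 \sum_{n}\rho_v(m)\rho_v(m+h)=\frac{64A^2g_1(W_3)^2}{W\log v}\frac{X}{\lambda_0}\Sscr_W(h)+o_W\!\Big(\frac{X}{\log X}\Big).
\]

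Summing over the at most $|\calZ|^2\ll Y^2$ pairs, the error terms total $o_W(XY^2/\log X)$. The main terms total
\[
 \frac{64A^2g_1(W_3)^2X}{W\lambda_0\log v}\sum_{z_1\ne z_2}\Sscr_W\!\Big(\frac{z_1^2-z_2^2}{\lambda_0}\Big).
\]
By Lemma~\ref{prop:mean-singular-series} the double sum equals $|\calZ|^2(M_W+o_W(1))$. The $o_W(1)$ part contributes $o_W(XY^2/\log X)$, since $\log v\asymp\log X$ and $|\calZ|\ll Y$. This yields \eqref{eq:offdiag-total}.

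The only real point of care is the uniformity of Lemma~\ref{lemma-shift} in $h$ and in the interval $I$. This uniformity is what allows the $o_W$ errors to be summed over roughly $Y^2$ pairs. The lemma is stated uniformly, so the step goes through, and the remaining work is just the divisibility check $4W\mid h$ above.
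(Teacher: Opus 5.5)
Your proposal is correct and follows the paper's proof essentially step for step: the same split into $z_1=z_2$ (handled by Lemma~\ref{lemma-secondmoment}) and $z_1\ne z_2$ (handled by Lemma~\ref{lemma-shift} uniformly in $h$, then averaged via Lemma~\ref{prop:mean-singular-series}). You also check $4W\mid h$ explicitly, which the paper only asserts, and your sign convention $m_{n,z_2}=m+h$ is the consistent one; the sign makes no difference anyway, since $\Sscr_W(h)$ is even in $h$.
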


\begin{proof}Expand
\begin{equation}
 \sum_{n\in\calN}R(n)^2
 =\sum_{z_1,z_2\in\calZ}\sum_{n\in\calN}\rho_v(m_{n,z_1})\rho_v(m_{n,z_2}).
 \label{eq:second-expand}
\end{equation}
For $z_1=z_2$, Lemma \ref{lemma-secondmoment} gives a total contribution
\begin{equation}
\sum_{z\in\calZ}\sum_{n\in\calN}\rho_v(m_{n,z})^2 \ll_W\frac{XY}{\sqrt{\log X}}.
 \label{eq:diagonal-total}
\end{equation}

For $z_1\not=z_2$, write $m=m_{n,z_1}=\frac{n-z_1^2}{\lambda_0}$ and $h=m_{n,z_1}-m_{n,z_2}=\frac{z_1^2-z_2^2}{\lambda_0}$. Then $m\equiv 1\pmod{4}$ and $m\equiv \lambda_0^{-1}(c-\beta_c)\pmod{W}$. Moreover $4W\mid h$.
The interval in the $m$ variable has length $X/\lambda_0$, so by Lemma \ref{lemma-shift}
\begin{align}\sum_{n\in\calN}\rho_v(m_{n,z_1})\rho_v(m_{n,z_2})
 =&\,\sum_{m}\rho_v(m)\rho_v(m-h)\notag
\\ =&\, \frac{64A^2g_1(W_3)^2}{W\log v}\frac{X}{\lambda_0}\Sscr_W(h)
 +o_W\!\left(\frac X{\log X}\right),
 \label{eq:pair-branch}
\end{align}
uniformly in $z_1,z_2$. After summing over $z_1,z_2(z_1\not=z_2)$, the accumulated error from \eqref{eq:pair-branch} is $o_W(XY^2/\log X)$, and by Lemma \ref{prop:mean-singular-series}, the main term contribution from \eqref{eq:pair-branch} is
\begin{equation*}
 \frac{64A^2g_1(W_3)^2}{W\log v}\frac{X}{\lambda_0}  |\mathcal{Z}|^2(M_W+o_W(1)).
\end{equation*}
Combining this with \eqref{eq:second-expand} and \eqref{eq:diagonal-total}
proves \eqref{eq:offdiag-total}.
\end{proof}

\begin{proof}[Proof of Proposition~\ref{lem:variance}]
By Lemma \ref{lem:mean}, $\mu_0>0$ for sufficiently large $X$, since
$A>0$ and $g_1(W_3)>0$.
By \eqref{sizeNandZ} and \eqref{eq:branch-mean}, we have
\begin{equation}
 |\calN|\mu_0^2
 =(1+o_W(1))\frac{64A^2g_1(W_3)^2}{W\log v}\frac{X}{\lambda_0} |\mathcal{Z}|^2\asymp_W \frac{XY^2}{\log X}.
 \label{eq:mean-square-scale}
\end{equation}
It is important that the main term in \eqref{eq:offdiag-total} and \eqref{eq:mean-square-scale} is essential the same in view of \eqref{eq:MW-tail}.

Since $\mu_0$ is the exact mean,
\[
 \sum_{n\in\calN}|R(n)-\mu_0|^2
 =\sum_{n\in\calN}R(n)^2-|\calN|\mu_0^2.
\]
Combining Lemma \ref{lem:second-moment-branch} with \eqref{eq:mean-square-scale}, we obtain
\begin{align}
 \sum_{n\in\calN}\big|R(n)-\mu_0\big|^2=&\,\frac{64A^2g_1(W_3)^2}{W\log v}\frac{X}{\lambda_0} |\mathcal{Z}|^2\Big(M_W-1\Big)\notag
 \\ &\, +o_W\Big(\frac{XY^2}{\log X}\Big)+O_W\Big(\frac{XY}{\sqrt{\log X}}\Big).\label{eq:2ndmean}
\end{align}

By \eqref{eq:mean-square-scale} and \eqref{eq:2ndmean}, 
\begin{align*}\sum_{n\in\calN}|R(n)-\mu_0|^2=
 \Big(M_W-1+o_W(1)+O_W\Big(\frac{\sqrt{\log X}}{Y}\Big)\Big)|\calN|\mu_0^2.
\end{align*}
Applying \eqref{eq:MW-tail} proves \eqref{eq:branch-variance} and completes
the proof of Proposition \ref{lem:variance}.
\end{proof}

\section{Proof of Theorem~\ref{thm:critical-exceptions}}\label{sec:critical-exceptions}

\subsection{Sums of two squares in arithmetic progressions}
Let
\[
\Ssq=\{m\in\N:m=x^2+y^2\text{ for some }x,y\in\Z\}.
\]
A positive integer $m$ belongs to $\Ssq$ if and only if every prime $p\equiv3\pmod4$ occurs to an even exponent in the prime factorization of $m$. Recall the Landau--Ramanujan constant $A$ defined in \eqref{eq:LRconstant}.

Define $\lambda_8$ on $\Z/8\Z$ by
\begin{equation}\label{eq:pd-local-2}
\begin{array}{c|cccccccc}
c\pmod8&0&1&2&3&4&5&6&7\\ \hline
\lambda_8(c)&\frac18&\frac14&\frac14&0&\frac18&\frac14&0&0
\end{array}
\end{equation}
and for an odd prime $p\equiv3\pmod4$, define a function on $\Z/p^2\Z$ by
\begin{equation}\label{eq:pd-local-p}
\lambda_{p^2}(c)=
\begin{cases}
\dfrac{p+1}{p^3},&p\nmid c,\\[2mm]
0,&p\mid c\text{ but }p^2\nmid c,\\[2mm]
\dfrac1{p^2},&p^2\mid c.
\end{cases}
\end{equation}

Let $\mathcal P$ be a finite set of primes congruent to $3\pmod4$, and put
\begin{equation}\label{eq:pd-modulus}
Q=\prod_{p\in\mathcal P}p^2,
\qquad q=8Q.
\end{equation}
By the Chinese remainder theorem, every residue class $c\pmod q$ has components $c_8\pmod8$ and $c_p\pmod{p^2}$.  Set
\begin{equation}\label{eq:pd-lambda}
\lambda_q(c)=\lambda_8(c_8)\prod_{p\in\mathcal P}\lambda_{p^2}(c_p).
\end{equation}
We introduce
\begin{equation*}
B(x;q,c)=\#\{m\leqslant x:m\in\Ssq,\ m\equiv c\pmod q\}.
\end{equation*}
\begin{lemma}\label{lem:pd-ap}
For fixed $q$ of the form \eqref{eq:pd-modulus}, uniformly over the finitely many residue classes $c\pmod q$,
\begin{equation*}
B(x;q,c)
=\left(\lambda_q(c)+o_q(1)\right)\frac{Ax}{\sqrt{\log x}}.
\end{equation*}
Consequently,
\begin{equation*}
\#\{X<m\leqslant 2X:m\in\Ssq,\ m\equiv c\pmod q\}
=\left(\lambda_q(c)+o_q(1)\right)\frac{AX}{\sqrt{\log X}}.
\end{equation*}
\end{lemma}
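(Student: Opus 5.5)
The plan is to reduce to Prachar's theorem on sums of two squares in arithmetic progressions, with the classes split according to their $p$-adic valuations. For a fixed modulus $k$ and $(c,k)=1$, Prachar's result states that the number of $m\leqslant x$ with $m\in\Ssq$ and $m\equiv c\pmod k$ is asymptotic to $\kappa(k,c)\,x/\sqrt{\log x}$, with an explicit constant. That constant vanishes unless the class is compatible with sums of two squares. The extension to non-coprime classes is routine.

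We decompose $m=2^{e}\prod_{p\in\mathcal P}p^{e_p}\,m'$ with $(m',2Q)=1$. Then $m\in\Ssq$ if and only if every $e_p$ is even and $m'\in\Ssq$; no condition is imposed on $e$. We treat the residue class $c\pmod q$ component by component, as follows.

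\begin{enumerate}
\item If $p\mid c_p$ but $p^2\nmid c_p$, then $e_p=1$ for every $m$ in the class, so $B=0$, matching $\lambda_{p^2}(c_p)=0$.
\item If $p^2\mid c_p$, the class only fixes $e_p\geqslant2$. We write $m=p^2m_1$, with $m_1$ an arbitrary integer up to $x/p^2$ and no congruence condition mod $p$ (using $m\in\Ssq\iff m_1\in\Ssq$). This gives the factor $1/p^2$ times the unrestricted density in the $p$-coordinate.
\item If $p\nmid c_p$, the class lies in a single reduced class mod $p^2$.
\end{enumerate}

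At $2$ we argue similarly. Odd $c_8$ forces $m\equiv c_8\pmod8$, and $m\in\Ssq$ forces $m'\equiv1\pmod4$ after the $3\pmod4$ primes are removed. The entries $\lambda_8(3)=\lambda_8(7)=0$ reflect this, since $3\bmod 8$ and $7\bmod 8$ are $3\bmod 4$ and an element of $\Ssq$ coprime to $2Q$ is $1\bmod 4$. Even classes reduce via $m=2m_1$ or $4m_1$, which gives the values $1/8$ at $0,4$ and $0$ at $6$.

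Iterating these reductions expresses $B(x;q,c)$ as a finite sum, over the fixed $p$-power divisors allowed by the class, of counts of $m_1\leqslant x/D$ in $\Ssq$ lying in reduced classes modulo the remaining primes. To each of these counts we apply Prachar's asymptotic; in the infinite-valuation cases, where the class does not pin down the valuation, we use Landau's theorem together with the multiplicative densities. Since $\log(x/D)\sim\log x$ for fixed $D$, every piece has the shape $(\text{const}+o_q(1))Ax/\sqrt{\log x}$.

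It remains to identify the constant with $\lambda_q(c)$. We evaluate the local densities by the heuristic that the density function of $\Ssq$ factors as $A\cdot\prod_v(\text{local factor})$, normalised so that summing over all classes mod $q$ recovers Landau's constant. For a reduced class mod $p^2$, $p\equiv3\pmod4$, the local mass is uniform over the $p(p-1)$ units. The $p$-adic measure of the valuations allowed for $\Ssq$ is $(1-1/p)(1+p^{-2}+p^{-4}+\dots)=p/(p+1)$. Renormalising this relative to the units gives $\frac{(p+1)}{p^3}=\frac{1}{p^2}\cdot\frac{p+1}{p}$ per unit class. Consistency requires $p(p-1)\cdot\frac{p+1}{p^3}+\frac1{p^2}\cdot 1=1$, which holds since $\frac{p^2-1}{p^2}+\frac1{p^2}=1$. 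Similarly at $2$, $\frac14\cdot 3+\frac18\cdot2=1$.

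I expect the main obstacle to be this bookkeeping of constants. One has to check that Prachar's constant for a reduced class mod $k$ equals $A$ times the product of the local factors above. For primes $p\equiv3\pmod4$ this can be done from the Dirichlet series $\sum_{m\in\Ssq,\,m\equiv c(k)}m^{-s}$ expressed through characters mod $k$. Only the principal-type contribution $\zeta(s)^{1/2}L(s,\chi_4)^{1/2}\prod_{p\equiv3(4)}(1-p^{-2s})^{-1/2}$, with its Euler factors at $p\mid k$ modified, has a $(s-1)^{-1/2}$ singularity. The remaining characters contribute only $o(x/\sqrt{\log x})$ by the Selberg–Delange method. The modified Euler factors then produce exactly $\lambda_{p^2}$ and $\lambda_8$. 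Uniformity over classes is automatic because there are finitely many of them. The dyadic statement follows by subtracting the asymptotics at $2X$ and $X$, using $\sqrt{\log 2X}\sim\sqrt{\log X}$.
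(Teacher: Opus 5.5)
Your reduction is the same as the paper's. You strip off the forced factor $D=\prod_{p^2\mid c}p^2$ (and $2$, $4$ or $8$ in the even classes), note that $\ell=m/D$ then runs over a reduced class modulo $8R$ (or $R$ when $8\mid c$) with no condition at the primes dividing $D$, apply Prachar, and use $\log(x/D)\sim\log x$. The dyadic consequence by differencing is also fine.

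Where you diverge, and where the proposal is not yet a proof, is the identification of the constant with $\lambda_q(c)$, which is the real content of the lemma. The paper substitutes Prachar's explicit value $\frac{(4,k)}{(2,k)k}\prod_{p\mid k,\ p\equiv3\,(4)}(1+p^{-1})$. For $k=8R$ this is $\frac1{4R}\prod_{p\in\mathcal P_1}(1+p^{-1})$, and dividing by $D$ gives $\frac14\prod_{p\in\mathcal P}\lambda_{p^2}(c)$ at once; the even classes are handled the same way after removing $2$, $4$ or $8$. You replace this with a local-density heuristic and a re-derivation of Prachar's constant that you only outline. The heuristic is supported by prime-by-prime normalisation identities such as $p(p-1)\frac{p+1}{p^3}+\frac1{p^2}=1$ and $3\cdot\frac14+2\cdot\frac18=1$. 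These are necessary conditions only: they do not show that the joint density modulo $8Q$ is the product of the local densities.

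The character sketch also needs correcting. It is not true that only the principal character modulo $k$ carries the $(s-1)^{-1/2}$ singularity. Every odd element of $\Ssq$ is $\equiv1\pmod4$, so the character $\psi_0\chi_4$ has exactly the same Dirichlet series over $\{m\in\Ssq:(m,k)=1\}$ as $\psi_0$. The two contributions add to $\frac{2}{\phi(k)}$ times the count of members of $\Ssq$ coprime to $k$ when $a\equiv1\pmod4$, and cancel when $a\equiv3\pmod4$. With $\psi_0$ alone you would get half of $\lambda_8$ in the classes $1,5\pmod8$ and a spurious positive density in $3,7\pmod8$.

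If you prefer not to quote Prachar's explicit constant, your normalisation idea can be made rigorous, but only at the level of the full modulus $8R$, not prime by prime:
\begin{enumerate}
\item Prachar's constant is the same for all reduced $a\equiv1\pmod4$ modulo $8R$.
\item Since $\{m\in\Ssq:2\mid m\}=2\Ssq$ and $\{m\in\Ssq:p\mid m\}=p^2\Ssq$ for $p\equiv3\pmod4$, a finite inclusion--exclusion from Landau's theorem shows that the members of $\Ssq$ coprime to $2R$ have relative density $\frac12\prod_{p\mid R}(1-p^{-2})$.
\item Dividing by the $\phi(8R)/2=2\prod_{p\mid R}p(p-1)$ admissible classes gives $\frac14\prod_{p\mid R}\frac{p+1}{p^3}$, which is the required value.
\end{enumerate}
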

Note that we do not have the condition $(c,q)=1$ in Lemma \ref{lem:pd-ap}. Prachar \cite{Prachar} proved the
following.

\begin{lemma}[Prachar]\label{lem:pd-prachar}
Assume that $(a,k)=1$. If $a\equiv1\pmod{(4,k)}$, then
\[
B(x;k,a)=
\left(\dfrac{(4,k)}{(2,k)k}\displaystyle\prod_{\substack{p\mid k \\ p\equiv 3\pmod{4}}}(1+\frac{1}{p})+o_k(1)\right)
\dfrac{Ax}{\sqrt{\log x}}.
\]
Otherwise, $B(x;k,a)=0$.
\end{lemma}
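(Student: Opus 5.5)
This is Prachar's theorem, which we quote. If one wanted a self-contained argument, I would use orthogonality of Dirichlet characters together with the Selberg--Delange method. The plan is to compute each character twist by contour integration and read off the constant from Euler factors at $s=1$.

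\emph{Reduction to characters.} The case $a\not\equiv1\pmod{(4,k)}$ is elementary. Then $4\mid k$, and $(a,k)=1$ forces $a\equiv3\pmod4$. Every $m\equiv a\pmod k$ is then $\equiv3\pmod4$, so it is not a sum of two squares and $B(x;k,a)=0$.

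In general, let $b$ be the indicator function of $\Ssq$. Writing
\[
B(x;k,a)=\frac1{\phi(k)}\sum_{\psi\bmod k}\overline{\psi}(a)\sum_{m\leqslant x}b(m)\psi(m),
\]
it suffices to find the asymptotic of each twisted sum. Since $b$ is multiplicative, $F(s,\psi)=\sum_m b(m)\psi(m)m^{-s}$ has the Euler product
\[
F(s,\psi)=(1-\psi(2)2^{-s})^{-1}\prod_{p\equiv1\,(4)}(1-\psi(p)p^{-s})^{-1}\prod_{p\equiv3\,(4)}(1-\psi(p)^2p^{-2s})^{-1}.
\]
Comparing Euler factors gives
\[
F(s,\psi)^2=L(s,\psi)\,L(s,\psi\chi)\,H(s,\psi),
\]
where $H$ is holomorphic and bounded in $\Re s>1/2+\delta$.

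\emph{Selberg--Delange.} Applying Perron's formula and a Hankel contour around $s=1$ (Selberg--Delange), we get
\[
\sum_{m\le x}b(m)\psi(m)=\frac{c_\psi}{\sqrt\pi}\frac{x}{\sqrt{\log x}}+o_k\!\Big(\frac{x}{\sqrt{\log x}}\Big),\qquad c_\psi=\lim_{s\to1^+}(s-1)^{1/2}F(s,\psi).
\]
This uses the classical zero-free region for $L(s,\psi)$ and $L(s,\psi\chi)$, which is unproblematic since $k$ is fixed. We have $c_\psi\neq0$ exactly when one of $\psi,\psi\chi$ is principal. That happens for $\psi=\psi_0$, and additionally, when $4\mid k$, for $\psi=\chi\psi_0$. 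For every other $\psi$ both $L$-functions are regular and nonvanishing at $s=1$, so the twisted sum is $o(x/\sqrt{\log x})$.

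\emph{Constant.} The function $F(s,\psi_0)$ is the full series $\sum b(m)m^{-s}$ with the Euler factors at $p\mid k$ removed. Landau's theorem corresponds to $(s-1)^{1/2}\sum b(m)m^{-s}\to A\sqrt\pi$. Hence
\[
c_{\psi_0}=A\sqrt\pi\prod_{p\mid k}e_p,
\]
with $e_2=\tfrac12$, $e_p=1-\tfrac1p$ for $p\equiv1\pmod4$, and $e_p=1-p^{-2}$ for $p\equiv3\pmod4$. Dividing by $\phi(k)=k\prod_{p\mid k}(1-1/p)$, the primes $p\equiv1\pmod4$ cancel and $p=2$ contributes $1$. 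Each $p\equiv3\pmod4$ leaves a factor $1+1/p$, so the principal character gives $\frac1k\prod_{p\mid k,\,p\equiv3(4)}(1+1/p)$.

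When $4\mid k$, the term $\chi\psi_0$ has the same constant: for $m$ coprime to $k$ with $b(m)=1$ we have $m\equiv1\pmod4$, so $\chi(m)=1$ and $F(s,\chi\psi_0)=F(s,\psi_0)$. Its weight is $\chi(a)=1$, so its contribution equals that of $\psi_0$. This accounts for the factor $(4,k)/(2,k)$, which equals $2$ when $4\mid k$ and $1$ otherwise.

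The main obstacle is the Selberg--Delange step, specifically controlling $F(s,\psi)=\big(L(s,\psi)L(s,\psi\chi)H(s,\psi)\big)^{1/2}$ on the contour. This needs a consistent branch of the square root near the zero-free region, together with polynomial bounds for $L$ to the left of $\Re s=1$. For fixed $k$ this is standard (for example, Tenenbaum's treatment), which is why we simply cite Prachar.
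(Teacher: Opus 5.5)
Your proposal is correct and matches the paper, which likewise gives no argument and simply quotes Prachar's theorem. The optional Selberg--Delange sketch you add is also sound. The factorization $F(s,\psi)^2=L(s,\psi)L(s,\psi\chi)H(s,\psi)$, the normalization $(s-1)^{1/2}\sum_m b(m)m^{-s}\to A\sqrt\pi$, the local computation giving $\frac1k\prod_{p\mid k,\,p\equiv3\,(4)}(1+\frac1p)$ for $\psi_0$, and the doubling from $\chi\psi_0$ when $4\mid k$ all check out.
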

Taking $k=8r$ with all prime factors of $r$ congruent to $3$ modulo $4$, by Lemma \ref{lem:pd-prachar}, we have for $(a,8r)=1$ that
\begin{align}\label{eq:pd-reduced}B(x;8r,a)=
\begin{cases}
\left(\dfrac{1}{4r}\displaystyle\prod_{\substack{p\mid r }}(1+p^{-1})+o_r(1)\right)
\dfrac{Ax}{\sqrt{\log x}},&\ a\equiv1\pmod{4},\\[4mm]
0,&\ a\equiv3\pmod{4}.
\end{cases}
\end{align}

\begin{lemma}\label{lem:pd-odd}Assume that $(c,2)=1$. Then
\begin{align}\label{eq:pd-odd}
B(x;8Q,c)=
\begin{cases}\left(\frac{1}{4}\displaystyle\prod_{p\in\mathcal P}\lambda_{p^2}(c)+o_Q(1)\right)
\frac{Ax}{\sqrt{\log x}},&\ c\equiv1\pmod{4},\\[4mm]
0,&\ c\equiv3\pmod{4}.
\end{cases}
\end{align}
\end{lemma}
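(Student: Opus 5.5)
The plan is to reduce the non-coprime residue class $c\pmod{8Q}$ to reduced classes by factoring out the exact powers of the primes $p\in\mathcal P$ dividing $c$, and then apply \eqref{eq:pd-reduced}.

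The case $c\equiv3\pmod4$ is immediate. Every $m\equiv c\pmod 8$ is then $\equiv3\pmod4$, and such $m$ is never a sum of two squares. So $B(x;8Q,c)=0$.

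Now assume $c\equiv1\pmod4$ and $c$ odd. Split $\mathcal P$ according to the $p^2$-component $c_p$ of $c$.

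\emph{Primes with $p\mid c_p$ but $p^2\nmid c_p$.} Every $m\equiv c\pmod{8Q}$ satisfies $p\,\|\,m$. Such $m$ is not in $\Ssq$, so $B=0$, which agrees with $\lambda_{p^2}(c)=0$.

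\emph{The remaining primes.} Let $\mathcal P_0=\{p\in\mathcal P:p^2\mid c\}$ and $\mathcal P_1=\{p\in\mathcal P:p\nmid c\}$. Then every $m$ in the class is divisible by $d^2$, where $d=\prod_{p\in\mathcal P_0}p$. Decompose by the exact $p$-adic valuations: write $m=\ell^2 m'$, where $\ell=\prod_{p\in\mathcal P_0}p^{k_p}$ with $k_p\ge1$ and $(m',d)=1$. For $p\equiv3\pmod4$ we have $m\in\Ssq$ iff $m'\in\Ssq$. Since $\ell$ is odd, $\ell^2\equiv1\pmod8$, so the condition becomes $m'\equiv c\pmod 8$. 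The condition modulo $p^2$ for $p\in\mathcal P_1$ becomes $m'\equiv c\,\ell^{-2}\pmod{p^2}$, and for $p\in\mathcal P_0$ it becomes $p\nmid m'$.

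Hence, with $r_1=\prod_{p\in\mathcal P_1}p^2$, each $\ell$ contributes the sum over reduced classes $a\pmod{8r_1 d}$ that are $\equiv1\pmod4$, have the prescribed components modulo $8r_1$, and are arbitrary units modulo $d$. There are $\phi(d)$ such classes. Applying \eqref{eq:pd-reduced} with $r=r_1d$, the contribution of $\ell$ is
\[
\phi(d)\,\frac{1}{4r_1d}\prod_{p\mid r_1d}\Big(1+\frac1p\Big)\frac{A\,x/\ell^2}{\sqrt{\log(x/\ell^2)}}.
\]
For each $p\in\mathcal P_0$ the local factor is $(1-1/p)(1+1/p)=1-p^{-2}$. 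For each $p\in\mathcal P_1$ it is $(p+1)/p^3=\lambda_{p^2}(c)$.

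Summing over $\ell$ gives $\sum_{k\ge1}p^{-2k}=1/(p^2-1)$ for each $p\in\mathcal P_0$. Multiplied by $1-p^{-2}$ this yields $1/p^2=\lambda_{p^2}(c)$, so the total is $\frac14\prod_p\lambda_{p^2}(c)\,Ax/\sqrt{\log x}$, as claimed in \eqref{eq:pd-odd}.

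The main obstacle is justifying the interchange of the infinite sum over $\ell$ with the asymptotic, since \eqref{eq:pd-reduced} has only $o_r(1)$ errors, uniform in $x$ but not in $\ell$. I would truncate at $\ell\le L$ for a fixed large $L$. Each of the finitely many terms $\ell\le L$ has $\sqrt{\log(x/\ell^2)}\sim\sqrt{\log x}$. The tail $\ell>L$ is bounded trivially by $\#\{m\le x:\ell^2\mid m,\ m'\in\Ssq\}\ll x/(\ell^2\sqrt{\log(x/\ell^2)})$ for $\ell^2\le\sqrt x$ (for instance by Landau's bound), and by $x/\ell^2$ otherwise. Both are $\ll x/(L\sqrt{\log x})$ after summation, plus a negligible $\sum_{\ell>x^{1/4}}x/\ell^2$. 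Letting $L\to\infty$ slowly then gives the $o_Q(1)$ error.
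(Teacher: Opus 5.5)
Your proof is correct, but it takes a longer road than the paper's. The paper removes only $D=\prod_{p\in\mathcal P_0}p^2$. Write $m=D\ell$. For $p\in\mathcal P_0$ the condition $m\equiv c\pmod{p^2}$ is then automatic, so $\ell$ is completely unconstrained at those primes and may still be divisible by $p$. Since $D$ is a square, $m\in\Ssq$ if and only if $\ell\in\Ssq$. This gives the exact identity $B(x;8Q,c)=B(x/D;8R,c/D)$, where $R=Q/D$, $(c/D,8R)=1$ and $c/D\equiv1\pmod4$. A single application of \eqref{eq:pd-reduced} with $r=R$ at height $x/D$ then gives the coefficient $\frac1{4Q}\prod_{p\in\mathcal P_1}(1+p^{-1})=\frac14\prod_{p\in\mathcal P}\lambda_{p^2}(c)$.

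You strip the exact $p$-adic valuation instead, which forces the cofactor to be coprime to $d$. That costs you three extra steps: the lift to modulus $8r_1d$ with $\phi(d)$ classes, the infinite sum over $\ell$, and the truncation/tail argument you called the main obstacle. Your final identity $(1-p^{-2})\sum_{k\ge1}p^{-2k}=p^{-2}$ just reassembles the paper's single count from these pieces.

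Your tail estimate is valid: Landau's bound for $\ell^2\le\sqrt x$, the trivial bound beyond, and $\sum_{\ell>L}\ell^{-2}\ll1/L$. So nothing is wrong, but that obstacle is avoidable, and the paper's decomposition needs no limiting argument at all. Your route has only the modest merit of exhibiting $\lambda_{p^2}(c)=p^{-2}$ explicitly as a sum of local densities over the possible valuations $v_p(m)=2k\ge2$.
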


\begin{proof}It suffices to consider the case $c\equiv1\pmod{4}$.
If, for some $p\in\mathcal P$, one has $p\mid c$ but $p^2\nmid c$, then every $m\equiv c\pmod{p^2}$ satisfies $v_p(m)=1$, so no such $m$ belongs to $\Ssq$.  Both sides of \eqref{eq:pd-odd} are then zero at the main term level because $\lambda_{p^2}(c)=0$.

Assume now that, for every $p\in\mathcal P$, either $p\nmid c$ or $p^2\mid c$.  Split
\[
\mathcal P=\mathcal P_0\sqcup\mathcal P_1,
\]
where $p\in\mathcal P_0$ when $p^2\mid c$, and $p\in\mathcal P_1$ when $p\nmid c$.  Put
\[
D=\prod_{p\in\mathcal P_0}p^2,
\qquad R=\prod_{p\in\mathcal P_1}p^2.
\]
Note that $D\mid c$, $(R,c)=1$, $Q=DR$, and $D\equiv1\pmod8$. For $m\in\Ssq$ with $m\equiv c\pmod Q$, every $p\in\mathcal P_0$ occurs in $m$ to an even exponent at least two.  Thus $m=D\ell$ with $\ell\in\Ssq$.  The congruence $D\ell\equiv c\pmod{8Q}$ is equivalent to $\ell\equiv c/D\pmod{8R}$.  We apply \eqref{eq:pd-reduced} with $r=R$, at height $x/D$, to obtain
\[
B(x;8Q,c)=\left(\frac{1}{4Q}\prod_{p\in\mathcal P_1}(1+p^{-1})+o_Q(1)\right)
\frac{Ax}{\sqrt{\log x}}.
\]
Recall \eqref{eq:pd-local-p}. The above coefficient is exactly
\[
\frac{1}{4Q}\prod_{p\in\mathcal P_1}(1+p^{-1})=\frac{1}{4}\prod_{p\in\mathcal P_0}\frac1{p^2}
\prod_{p\in\mathcal P_1}\frac{p+1}{p^3}
=\frac{1}{4}\prod_{p\in\mathcal P}\lambda_{p^2}(c),
\]
and this completes the proof.
\end{proof}

\begin{proof}[Proof of Lemma \ref{lem:pd-ap}] In view of Lemma \ref{lem:pd-odd}, it remains only to explain the eight entries of $\lambda_8$. Since $a^2+b^2\not\equiv 3,6,7\pmod{8}$, this coincides with $\lambda_8(3)=\lambda_8(6)=\lambda_8(7)=0$.

For every integer $j\geqslant0$, multiplication by $2^j$ preserves membership
in $\Ssq$ and satisfies $\lambda_{p^2}(2^jc)=\lambda_{p^2}(c)$ for $p\in\mathcal P$.

If $c\equiv1$ or $5\pmod8$, then the desired conclusion follows immediately from \eqref{eq:pd-local-2} and Lemma \ref{lem:pd-odd}.

If $c\equiv2\pmod8$, write $m=2\ell$ and $c=2c'$.  Then $\ell$ is odd and $c'\equiv 1\pmod{4}$. Moreover, $m\equiv c\pmod{8Q}$ is equivalent to $\ell\equiv c'\pmod{4Q}$. In other words, $\ell\equiv c',c'+4Q\pmod{8Q}$. Applying Lemma \ref{lem:pd-odd} to $\{\ell \leqslant x/2:\ell\in\Ssq,\ \ell\equiv c''\pmod{8Q}\}$ with $c''=c', c'+4Q$, we obtain
$$B(x;q,c)=\left(\frac{1}{4}\displaystyle\prod_{p\in\mathcal P}\lambda_{p^2}(c)+o_Q(1)\right)
\frac{Ax}{\sqrt{\log x}}.$$

If $c\equiv4\pmod8$, write $m=4\ell$ and $c=4c'$.  Then both $\ell$ and $c'$ are odd. Moreover, $m\equiv c\pmod{8Q}$ is equivalent to $\ell\equiv c'\pmod{2Q}$. In other words, $\ell\equiv c',c'+2Q,c'+4Q,c'+6Q\pmod{8Q}$. Two of $\{c',c'+2Q,c'+4Q,c'+6Q\}$ are congruent to $3$ modulo $4$. Applying Lemma \ref{lem:pd-odd} to $\{\ell \leqslant x/4:\ell\in\Ssq,\ \ell\equiv c''\pmod{8Q}\}$ and summing over the two nonzero cases, we obtain the factor $\frac{1}{8}$, which coincides with $\lambda_8(4)$.

If $c\equiv0\pmod8$, write $m=8\ell$ and $c=8c'$. Then
$m\in\Ssq$ if and only if $\ell\in\Ssq$, and the congruence
$m\equiv c\pmod{8Q}$ is equivalent to $\ell\equiv c'\pmod Q$.
If some $p\in\mathcal P$ satisfies $p\mid c'$ but $p^2\nmid c'$,
both the count and the predicted main term vanish. Otherwise, put
\[
D=\prod_{\substack{p\in\mathcal P\\p^2\mid c'}}p^2,
\qquad R=Q/D.
\]
Writing $\ell=Du$ gives
\[
B(x;8Q,c)=B\!\left(\frac{x}{8D};R,\frac{c'}D\right),
\qquad \left(\frac{c'}D,R\right)=1.
\]
Lemma~\ref{lem:pd-prachar} now yields
\[
B(x;8Q,c)=
\left(\frac{1}{8Q}\prod_{p\mid R}(1+p^{-1})+o_Q(1)\right)
\frac{Ax}{\sqrt{\log x}}.
\]
The coefficient equals $\frac18\prod_{p\in\mathcal P}\lambda_{p^2}(c)$,
which agrees with $\lambda_8(0)=\frac18$.
\end{proof}

For $d=8$, $d=p^2$ with $p\equiv3\pmod4$ prime, or $d=q$, define
\[
\Sigma_d(a)=\sum_{r\pmod d}\lambda_d(a-r^2).
\]
The Chinese remainder theorem and the product formula \eqref{eq:pd-lambda} imply
\begin{equation}\label{eq:pd-sigma-factor}
\Sigma_q(a)=\Sigma_8(a)\prod_{p\in\mathcal P}\Sigma_{p^2}(a).
\end{equation}

\begin{lemma}\label{lem:pd-local-factors}
(i)
Let $p\equiv3\pmod4$ be prime.  Then
\begin{equation*}
\Sigma_{p^2}(1)=1-\frac1p.
\end{equation*}
(ii) For $a_0\in\{1,2,3,5,6\}$,
\begin{equation}\label{eq:pd-two-factor}
\Sigma_8(a_0)=
\begin{cases}
1,&a_0\equiv3\pmod8,\\[1mm]
\dfrac32,&a_0\equiv1,2,5,6\pmod8.
\end{cases}
\end{equation}
\end{lemma}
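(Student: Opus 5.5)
Both parts are finite local computations. I will count the residues $r$ according to the value of $r^2$ and read off $\lambda_d(a-r^2)$ from the tables \eqref{eq:pd-local-p} and \eqref{eq:pd-local-2}. No analytic input is needed; the only point requiring care is the lifting of the congruence $r^2\equiv1$ from $p$ to $p^2$ in part (i).

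\emph{Part (i).} Let $p\equiv3\pmod4$. I split the residues $r\pmod{p^2}$ according to the valuation of $1-r^2=(1-r)(1+r)$.
\begin{itemize}
\item If $r\not\equiv\pm1\pmod p$, then $p\nmid 1-r^2$. There are $p-2$ such classes modulo $p$, hence $p(p-2)$ classes modulo $p^2$, each with weight $(p+1)/p^3$.
\item If $r\equiv\pm1\pmod p$, there are $2p$ classes modulo $p^2$. Since $p$ is odd, $p$ divides exactly one of $1-r$ and $1+r$. So $p^2\mid 1-r^2$ holds precisely when $r\equiv\pm1\pmod{p^2}$, which gives two classes of weight $1/p^2$. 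The remaining $2p-2$ classes have $p\,\|\,1-r^2$ and weight $0$.
\end{itemize}
Summing,
\[
\Sigma_{p^2}(1)=\frac{p(p-2)(p+1)}{p^3}+\frac{2}{p^2}=\frac{p^2-p-2+2}{p^2}=1-\frac1p .
\]

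\emph{Part (ii).} Modulo $8$, the value of $r^2$ is as follows:
\begin{itemize}
\item $r^2\equiv0$ for $r\in\{0,4\}$;
\item $r^2\equiv4$ for $r\in\{2,6\}$;
\item $r^2\equiv1$ for the four odd residues.
\end{itemize}
Hence
\[
\Sigma_8(a)=2\lambda_8(a)+4\lambda_8(a-1)+2\lambda_8(a-4).
\]
Inserting the values from \eqref{eq:pd-local-2}:
\begin{itemize}
\item $a=3$: $0+4\cdot\tfrac14+0=1$;
\item $a=1$: $\tfrac12+\tfrac12+\tfrac12=\tfrac32$;
\item $a=2$: $\tfrac12+1+0=\tfrac32$;
\item $a=5$: $\tfrac12+\tfrac12+\tfrac12=\tfrac32$;
\item $a=6$: $0+1+\tfrac12=\tfrac32$.
\end{itemize}
This is \eqref{eq:pd-two-factor}.

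The main (mild) obstacle is the Hensel-type step in (i): one must check that among the $2p$ lifts of $\pm1$, exactly two satisfy $p^2\mid 1-r^2$. This uses that $p$ is odd, so that $p$ cannot divide both $1-r$ and $1+r$. Everything else is direct bookkeeping.
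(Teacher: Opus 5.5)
Your proposal is correct and matches the paper's proof: in part (i) you use the same three-way split of residues modulo $p^2$ (with $p^2-2p$, $2(p-1)$ and $2$ classes), and in part (ii) the same identity $\Sigma_8(a)=2\lambda_8(a)+4\lambda_8(a-1)+2\lambda_8(a-4)$ evaluated against the table. Your explanation of why exactly two lifts of $\pm1$ satisfy $p^2\mid 1-r^2$, and your case-by-case values for part (ii), supply details the paper leaves implicit.
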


\begin{proof}
(i) Note that 
\begin{align*}&\#\{r\pmod{p^2}:p^2\mid 1-r^2\}=2,
\\ 
& \#\{r\pmod{p^2}:p\nmid 1-r^2\}=p^2-2p,
\\ & \#\{r\pmod{p^2}:p\| 1-r^2\}=2(p-1).\end{align*}
Thus,
\begin{align*}
\Sigma_{p^2}(1)
=2\frac1{p^2}+(p^2-2p)\frac{p+1}{p^3}=1-\frac1p.
\end{align*}
(ii)
Modulo $8$, the square $r^2$ equals $0$ for two residue classes $r$, equals $4$ for two residue classes, and equals $1$ for four residue classes. Thus \eqref{eq:pd-two-factor} follows from the table \eqref{eq:pd-local-2} and the identity
\begin{equation*}
\Sigma_8(a)=2\lambda_8(a)+2\lambda_8(a-4)+4\lambda_8(a-1).
\qedhere
\end{equation*}
This proof of the lemma is complete.
\end{proof}

\subsection{Completion of the proof}

\begin{proof}[Proof of Theorem \ref{thm:critical-exceptions}]
For fixed $C>0$, we choose $T=T_C$ depending only on $C$ such that
\begin{align}\label{eq:pd-choose-modulus}\frac{3AC}{2}\prod_{\substack{p\leqslant T \\ p\equiv 3\pmod{4}}}\left(1-\frac1p\right)<\frac{1}{4}.\end{align}
This is possible because $\sum_{p\equiv 3\pmod{4}}p^{-1}=\infty$.
Let 
$$q=8Q,\ \ \ Q=\prod_{\substack{p\leqslant T \\ p\equiv 3\pmod{4}}}p^2.$$
Fix an admissible residue $a_0\in\{1,2,3,5,6\}$.  By the Chinese remainder theorem, choose $a\pmod q$ satisfying
\begin{equation*}
a\equiv a_0\pmod8,
\qquad a\equiv1\pmod{Q}.
\end{equation*}
Every integer $n\equiv a\pmod q$ lies in the prescribed admissible class $a_0\pmod8$.

Let
\begin{equation*}
Y=\left\lfloor C\sqrt{\log(2X)}\right\rfloor.
\end{equation*}
If $X<n\le2X$ and $|z|\leqslant C\sqrt{\log n}$, then $|z|\leqslant Y$.  Since $z^2=(-z)^2$, it is enough to consider the $Y+1$ values
\[
z=0,1,\ldots,Y.
\]

Let $R_a(X)$ be the number of integers $n$ satisfying
\[
X<n\leqslant 2X,
\qquad n\equiv a\pmod q,
\]
that possess at least one representation
\[
n=x^2+y^2+z^2,
\qquad |z|\leqslant C\sqrt{\log n}.
\]
For every such $n$, at least one $z\in[0,Y]$ satisfies $n-z^2\in\Ssq$.  Therefore the union bound gives
\begin{equation*}
R_a(X)
\leqslant
\sum_{0\leqslant z\leqslant Y}
\#\{X<n\leqslant 2X:n\equiv a\pmod q,\ n-z^2\in\Ssq\}.
\end{equation*}

For a fixed $z$, put $m=n-z^2$.  Then
\[
m\equiv a-z^2\pmod q.
\]
The interval for $m$ is $(X-z^2,2X-z^2]$.  Replacing it by $(X,2X]$ changes the count by at most $O(z^2+1)$.
Since $z^2\ll_C\log X=o_C(X/\sqrt{\log X})$ and $q$ depends only on $C$,
Lemma~\ref{lem:pd-ap} gives, uniformly for $0\leqslant z\leqslant Y$,
\begin{align*}
&\,\#\{X<n\leqslant 2X:n\equiv a\pmod q,\ n-z^2\in\Ssq\}
\\= &\,\lambda_q(a-z^2)\frac{AX}{\sqrt{\log X}}+o_C(X/\sqrt{\log X}).
\end{align*}
Therefore,
\begin{align}\label{eq:pd-first-moment}
R_a(X)
&\leqslant \frac{AX}{\sqrt{\log X}}
\sum_{0\leqslant z\leqslant Y}\lambda_q(a-z^2)+o_C(X).
\end{align}

Note that $\lambda_q(a-z^2)$ is periodic modulo $q$ as a function of $z$.  Hence
\begin{equation}\label{eq:pd-periodic}
\sum_{0\leqslant z\leqslant Y}\lambda_q(a-z^2)
=\frac{Y+1}{q}\Sigma_q(a)+O_q(1).
\end{equation}
Since $(Y+1)/\sqrt{\log X}=C+o(1)$, equations \eqref{eq:pd-first-moment} and \eqref{eq:pd-periodic} yield
\begin{equation*}
R_a(X)
\leqslant \left(AC\Sigma_q(a)+o_C(1)\right)\frac{X}{q}.
\end{equation*}
It follows from \eqref{eq:pd-sigma-factor} and  Lemma \ref{lem:pd-local-factors} that
\begin{equation*}
\Sigma_q(a)\leqslant \frac{3}{2}
\prod_{\substack{p\leqslant T \\ p\equiv 3\pmod{4}}}\left(1-\frac1p\right).
\end{equation*}
Recalling \eqref{eq:pd-choose-modulus}, we obtain $AC\Sigma_q(a)<\frac14$.
Thus, for all sufficiently large $X$,
\begin{equation}\label{eq:pd-represented}
R_a(X)\leqslant \frac13\frac{X}{q}.
\end{equation}
On the other hand,
\begin{equation}\label{eq:pd-progression-size}
\#\{X<n\leqslant 2X:n\equiv a\pmod q\}
=\frac{X}{q}+O(1).
\end{equation}
Subtracting \eqref{eq:pd-represented} from \eqref{eq:pd-progression-size}, we find 
\[
\#\mathcal E_{C,a_0}(X)\geqslant \frac12\frac{X}{q}
\]
for all sufficiently large $X$.  In particular, Theorem \ref{thm:critical-exceptions} holds with $\delta_C=\frac1{2q}$.
\end{proof}

\section{The sum of a squarefree number and a power of $2$}

\begin{theorem}[Granville-Soundararajan] Assume that there are $\leqslant 2\log x/(\log\log x)^2$ primes $p\leqslant x$ for which $p^2\mid 2^{p-1}-1$. Then for all but $O(x/\log x)$ of odd integers $n\leqslant x$ can be represented as the sum of a squarefree number and a power of $2$.
\end{theorem}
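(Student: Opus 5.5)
The plan is to show that for almost every odd $n\leqslant x$ at least one of the numbers $n-2^k$, $1\leqslant k\leqslant K$ with $K\approx \log x/\log 2$, is squarefree. The approach is a second-moment (or union) bound over the primes whose squares could divide these shifts. I do not try to produce a single good $k$ for each $n$. Instead I count the \emph{exceptional} $n$, namely those for which every $n-2^k$ with $1\leqslant k\leqslant K_0$ is non-squarefree, where $K_0=\lfloor \log\log x\rfloor$ or a similar slowly growing cutoff. The choice of $K_0$ is made so that $2^{K_0}$ stays tiny compared with $x$.

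Step 1 (small primes). For $p\leqslant z$ with $z$ a fixed large constant, I use the local structure. The event $p^2\mid n-2^k$ depends only on $n \bmod p^2$ and on $k \bmod \ord_{p^2}(2)$. By the Chinese remainder theorem, I would estimate the density of $n$ for which all $n-2^k$ ($k\leqslant K_0$) are divisible by some $p^2$ with $p\leqslant z$, together with independence heuristics over the $K_0$ values of $k$. The point is that each fixed $k$ fails with probability $1-6/\pi^2\cdot(\text{odd correction})<1$.

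Step 2 (large primes, using the hypothesis). For $z<p$, the divisibility $p^2\mid n-2^k$ for many $k$ forces $2^{k}\equiv 2^{k'}\pmod{p^2}$ for pairs $k,k'\leqslant K$ whenever the same $p$ serves two different $k$. I would bound the contribution of primes $p$ with $\ord_{p^2}(2)\leqslant K$: there are few of them. Apart from the primes with $p^2\mid 2^{p-1}-1$ (which the hypothesis bounds by $2\log x/(\log\log x)^2$), one has $\ord_{p^2}(2)=p\cdot\ord_p(2)$, and $p\mid 2^j-1$ for some $j\leqslant K$ restricts $p$ to $O(K^2)$ primes. For the remaining $(n,k,p)$ each prime contributes about $x/p^2$ values of $n$. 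The total count of $n$ with $p^2\mid n-2^k$ for some $p>z$ and some $k\leqslant K_0$ is then $\ll K_0 x/z$, plus a range $p>\sqrt x$ handled trivially since $p^2>x$ is impossible. This is not yet $O(x/\log x)$, so for the stated strength I would instead take $K$ large (all $k\leqslant \log x$) and use a higher moment. Specifically, I would count $n$ such that for \emph{every} $k\leqslant K$ there is a prime square divisor, through the expected number of "bad $K$-tuples". The events for distinct $k$ are nearly independent modulo distinct primes, giving a bound of the shape $x\,(1-c)^{K}\ll x/\log x$ once $K\gg\log\log x$ is taken with a suitable constant.

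Main obstacle. The hard step is controlling the correlations in Step 2, where the same prime $p$ divides $n-2^k$ for several $k$. That happens exactly when $2^{k-k'}\equiv1\pmod{p^2}$, that is when $\ord_{p^2}(2)$ is small, and this is where Wieferich-type primes enter. I would first try to bound the number of such $(p,k,k')$ by combining the trivial bound $\#\{p: p\mid 2^j-1\}\ll j$ with the hypothesis for primes where the order does not lift. Summing $\sum_{p}x/p^2$ over these exceptional primes against the $2\log x/(\log\log x)^2$ count is what should leave an $O(x/\log x)$ error.
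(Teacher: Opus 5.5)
The paper quotes this theorem from Granville--Soundararajan without proof, so I judge your plan on its own terms. There is a genuine gap. It sits in Step~1 and in the way you propose to use the hypothesis.

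For fixed $n$ and an odd prime $p$, the set $\{k:\ p^2\mid n-2^k\}$ is either empty or a full residue class $a_p\pmod{\omega(p)}$, where $\omega(p)=\ord_{p^2}(2)$. Since $\omega(3)=6$, $\omega(5)=20$ and $\omega(7)=21$, a positive proportion of all $n$ have whole progressions of shifts killed by a single small prime. So the failures for different $k$ are not nearly independent. The event that $n-2^k$ is non-squarefree for every $k\leqslant K$ is a covering event: the classes $a_p\pmod{\omega(p)}$ attached to the primes dividing the shifts must cover $\{1,\dots,K\}$.

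No bound of the shape $x(1-c)^K$ can be proved without a quantitative non-covering input. Theorem~\ref{thm:squarefree-covering} shows this is the whole difficulty: if the $\omega(p)$ admitted a covering system with distinct primes, the exceptional set would not even have density zero. Your proposal never states such an input. The mechanism in your ``main obstacle'' paragraph cannot replace it. The correlated primes are mostly small non-Wieferich primes, whose contribution $\sum_p x/p^2$ is $\gg x$ and belongs to the main term. No bound on the number of Wieferich primes turns such a sum into $O(x/\log x)$.

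The strategy can be repaired, but only by making this step explicit. Take $K=\lfloor\log\log x\rfloor$ and $z=K\log x$. The $n$ with some $n-2^k$ ($k\leqslant K$) divisible by $p^2$ for a prime $p>z$ number $\ll Kx/z+K\sqrt x\ll x/\log x$. For every other exceptional $n$, choose $p_k\leqslant z$ with $p_k^2\mid n-2^k$; this produces a covering configuration $(S,(a_p)_{p\in S})$. There are $x^{o(1)}$ configurations, each accounting for at most $x\prod_{p\in S}p^{-2}+1$ values of $n$. This is also why $K$ must stay of size $\log\log x$: with your alternative $K\asymp\log x$, the number of configurations, and hence the accumulated $+1$ errors, exceeds $x$. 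Write $c(p,a)=\#\{k\leqslant K:\ k\equiv a\pmod{\omega(p)}\}$. Rankin's trick gives, for every $\lambda\geqslant1$,
\[
\sum_{(S,(a_p))\ \text{covering}}\ \prod_{p\in S}\frac1{p^2}
\leqslant\lambda^{-K}\prod_{3\leqslant p\leqslant z}\Big(1+\frac1{p^2}\sum_{a:\,c(p,a)\geqslant1}\lambda^{c(p,a)}\Big),
\]
and the logarithm of the right-hand side is at most
\[
-K\log\lambda\Big(1-\sum_{p:\,\omega(p)<K}\frac1{\omega(p)}\Big)+O\big(\lambda\sqrt K\big)+\log(1+\lambda)\,\#\{p:\ \omega(p)<K\}.
\]
For a suitable constant $\lambda$ this is $\leqslant-2\log\log x$, provided $\sum_p1/\omega(p)\leqslant1-\delta$ for a fixed $\delta>0$ and $\#\{p:\omega(p)<K\}=o(K)$.

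That is where the hypothesis genuinely enters. Non-Wieferich primes have $\omega(p)=p\,\ord_p(2)$, so they contribute only about $0.32$ to the sum and at most $\pi(K)$ to the count. A Wieferich prime has $\omega(p)=\ord_p(2)\geqslant\log_2p$ and is controlled only by the counting assumption. For the count, apply it at height $2^K$. For $\sum 1/\log_2 p$, use partial summation together with a numerical check of small primes. Without this non-covering estimate your argument does not yield even $o(x)$ exceptions.
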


Granville and Soundararajan \cite{GS} pointed out that it would be nicer to have an ``~if and only if~" statement of some kind.

For $q$ odd, we use $ord_{q}(2)$ to denote the order of $2$ in $(\Z/q\Z)^\ast$. Following \cite{GS}, we write 
$$\omega(p)=ord_{p^2}(2).$$

\begin{theorem}\label{thm:squarefree-covering} The following two are equivalent.

(i) Almost all odd numbers can be represented as the sum of a squarefree number and a power of $2$;

(ii) There do not exist an integer $m\geqslant1$, pairwise distinct odd primes
$p_1,\ldots,p_m$, and integers $a_1,\ldots,a_m$ such that the residue classes
$a_j\pmod{\omega(p_j)}$, $1\leqslant j\leqslant m$, cover $\mathbb{Z}$.
  
\end{theorem}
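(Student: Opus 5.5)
For (i)$\Rightarrow$(ii) I would argue by contraposition and build a positive-density obstruction from a covering. For (ii)$\Rightarrow$(i) I would use the $W$-trick: within each residue class modulo a product of small prime squares, (ii) supplies one exponent $k$ with no local obstruction. For that single $k$ a first-moment (union-bound) estimate already suffices, so no variance argument as in Proposition~\ref{lem:variance} should be needed.

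\emph{(i)$\Rightarrow$(ii).} Suppose there are distinct odd primes $p_1,\dots,p_m$ and integers $a_j$ such that the classes $a_j \pmod{\omega(p_j)}$ cover $\Z$. We may take every $a_j\geqslant0$. By the Chinese remainder theorem choose a class $n_0\pmod{2\prod_j p_j^2}$ with $n_0$ odd and $n_0\equiv 2^{a_j}\pmod{p_j^2}$ for all $j$. Take any $n\equiv n_0$ and any $k\geqslant0$ with $n-2^k>0$. Some $j$ has $k\equiv a_j\pmod{\omega(p_j)}$, so $2^k\equiv 2^{a_j}\equiv n\pmod{p_j^2}$. Hence $p_j^2\mid n-2^k$, and $n-2^k$ is not squarefree. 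This progression of odd integers has positive relative density among odd integers, contradicting (i).

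\emph{(ii)$\Rightarrow$(i).} Fix $w$ and put $W=2\prod_{3\leqslant p\leqslant w}p^2$. Partition the odd $n\in(X,2X]$ into classes $n\equiv c\pmod W$.

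\begin{enumerate}[label=(\alpha*)]
\item \emph{Local obstructions.} For each odd $p\leqslant w$, the set $\{k\geqslant0: p^2\mid c-2^k\}$ is either empty or a single residue class $b_p\pmod{\omega(p)}$, because $2^k\pmod{p^2}$ depends only on $k\pmod{\omega(p)}$ and is injective there.
\item \emph{A good exponent.} Hypothesis (ii), applied to the primes where such a class exists, says these classes do not cover $\Z$. So there is an integer $k_c$, which we may take in $[0,L)$ with $L=\operatorname{lcm}_{p\leqslant w}\omega(p)$, such that $p^2\nmid c-2^{k_c}$ for every odd $p\leqslant w$. Since $n-2^{k_c}$ is odd, $4\nmid n-2^{k_c}$ as well. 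The number $k_c$ depends only on $w$, so it is bounded as $X\to\infty$.
\item \emph{First moment.} Count the $n\equiv c\pmod W$ in $(X,2X]$ with $n-2^{k_c}$ not squarefree. Every such $n$ has $p^2\mid n-2^{k_c}$ for some prime $p>w$ with $p\leqslant\sqrt{2X}$. The condition $n\equiv c\pmod W$ combined with $p^2\mid n-2^{k_c}$ is a single class modulo $Wp^2$. The count is therefore at most
\[
\sum_{w<p\leqslant\sqrt{2X}}\Big(\frac{X}{Wp^2}+1\Big)\ll\frac{X}{Ww\log w}+\sqrt X .
\]
\item \emph{Summing over classes.} Summing over the $O_w(1)$ classes $c$, the proportion of odd $n\in(X,2X]$ admitting no representation is $O(1/w)+o_w(1)$. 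Letting $X\to\infty$ and then $w\to\infty$, and applying the same dyadic argument as at the end of the proof of Theorem~\ref{thm:main}, gives (i).
\end{enumerate}

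The main point of care is step (a) together with the fact that (ii) is applied only to finitely many primes at once, which is exactly what it provides. In step (c) the tail over large $p$ is crude but adequate, since trivially $\#\{n\leqslant 2X: p^2\mid n-2^{k_c}\}\leqslant 2X/p^2+1$. I expect no serious obstacle beyond keeping the uniformity straight: $k_c$ and $W$ are fixed before $X\to\infty$.
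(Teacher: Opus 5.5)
Your proof is correct, and for (ii)$\Rightarrow$(i) it follows the paper's route. Your steps (a)--(b) are Lemma~\ref{lem:squarefree-local}. The paper's bound for $\sum_{n}|\rho(n)-L|$ is just your union bound over $d^2$ (with $d>1$ having all prime factors $>w$), summed over the $L$ exponents $k\equiv\gamma_c\pmod{\sigma}$, so working with a single exponent loses nothing. For (i)$\Rightarrow$(ii) the paper simply cites Granville--Soundararajan, whereas you give the direct Chinese-remainder construction, which is complete.

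There is one slip to fix. The value $k_c=0$ must be excluded: then $n-2^{k_c}=n-1$ is even, not odd as you claim. Your modulus $W=2\prod_{3\leqslant p\leqslant w}p^2$ does not fix $n\bmod 4$, so $4\mid n-1$ for about half of each class, and step (c) fails there. By periodicity modulo your $L$ (which is the paper's $\sigma=\ord_{W^2}(2)$), you may take $k_c\in[1,L]$ instead. The paper's normalisation $1\leqslant\gamma_c\leqslant\sigma$ does exactly this.

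Keeping $w$ fixed and letting $X\to\infty$ before $w\to\infty$ yields only $o(X)$ exceptions, but that is all the theorem asserts. The paper's choice $w=\log\log X$ aims at the explicit rate $O(X/\log\log X)$. That rate requires $L=\lfloor\log_2T/\sigma\rfloor\geqslant1$, i.e.\ $\ord_{W^2}(2)\leqslant\log_2T$, which has to be checked against the growth of $w$. Your fixed-$w$ version never needs such a check.
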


Before proving Theorem~\ref{thm:squarefree-covering}, we need the following lemma.

\begin{lemma}\label{lem:squarefree-local}
Assume condition (ii) of Theorem~\ref{thm:squarefree-covering}.
Let $W$ be as in \eqref{eq:W-product} and put $\sigma=\ord_{W^2}(2)$.
Then, for every $c\pmod{W^2}$, there exists an integer
$1\leqslant\gamma_c\leqslant\sigma$ such that
$p^2\nmid c-2^{\gamma_c}$ for every prime $p\mid W$.
\end{lemma}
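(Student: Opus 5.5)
We argue by contraposition: if some class $c\pmod{W^2}$ admits no such $\gamma_c$, we build a covering system of the type forbidden in condition (ii). Since $W$ is odd, $2$ is invertible modulo $W^2$, so $\sigma=\ord_{W^2}(2)$ is well defined. Moreover $\omega(p)=\ord_{p^2}(2)$ divides $\sigma$ for every prime $p\mid W$. Hence, for each such $p$, the condition $p^2\mid c-2^{\gamma}$ depends only on $\gamma\pmod{\omega(p)}$. In particular, the residues $\gamma\in\{1,\ldots,\sigma\}$ represent every class modulo $\sigma$, and therefore every class modulo each $\omega(p)$.

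Fix $p\mid W$. The solutions $\gamma\in\Z$ of $2^\gamma\equiv c\pmod{p^2}$ form either the empty set or a single residue class $a_p\pmod{\omega(p)}$. This follows because $\gamma\mapsto 2^\gamma\bmod p^2$ is periodic with exact period $\omega(p)$ and injective on a period. Let $\mathcal P_c$ be the set of primes $p\mid W$ for which a solution exists.

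Now suppose that no $\gamma_c$ exists. Then every $\gamma\in\{1,\ldots,\sigma\}$ satisfies $p^2\mid c-2^\gamma$ for some $p\mid W$, and such a $p$ lies in $\mathcal P_c$ with $\gamma\equiv a_p\pmod{\omega(p)}$. Take any integer $n$ and let $\gamma\in\{1,\ldots,\sigma\}$ satisfy $\gamma\equiv n\pmod\sigma$. Since $\omega(p)\mid\sigma$, we get $n\equiv\gamma\equiv a_p\pmod{\omega(p)}$ for the corresponding $p$. Hence the classes $a_p\pmod{\omega(p)}$, $p\in\mathcal P_c$, cover $\Z$.

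This family is nonempty, and its primes are pairwise distinct odd primes, because $W$ is the squarefree product of the odd primes up to $w$. It is therefore a covering of the kind excluded by (ii), a contradiction. So $\gamma_c$ exists for every $c$.

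The only delicate point is that the moduli in (ii) must be exactly the $\omega(p_j)$ and the primes must be distinct. Both hold automatically here, since each prime of $W$ contributes at most one residue class. No analytic input is needed; the argument is purely combinatorial via the Chinese-remainder structure.
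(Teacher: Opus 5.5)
Your proposal is correct and follows essentially the same argument as the paper: assuming no $\gamma_c$ exists, each $\gamma\in\{1,\ldots,\sigma\}$ lies in the unique class modulo $\omega(p)$ solving $2^\gamma\equiv c\pmod{p^2}$ for some prime $p\mid W$, and $\omega(p)\mid\sigma$ turns this into a covering of $\Z$ by classes with distinct prime moduli, contradicting (ii). The only cosmetic difference is that you take all primes in $\mathcal P_c$ rather than only those actually hit, which is harmless since extra classes do not spoil a covering.
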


\begin{proof}
Fix $c\pmod{W^2}$ and suppose that no such $\gamma_c$ exists.
Then, for each $1\leqslant\gamma\leqslant\sigma$, there is a prime
$q_\gamma\mid W$ such that $q_\gamma^2\mid c-2^\gamma$.
For each prime $q$ arising in this way, choose the unique integer
$1\leqslant b_q\leqslant\omega(q)$ such that
$c\equiv2^{b_q}\pmod{q^2}$.
It follows that $\gamma\equiv b_{q_\gamma}\pmod{\omega(q_\gamma)}$.

Thus the residue classes $b_{q_\gamma}\pmod{\omega(q_\gamma)}$,
$1\leqslant\gamma\leqslant\sigma$, cover all integers from $1$ to $\sigma$.
Since $\omega(q_\gamma)\mid\sigma$ for every $\gamma$, they cover
$\mathbb{Z}$ by periodicity.
The residue $b_q$ depends only on $q$, so deleting repetitions gives a
covering system $a_j\pmod{\omega(p_j)}$, $1\leqslant j\leqslant m$,
with pairwise distinct odd primes $p_1,\ldots,p_m$.
This contradicts condition (ii).
\end{proof}

\begin{proof}[Proof of Theorem~\ref{thm:squarefree-covering}]
Granville and Soundararajan \cite[Theorem 5]{GS} have proved that (i) implies (ii).
It remains to prove that (ii) implies (i). Assume (ii).
Let 
\begin{align}\label{choosew}w=\log\log X.\end{align}
Let $W$ be as in \eqref{eq:W-product} and put $\sigma=\ord_{W^2}(2)$.
Fix $c\pmod{W^2}$ and choose $\gamma=\gamma_c$ as in
Lemma~\ref{lem:squarefree-local}.
Let $X$ be sufficiently large.

Set $$T=\lfloor\frac{X}{\log X}\rfloor.$$
Now let 
$$\mathcal{X}=\{T< n\leqslant X:\ n\equiv c\pmod{W^2},\ n\equiv 1\pmod{2}\}$$
and
$$\mathcal{L}=\{1\leqslant k\leqslant \sigma L:\ k\equiv \gamma\pmod{\sigma}\},$$
where 
$$L=\lfloor \frac{\log_2 T}{\sigma}\rfloor.$$
Note that $2^k\leqslant T\leqslant n$ for all $k\in \mathcal{L}$ and $n\in \mathcal{X}$.

Let 
$$\rho(n)=\#\{ k\in \mathcal{L}:\ \mu^2(n-2^k)=1 \}.$$
We first prove
\begin{align}\sum_{n\in \mathcal{X}}|\rho(n)-L|\ll \frac{1}{w}\cdot\frac{XL}{W^2}+L\sqrt{X}.\label{aim}
\end{align}
We deduce that 
\begin{align*}\rho(n)=\sum_{k\in \mathcal{L}}\mu^2(n-2^k)=\sum_{d<\sqrt{X}}\mu(d)\sum_{\substack{k\in \mathcal{L} \\ 2^k\equiv n\pmod{d^2}}}1.
\end{align*}

Note that $2^k\equiv n\pmod{d^2}$ implies that $d$ is odd. We first consider the case that there exists a prime $p\mid W$ such that $p\mid d$.
According to the choose of $c$ and $\gamma$, one has $2^k-n\equiv 2^{\gamma}-c\not\equiv 0\pmod{p^2}$. Thus 
$$\sum_{\substack{k\in \mathcal{L} \\ 2^k\equiv n\pmod{d^2}}}1=0$$
for $(d,2W)>1$. The contribution from $d=1$ is $L$.
Now we obtain
\begin{align*}\rho(n)=\sum_{k\in \mathcal{L}}\mu^2(n-2^k)=L+\sum_{\substack{1<d<\sqrt{X} \\ (d,2W)=1}}\mu(d)\sum_{\substack{k\in \mathcal{L} \\ 2^k\equiv n\pmod{d^2}}}1.
\end{align*}
Then we have
\begin{align*}\sum_{n\in \mathcal{X}}|\rho(n)-L|=
&\,\sum_{n\in \mathcal{X}}\Big|\sum_{\substack{1<d<\sqrt{X} \\ (d,2W)=1}}\mu(d)\sum_{\substack{k\in \mathcal{L} \\ 2^k\equiv n\pmod{d^2}}}1\Big|
\\ \leqslant
&\, \sum_{\substack{1<d<\sqrt{X} \\ (d,2W)=1}}\mu^2(d)\sum_{\substack{k\in \mathcal{L}}}\sum_{\substack{n\in \mathcal{X} \\ n\equiv 2^k\pmod{d^2}}}1.
\\ \ll &\, \sum_{\substack{1<d<\sqrt{X} \\ (d,2W)=1}}\mu^2(d)\sum_{\substack{k\in \mathcal{L}}}\Big(\frac{X}{2d^2W^2}+O(1)\Big)
\\ \ll &\,  \frac{1}{w}\cdot\frac{XL}{W^2}+O(L\sqrt{X}).
\end{align*}
This proves \eqref{aim}.

Let $\mathcal{E}(T,X;c,W^2)$ be the set consisting of odd $n\in [T,X]$ satisfying $n\equiv c\pmod{W^2}$ and that $n$ cannot be written as the sum of a squarefree number and a power of $2$.  Let $\mathcal{E}(X)$ be the set consisting of odd $n\in [1,X]$ satisfying that $n$ cannot be written as the sum of a squarefree number and a power of $2$. Then we have
\begin{align}\label{eq:exceprelation}
\#\mathcal{E}(X)\leqslant T+W^2\,\#\mathcal{E}(T,X;c,W^2).\end{align}
For $n\in \mathcal{E}(T,X;c,W^2)$, one has $\rho(n)=0$ and thus
$$L\,\#\mathcal{E}(T,X;c,W^2) \leqslant \sum_{n\in \mathcal{X}}|\rho(n)-L|.$$
Then by \eqref{aim}, we obtain
\begin{align}\label{eq:forexceprelation}
W^2\,\#\mathcal{E}(T,X;c,W^2)\ll\frac{X}{w}+W^2\sqrt{X}.\end{align}
Recalling \eqref{eq:W-product} and \eqref{choosew}, one has $W\ll (\log X)^2$. Then by \eqref{eq:exceprelation} and \eqref{eq:forexceprelation}, we conclude that
$$\#\mathcal{E}(X)\ll \frac{X}{\log\log X}.$$
This completes the proof of Theorem~\ref{thm:squarefree-covering}.
\end{proof}

\section*{Acknowledgments}
This work is supported by the National Key Research and Development Program of China (Grant No. 2021YFA1000700) and the National Natural Science Foundation of China (Grant No. 12471088).

\end{document}